\documentclass[english,a4paper,11pt]{amsart}
\RequirePackage[T1]{fontenc}
\RequirePackage{amsthm,amsmath}
\usepackage{amsthm,amsmath}
\usepackage{hyperref}
\usepackage[a4paper]{geometry}
\usepackage{cite}
\allowdisplaybreaks[4]
\usepackage{orcidlink}
\theoremstyle{plain}
\newtheorem{theorem}{Theorem}[section]
\newtheorem{lemma}[theorem]{Lemma}
\newtheorem{corollary}[theorem]{Corollary}
\newtheorem{proposition}[theorem]{Proposition}

\newtheorem{claim}[theorem]{Claim}

\theoremstyle{definition}
\newtheorem{definition}[theorem]{Definition}

\newtheorem{hypo}[theorem]{Hypothesis}

\theoremstyle{remark}
\newtheorem{remark}[theorem]{Remark}

\renewcommand{\emph}[1]{{\sl #1}}
\numberwithin{equation}{section}

\newcounter{lil1}
\newenvironment{step}
{\begin{list} { \bf Step (\Roman{lil1})}
		{ \usecounter{lil1}
			\setlength{\leftmargin}{0.0cm}
			\setlength{\topsep}{0.2cm}
			\setlength{\itemsep}{0.0cm}
			\setlength{\parsep}{0.1cm}
			\setlength{\itemindent}{0.8cm}
			\setlength{\parskip}{0.0cm}}}
	{\end{list}}

\newcounter{lil33}

\newcounter{lil1q}

\newcommand{\MA}{\mathfrak{A}}
\newcommand{\wh}{\widehat}

\usepackage{dsfont}
\usepackage{stmaryrd}
\usepackage{color}

\newcommand{\wi}[1]{{\widetilde{#1}}}

\newcommand{\baray}{\begin{array}{rcl}}
	\newcommand{\earay}{\end{array}}
\newcommand{\barray}{\begin{array}{rcl}}
	\newcommand{\earray}{\end{array}}

\newcommand\dela[1]{}

\newcommand{\bcase}{\begin{cases}}
	\newcommand{\ecase}{\end{cases}}

\def\wi{\widetilde}

\newcommand{\DeltaA}{A}
\newcommand\del[1]{}

\del{

	\newcommand\del[1]{}
	}

\newcommand{\Law}{\mbox{Law}}

\newcommand{\lk}{\left}
\newcommand{\lqq}{\lefteqn}
\newcommand{\rk}{\right}
\newcommand{\la}{\langle}
\newcommand{\ra}{\rangle}

\newcommand{\LL}{{\rm I \kern -0.2em L}}

\newcommand{\ep} {\varepsilon }

\newcommand{\be} {\begin{enumerate} }
	\newcommand{\ee} {\end{enumerate} }

\newcommand{\CO}{{{ \mathcal O }}}

\newcommand{\CA}{{{ \mathcal A }}}

\newcommand{\CG}{{{ \mathcal G }}}

\newcommand{\CB}{{{ \mathcal B }}}

\newcommand{\CF}{{{ \mathcal F }}}

\newcommand{\CN}{{{ \mathcal N }}}

\newcommand{\RR}{{\mathbb{R}}}

\newcommand{\NN}{\mathbb{N}}

\newcommand{\PP}{{\mathbb{P}}}

\newcommand{\EE}{ \mathbb{E} }

\newcommand{\DEQS}{\begin{eqnarray*} }
	\newcommand{\EEQS}{\end{eqnarray*} }
\newcommand{\DEQSZ}{\begin{eqnarray} }
	\newcommand{\EEQSZ}{\end{eqnarray} }
\newcommand{\DEQ}{\begin{eqnarray}}
	\newcommand{\EEQ}{\end{eqnarray}}

\newcommand{\Fcal} {{\mathcal F}}
\newcommand{\Gcal} {{\mathcal G}}

\newcommand{\Lcal} {{\mathcal L}}
\newcommand{\Mcal} {{\mathcal M}}

\newcommand{\Ocal} {{\mathcal O}}

\newcommand{\Vcal} {{\mathcal V}}

\newcommand{\Xcal} {{\mathcal X}}

\newcommand{\Afrak} {{\mathfrak A}}

\newcommand{\R}{\mathbb{R}}
\newcommand{\N}{\mathbb{N}}

\renewcommand{\P}{\mathbb{P}}
\newcommand{\Eb}{\mathbb{E}}

\newcommand{\X}{\mathbb{X}}
\newcommand{\D}{\mathbb{D}}
\renewcommand{\emph}[1]{{\sl #1}}

\usepackage{amsmath}
\usepackage{amssymb, color}
\usepackage{latexsym}

\usepackage{ulem}
\usepackage{mathrsfs}

\usepackage{enumerate}

\usepackage{cancel}

\usepackage[T1]{fontenc}
\usepackage{lipsum}
\usepackage{amsfonts}
\usepackage{graphicx}
\usepackage{epstopdf}
\usepackage{algorithmic}
\usepackage{bbm}

\usepackage{todonotes}
\usepackage{url}

\definecolor{amethyst}{rgb}{0.6, 0.4, 0.8}
\newcommand{\Pro}{Pro}

\AtBeginDocument{  }

\usepackage{graphicx}
\usepackage{tikz}
\usepackage{atbegshi}

\AtBeginShipoutFirst{%
    \begin{tikzpicture}[remember picture, overlay]
        \node[anchor=north west, xshift=1in, yshift=-1.5cm] at (current page.north west) {
            \includegraphics[width=2cm]{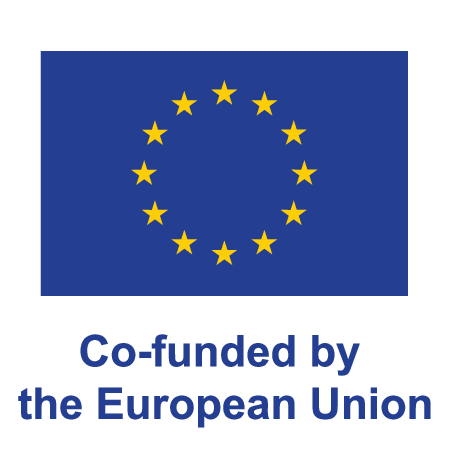} 
        };
    \end{tikzpicture}%
}

\begin{document}

	\title[Stochastic Schauder-Tychonoff type theorem]{A stochastic Schauder-Tychonoff type theorem and its applications}
	\author[E. Hausenblas]{Erika Hausenblas}
	\address{Montanuniversit\"{a}t Leoben\\
		Department Mathematik und Informationstechnologie\\
		Franz Josef Stra{\ss}e 18\\
		8700 Leoben\\
		Austria}
	\email{erika.hausenblas@unileoben.ac.at}

		\author[A. Kumar]{Ankit Kumar$^\ast$}
	\address{Montanuniversit\"{a}t Leoben\\
		Department Mathematik und Informationstechnologie\\
		Franz Josef Stra{\ss}e 18\\
		8700 Leoben\\
		Austria}
	\email{ankit.kumar@unileoben.ac.at, ankitkumar.2608@gmail.com}

		\author[J. M. T\"olle]{Jonas M. T\"{o}lle}
	\address{Aalto University\\
		Department of Mathematics and Systems Analysis\\
		PO Box 11100 (Otakaari 1, Espoo)\\
		00076 Aalto\\
		Finland}
	\email{jonas.tolle@aalto.fi}
	\date{\today}
	\begin{abstract}
		One standard way to prove existence for deterministic, highly nonlinear PDEs is to use the Schauder–Tychonoff fixed-point theorem. In what follows, we introduce and verify a stochastic variant of the Schauder–Tychonoff theorem.  We apply our existence result to nonlinear stochastic diffusion equations with non-Lipschitz perturbations.
	\end{abstract}
	
	\keywords{Stochastic Schauder-Tychonoff-type theorem, pattern formation in ecology, nonlinear stochastic partial differential equation, flows in porous media, pathwise uniqueness, multiplicative Wiener noise, nonlinear gradient noise.}
	\subjclass[2020]{Primary 35K57, 60H15; Secondary 37N25, 47H10, 76S05, 92C15}

	\thanks{E.H. gratefully acknowledges the support by the Austrian Science Fund, Project
		number: P34681. A.K. acknowledges partial funding by the Austrian Science Fund (FWF), Project
		Number: P34681, P32295 and \href{https://www.fwf.ac.at/forschungsradar/10.55776/ESP4373225}{10.55776/ESP4373225}.
		The research of J.M.T. was partially supported by the European Union's Horizon Europe research and innovation programme under the Marie Sk\l{}odowska-Curie Actions Staff Exchanges (Grant agreement no.~101183168 -- LiBERA, Call: HORIZON-MSCA-2023-SE-01).\\
		This work is licensed under the Creative Commons Attribution 4.0 International License. To view a copy of this license, visit \url{http://creativecommons.org/licenses/by/4.0/} or send a letter to Creative Commons, PO Box 1866, Mountain View, CA 94042, USA.  \includegraphics[height=1em]{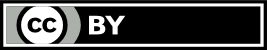}
Original work to appear in Analysis and Mathematical Physics (2026) \url{https://doi.org/10.1007/s13324-026-01262-y}.\\$^\ast$Corresponding author.}

	\maketitle
	
	{\small \tableofcontents}

	\section{Introduction}
	
	Nonlinear stochastic coupled systems arise naturally in 
	a broad spectrum of applications in biology, chemistry, physics, and engineering; for more details, we refer to \cite{FKEHMH,EHMAHTAT,EHBJM2,EHBJMPAR,EHDMTT2,EHDMTT1,EHJMT,DMEHAZ}, and references therein. 
	In mathematical biology, these systems describe phenomena such as chemotaxis, 
	population dynamics with cross-diffusion, intracellular signalling, or tissue 
	growth under random fluctuations, see  \cite{MBFHAJ,boundednessbyentropymethod,HausenblasPanda1,EHBJMPAR,EHBJM2,levin2,segel}. In biochemistry \cite{DebusscheHoegeleImkeller2013,Dillon,kepper,FKEHMH,grayscott,klaus1,klausmeier,Maini,murray1,murray2,Perthame,Sherratt:2005,Sherratt:2010,Sherratt:2011,sherratt,skt,turing,nadia,vanderStelt:2012gs,Woolley}, reaction--diffusion networks 
	with stochastic forcing are used to model molecular transport, enzyme kinetics, 
	and pattern formation at low copy numbers. Building on the recent work \cite{AAMV1}, which established global well-posedness for reaction--diffusion equations with transport noise and critical superlinear diffusion using stochastic maximal regularity, as well as the work \cite{AAMV3}, which introduces a new local Lipschitz condition for the critical variational framework of SPDEs, Agresti and Veraar further expanded on these developments in \cite{AAMV2} with a comprehensive survey of nonlinear SPDEs and maximal regularity.
	
	In fluid mechanics and porous media 
	theory, stochastic perturbations appear in models for transport in random 
	environments, flow through heterogeneous materials, or chemically active 
	fluids, see \cite{DGG2020,Gess2017,DMEHAZ,vazquez2007porous}. Related classes of equations also occur in neuroscience (FitzHugh--Nagumo and Hodgkin--Huxley models), epidemiology, climate modelling, 
	and in the study of soft matter and active particle systems, see \cite{RF,AGHM,GrayScott_original,ALH,EFK,JNSA,YY} for more details. These examples 
	highlight the fundamental role of nonlinear stochastic parabolic systems as 
	accurate and versatile models for complex multiscale processes influenced by 
	randomness. There is a huge literature on the solutions of related equations, for instance see \cite{BDPR3,BDPR2016,BM2013,Cao,Capinski1993,HE,DGG2020,evans2012introduction,FehrmanGess2021,Flandoli,franco,Gess2017,kotelenez,TessitoreZabczyk1998}, and references therein. 
	
	There are several different kinds of solutions for stochastic systems, including classical solutions \cite{GDPMR}, mild solutions \cite{JBW}, Dirichlet solutions \cite{SAMR}, variational solutions \cite{GDPMR}, or solutions via regularity structures and rough paths \cite{MH2014}, \cite{JLL,MGCPLL,ethier} built on a given probability space, and stochastically weak solutions \cite{MV}. 
	Our main focus is to establish the existence of a solution defined in the sense of Definition \ref{def2.2}.

	We consider a class of nonlinear, possibly strongly coupled, stochastic parabolic 
	systems of the form
	\begin{equation}\label{e:SPDE}
		\begin{cases}
			du(t) +A_1(u(t),v(t))\,dt 
			= G_1(u(t),v(t))\,dt + \sigma_1(u(t),v(t))\,dW_1(t), \\[6pt]
			dv(t) + A_2(u(t),v(t))\,dt 
			= G_2(u(t),v(t))\,dt + \sigma_2(u(t),v(t))\,dW_2(t),
		\end{cases}
	\end{equation} for $t\in(0,T)$,
	subject to suitable initial and boundary conditions.
	Here
	\begin{itemize}
		\item $u$ and $v$ are unknown processes taking values in separable Hilbert spaces 
		$H_1$ and $H_2$, respectively;
		\item $A_1:D(A_1)\subset V_1 \to V_1^\ast$ and 
		$A_2:D(A_2)\subset V_2 \to V_2^\ast$ are (possibly nonlinear) 
		operators defined on Gelfand triplets
		\[
		V_1 \hookrightarrow H_1 \equiv H_1^\ast \hookrightarrow V_1^\ast,
		\qquad
		V_2 \hookrightarrow H_2 \equiv H_2^\ast \hookrightarrow V_2^\ast;
		\]
		\item $G_1:V_1\times V_2 \to V_1^\ast$ and 
		$G_2:V_1\times V_2 \to V_2^\ast$ denote lower-order (possibly nonlinear) 
		drift terms;
		\item $\sigma_1:V_1\times V_2\to \mathcal{L}_2(U_1,H_1)$ and 
		$\sigma_2:V_1\times V_2 \to \mathcal{L}_2(U_2,H_2)$ are noise coefficients 
		with values in the spaces of Hilbert--Schmidt operators from separable 
		Hilbert spaces $U_1$ and $U_2$ into $H_1$ and $H_2$, respectively;
		\item $\{W_1(t)\}_{t\in[0,T]}$ and $\{W_2(t)\}_{t\in[0,T]}$ are cylindrical Wiener processes 
		on $U_1$ and $U_2$, respectively, defined on a filtered probability space 
		$\big(\Omega,\mathcal{F},\{\mathcal{F}_t\}_{t\in[0,T]},\mathbb{P}\big)$ satisfying the 
		usual conditions.
	\end{itemize}

When passing from deterministic to stochastic partial differential equations (SPDEs), one quickly encounters a number of difficulties that are not present in the 
classical setting. Fundamental identities that hold deterministically may fail in the stochastic framework, for instance, expectations do not in general 
factorise, so that $\mathbb{E}[uv] \neq \mathbb{E}[u]\mathbb{E}[v]$, and 
integrability assumptions such as $\mathbb{E}\big[\|v\|\big] < \infty$ do not automatically 
imply higher moments. Moreover, many structural tools commonly used in 
deterministic analysis such as local monotonicity methods or maximum principles may 
no longer apply in the presence of randomness. Uniform a priori bounds, while 
still obtainable in many situations, often require more delicate arguments, and 
non-Lipschitz nonlinearities typically need to be handled via suitable 
interpolation or cut-off procedures.

Despite these additional challenges, a general strategy for establishing the 
existence of probabilistic weak solutions has emerged and is now widely used (see \cite{CP1997,weiroeckner}). 
One usually begins by constructing a sequence of approximate solutions, either 
through a Galerkin scheme or a time-discretisation method. These approximations 
must satisfy uniform bounds that are strong enough to guarantee the tightness of the 
induced probability laws. Once tightness is established, the Skorokhod 
representation theorem plays a central role: it provides a new probability space 
on which a subsequence of the approximations converges almost surely, with 
respect to appropriately ``reconstructed’’ Wiener processes. The final and often 
most delicate step consists of analysing the limiting objects and showing that 
the limit indeed satisfies the original SPDE in the desired weak or 
martingale sense. For this particular part, we combine a standard identification procedure with the approach given in  \cite{weiroeckner}.

In the present work, we take a different route. Instead of relying on a 
discretisation-based compactness argument, we formulate the existence theory as a 
fixed-point problem on the level of probability laws. By applying the 
Schauder--Tychonoff theorem directly to the laws of appropriately parametrised 
solution candidates, we obtain a modular and conceptually transparent framework that is independent of the particular approximation scheme. This approach 
isolates the essential probabilistic and topological ingredients and clarifies 
the exact analytical requirements needed for the fixed-point argument to close. 
As a consequence, the method applies to a broad class of nonlinear stochastic 
systems of type~\eqref{e:SPDE} without the need to verify discretisation-specific 
properties, thereby simplifying the existence theory while retaining full 
generality.

The article is structured as follows. In Section~\ref{schauder} we present the main result, and its proof is given in Section~\ref{sec:proof_of_schauder}.
To demonstrate its applicability, we present in Section  \ref{sec:app}  several examples for which we can establish the existence of probabilistic weak solutions.	As a first illustration, we consider an SPDE with a nonlinear Nemytskii operator (see \cite{runst}) as the drift. To be more precise,  in Subsection \ref{Example1} we verify the assumption of our main theorem for the following SPDEs
\begin{equation*}
	\left\{
	\begin{aligned}
		d u(t)&= \big(\Delta u(t)+u^{[\frac{1}{2}]}(t)\big)\,d t+\Sigma (u(t))\,dW(t), \quad \text{in } (0,T)\times \Ocal,\\
		u(0)&=u_0,\ \text{in }\ \Ocal,
	\end{aligned}
	\right.
\end{equation*}where $u^{[\gamma]}:=|u|^{\gamma-1}u$ with $\gamma \in (0,\infty)$, and $\mathcal{O}=(0,1)$. 

Next, in Subsection~\ref{Example2} we show that the following porous-media type model fits into the framework of Theorem~\ref{ther_main}:

\begin{equation*}
	\left\{
	\begin{aligned}
		du(t)&= \big(\Delta u^{[\gamma]}(t)+u^{[\frac{1}{2}]}(t)\big)\,d t+ \Sigma(u(t))\,dW(t),\quad \text{in } (0,T)\times \Ocal,\\
		u(0)&=u_0,\ \text{in }\ \Ocal, 
	\end{aligned}
	\right.
\end{equation*}
where \(u^{[\gamma]}:=|u|^{\gamma-1}u\) with \(\gamma\in(0,\infty)\), and \(\Ocal=(0,1)\).
Finally,  in Subsection~\ref{Example3} 
 we consider the case in which the nonlinearity appears in front of the noise:
\begin{equation*} 
	\left\{
	\begin{aligned}
		du(t)&= \Delta u^{[\gamma]}(t)\,d t+ \nabla u^{[\frac{1}{2}]}(t)\,dW(t),\quad \text{in } (0,T)\times \Ocal,\\
		u(0)&=u_0,\quad \text{in } \Ocal, 
	\end{aligned}
	\right.
\end{equation*}
where again \(u^{[\gamma]}:=|u|^{\gamma-1}u\) with \(\gamma\in(0,\infty)\), and \(\Ocal=(0,1)\).
Here the stochastic integral is understood in the appropriate Hilbert-space sense, and the coefficient in front of the noise is nonlinear in \(u\). {One should note that $\mathbb{X}$ is a generic Banach space, which may change from one example to another.}
In the appendix, we collect several auxiliary results that are essential for the proof of the main theorem. In particular, we define extensions of probability spaces in Appendix~\ref{extension}, introduce the shifted Haar projection in Appendix~\ref{app:haar-system}, and prove the infinite-dimensional L\'evy–Ciesielski theorem for the Brownian motion in Appendix~\ref{LC:sec}.

	\section{The stochastic Schauder-Tychonoff type theorem}\label{schauder}

		Let us fix some notation in the sense of \cite[Chapter 5]{weiroeckner}. 
	Let $\Afrak=(\Omega,\Fcal,\mathbb{F},\P)$ be a filtered probability space
	with filtration $\mathbb{F}=\{\Fcal_{t}\}_{t\in [0,T]}$ satisfying the usual conditions.
	Let us consider a Gelfand triple $V\subset H\subset V^*$, where $H$ denotes a separable Hilbert space, and $V$ denotes a reflexive Banach space, and $V^*$ represents the topological dual of $V$. For a given $m$\footnote{We will see later which $m$ to choose.}, we fix $\X=L^m(0,T;V)$. Let $\X_1\subset \X$ be a reflexive Banach space embedded compactly and densely in $\X$.
	Now, we define
		\begin{equation*}
		\begin{aligned}  \Mcal_{\Afrak}^{m}(\X)
			:= & \Big\{ \xi:\Omega\times[0,T]\to V\;\colon\;\\
			&\qquad\text{\ensuremath{\xi} is \ensuremath{\mathbb{F}}-progressively measurable}\;\text{and}\;\EE\big[\|\xi\|_{\X}^{m}\big]<\infty\Big\},
		\end{aligned}
	\end{equation*}
	equipped with the norm
	\[
	\|\xi\|_{\Mcal_{\Afrak}^{m}(\X)}:=\Big\{\EE\big[\|\xi\|_{\X}^{m}\big]\Big\}^{1/m},\quad\xi\in\Mcal_{\Afrak}^{m}(\X).
	\]
	
	Let $U$ be another separable Hilbert space and $\{W(t)\}_{t\in [0,T]}$ be a Wiener process\footnote{That is, a $Q$-Wiener process, see e.g. \cite{DaPrZa:2nd} for this notion.} in $U$ with a linear, nonnegative definite, symmetric trace class covariance
	operator $Q:U\to U$ such that $W$ has the representation
	$$
	W(t)=\sum_{i\in\mathbb{I}} Q^\frac 12 \psi_i\beta_i(t),\quad t\in [0,T],
	$$
	where $\{\psi_i:i\in \mathbb{I}\}$ is a complete orthonormal system in $U$, $\mathbb{I}$ a suitably chosen countable index set, and $\{\beta_i:i\in\mathbb{I}\}$ a family of independent real-valued standard Brownian motions on $[0,T]$ modelled in given probability space $\Afrak$.

Our point of interest is a system of the following form
		\begin{align} \label{spdes_org}
		dw(t) =&\big[{A} (w(t))+ {F(w(t))}\big]\, dt +\Sigma(w(t))\,dW(t),\quad w(0)=w_0\in L^m(\Omega;H),
	\end{align}
	where $A$ is a possible nonlinear operator and $F$ a non-linear function.  There are several ways of showing the existence of a solution to the equation \eqref{spdes_org}. However, we are interested in a way to do it as a fixed-point method using a Schauder-Tychonoff-type theorem based on compactness.

	Before turning to the main result of this article, we first clarify the notion of 
	a solution that will be used throughout. As explained above, our approach relies 
	on compactness arguments formulated on the level of probability measures. In this 
	framework, one no longer has direct access to the original probability space or 
	to the driving Wiener processes. Consequently, the limiting object obtained via 
	compactness cannot be interpreted as a solution in the classical (probabilistic 
	strong) sense. Instead, one must reconstruct a probability space on which both 
	the solution and the corresponding Wiener processes are defined.
	
	\renewcommand{\emph}[1]{{\sl #1}}
	
	For this reason, we begin by recalling what we mean by a \emph{probabilistic 
		strong solution}. After that, we introduce the notion of a 
	\emph{probabilistic weak solution}, which is the appropriate concept for the 
	compactness-based arguments used later.

	\begin{definition}[Probabilistic strong solution]
		We are given a stochastic basis $\Afrak:=\big(\Omega,\Fcal,\mathbb{F},\P\big)$ and $w_0\in L^m(\Omega;H)$. Then, the system \eqref{spdes_org} has a pathwise strong probabilistic solution if and only if there exists a progressively measurable process $w:[0,T]\times \Omega\to H$ with $\P$-a.s., paths $w(\cdot,\omega)\in \mathbb{D}([0,T];H)$, and $w$ satisfies 
		for all $t\in[0,T]$ and for any $\phi\in V$, $\P$-a.s.,
		\begin{align*}
			\big(w(t),\phi\big)&=\big(w_0,\phi\big)+\int_0^t \langle A(w(s))+ F(w(s)), \phi\rangle \,ds+\int_0^t\big(\Sigma(w(s))\, dW(s),\phi\big).
		\end{align*}
	\end{definition}
	Having introduced the notion of a probabilistic strong solution, we can now proceed to define the corresponding probabilistic weak solution.
	\begin{definition}[Probabilistic weak solution]\label{def2.2}
		A probabilistic weak solution to the problem  \eqref{spdes_org} is a system 
		\begin{align*}
			\big(\Omega,\Fcal,\mathbb{F},\P, W,w \big), 
		\end{align*}such that 
		\begin{enumerate}
			\item $\Afrak:=\big(\Omega,\Fcal,\mathbb{F},\P\big)$ is a complete filtered probability space with a filtration $\mathbb{F}=\{\Fcal_t\}_{t\geq0}$ satisfying the usual conditions;
			\item $W$ is a cylindrical Wiener process on $U$, over the probability space $\Afrak$; 			
			\item {$w : [0,T]\times \Omega \to H$ is $\mathbb{F}$-progressively measurable with
			$w(\cdot,\omega)\in \mathbb{D}([0,T];H)$ for $\P$-a.e.\ $\omega$, and satisfies
			$A(w),F(w)\in L^{1}(0,T;V^{\ast})$ and
			$\Sigma(w)\in L^{2}(0,T;\Lcal_2(U,H))$, $\P$-a.s. Moreover, for every
			$\phi\in V$ and all $t\in[0,T]$, $\P$-a.s.,
			\begin{align*}
				\big(w(t),\phi\big)
				&=\big(w_0,\phi\big)
				+\int_0^t\langle A(w(s))+F(w(s)),\phi\rangle\,ds
				+\int_0^t\big(\Sigma(w(s))\,dW(s),\phi\big).
			\end{align*}}
		\end{enumerate}
	\end{definition}
{In the infinite-dimensional setting, existence of a probabilistic weak solution
together with pathwise uniqueness yields uniqueness in law and a unique strong
solution by the Yamada--Watanabe theorem for stochastic evolution equations, see
\cite{RSZ2008,Kurtz2007}.}
	
	In this article, we start with the following stochastic partial differential equations (SPDEs):
	\begin{equation} \label{spdes1}
		\left\{
		\begin{aligned}
			dw(t) &=\big[A(w(t))+ F(w(t))\big]\, dt  +\Sigma(w(t))\,dW(t), \ t\in(0,T),\\
			w(0)&=w_0\in L^m(\Omega; H).
		\end{aligned}\right.
	\end{equation}
	For fixed $\Afrak$, $W$, {$m> 1$}, we define the operator 
	$$
	\Vcal=\Vcal_{\Afrak,W}:\Mcal_{\Afrak}^{m}(\X)\times L^m(\Omega,\Fcal_0,\P;H)\to\Mcal_{\Afrak}^{m}(\X)
	$$
	for $\xi\in\Mcal_{\MA}^m(\X)$ by
	\begin{align*}
		\Vcal(\xi):=\Vcal(\xi,w_0):=w,
	\end{align*} 
	where $w$ is the solution to the following SPDEs
	\begin{equation} \label{spdes}
		\left\{
		\begin{aligned}
				dw(t) &=\big[A(w(t))+ { \bar F_1(\xi(t),w(t))}\ +F_2(\xi (t))\big]\, dt  \\&\qquad +\big(\Sigma_1(w(t))+\Sigma_2(\xi(t))\big)\,dW(t), \ t\in(0,T),\\
				 w(0)&=w_0\in L^m(\Omega; H).
		\end{aligned}\right.
	\end{equation}
	Here we decomposed $F$ as a sum $\bar F_1+F_2$ such that $\bar F_1(w,w)=F_1(w)$, and $\Sigma$ as $\Sigma_1+\Sigma_2$, details are given in Hypothesis \ref{hyp}. This kind of decomposition allows us to cover several interesting examples in this framework.

	Here, we implicitly assume that \eqref{spdes} is well-posed and a unique strong solution (in the stochastic sense) $w\in\Mcal_{\MA}^m(\X)$ exists for $\xi\in\Mcal_{\MA}^m(\X)$.
However, this need not always be the case, and one must impose certain constraints on $\xi$.  	
Hence, 	we assume that there exists a bounded subset $\mathcal{X}_\MA$ of $\Mcal_{\MA}^m(\X)$
	such that for any $\xi\in\mathcal{X}_\MA$
	there exists  a unique strong solution  $w(\xi)$ (in the stochastic sense) to the system \eqref{spdes}. Later on, we will see that we need $w(\xi)\in \mathcal{X}_\MA$.
		
Here, it is essential to characterise the set in such a way that the definition can be 
transferred to the set of probability measures. This can be achieved as follows.  
We need to find two measurable functions 
\[
\Phi:\mathbb{X}\to\mathbb{R}, \ \text{ and } \ 
\Psi:\mathbb{X}\to \mathbb{R}\cup\{\infty\},
\]  
such that for any bounded closed interval \(I\subset \mathbb{R}\), the set 
\(\Psi^{-1}(I)\)\footnote{For a measurable function \(f:X\to\mathbb{R}\), the notation 
	\(f^{-1}(A)\) denotes the preimage \(\{x\in X : f(x)\in A\}\) for all Borel sets \(A\).} 
is closed in \(\mathbb{X}\).
	Let us now define for any   $R>0$
	a subset
	$\mathcal{X}_{\MA}(R)$ of $\mathcal{M}_\MA^m(\mathbb{X})$ by
	\DEQSZ\label{characteriseK}
	\mathcal{X}_{\MA}(R):=\Big\{ \xi\in \mathcal{M}_\MA^m(\mathbb{X}):\EE\Phi(\xi)\le R^m \mbox{ and } \PP\lk(\Psi(\xi)<\infty\rk)=1\Big\}.
	\EEQSZ

To get well-posedness of the system \eqref{spdes} for all $\xi\in 	\mathcal{X}_{\MA}(R)$, eventually the well-posedness to \eqref{spdes1} after a fixed-point arguments, we impose some conditions on $A$, $\Sigma_1$ and $\Sigma_2$ (see (H.1), and (H.2)), then on $\bar F_1$ and $F_2$ (see (H.3) below).

	\begin{hypo}\label{hyp}
	
		In order to prove our main theorem, we need several assumptions: 
		
		\medskip 
		
		\item[(H.1)]
	Let 	$A:D(A)\subset V \to V^\ast$ be a bounded, (possibly nonlinear) measurable 
		single-valued operator defined on the Gelfand triplet 
		\[
		V \hookrightarrow H \equiv H^\ast \hookrightarrow V^\ast.
		\]
	For a given $m>1$, we	assume that there exist  constants 
		\[
		 \beta\in [0,\infty), \qquad \theta\in (0,\infty), 
		\qquad C_0\in \mathbb{R},
		\]
a function $f \in L^1(0,T;\mathbb{R}^+_0)$,  and a mapping $\rho:V \to [0,\infty)$ being  measurable, hemicontinuous, and locally bounded in $V$, such that 	
		\begin{enumerate}
			\item\label{HC} \emph{Hemicontinuity}. The map $\R \ni \lambda \mapsto \langle A(w_1+\lambda w_2), w_3\rangle \in\R$ is continuous for any $w_1,w_2,w_3\in V$ and for a.e. $t\in[0,T]$;
			\item\label{WM} \emph{Local monotonicity}. For all $w_1,w_2\in V$,  we have
			\begin{align*}
			&	2\la A(w_1)-A(w_2), w_1-w_2\ra + {|\Sigma_1(w_1)-\Sigma_1(w_2)|_{\mathcal{L}_2(U,H)}^2} \\
				& \qquad\quad \leq \big(f (t)+{\rho(w_2)}\big) |w_1-w_2|_H^2,
			\end{align*}where $f$ and $\rho$ are defined above;
		\item\label{C} \emph{Coercivity}. 
		\begin{align*}
			2\la A (w),w\ra +|\Sigma_1(w)|_{\mathcal{L}_2(U,H)}^2\leq C_0|w|_H^2-\theta |w|_V^m+f (t);
		\end{align*}
	\item\label{G} \emph{Boundedness}. 
	\begin{align*}
		|A(w)|_{V^\ast}^{\frac{m}{m-1}} \leq \big(f (t)+C_0|w|_V^m \big)\big(1+|w|_H^\beta\big);
	\end{align*}
		\end{enumerate}
		
\item[(H.2)] Let $\Sigma_1,\Sigma_2:V \to \mathcal{L}_2(U, H)$ be a mapping, such that there exists a positive constant $C$ satisfying
\begin{align*}
	|\Sigma_1(w)|_{\mathcal{L}_2(U,H)}^2& \leq C(1+|w|_H^2),\\
		|\Sigma_2(\xi)|_{\mathcal{L}_2(U,H)}^2& \leq C(1+|\xi|_V^{2r}),
\end{align*}for $w,\xi\in V$ and $r\in(0,1)$. Moreover, there exists a positive constant $C$ and $\kappa\in (0,1)$ such that 
\begin{align*}
	|\Sigma_2(\xi_1)-\Sigma_2(\xi_2)|_{\mathcal{L}_2(U,H)}^2\leq C |\xi_1-\xi_2|_V^{2\kappa};
\end{align*}
\item[(H.3)] 
  To make the assumptions more general, we have divided the assumptions on $F_1$ into two cases, depending on the functions $g_1$ and $g_2$.  
Note that the assumption on $F_2$ remains the same in both cases, which is given as follows:

	For $\xi_1,\xi_2\in  \mathcal{X}_\MA(R) $ there exists a constant $C>0$ and $\delta\in (0,1)$ such that
\begin{align}\label{F_2}
	| F_2(\xi_1)-F_2(\xi_2)|_{V^\ast}^m
\le C  | \xi_1-\xi_2|_{V}^{m\delta}.\end{align}

Let us move to the assumptions of $F_1$.
\begin{enumerate}
	
\item[(H.3)$_1$]
In the first case, we assume that there exist
two continuous and measurable positive functions  $g_1$ and $g_2$, two constants   $R_{g_1}\in\RR^+$ and $R_{g_2}\in\RR^+$, and {a number $q \ge\ \max\!\big\{\tfrac{1}{1-\zeta_1},\,\tfrac{1}{1-\zeta_2}\big\}$, for $\zeta_1,\zeta_2\in (0,1)$,} with
\begin{align}\label{Condition}\nonumber
	\mathcal{X}_\MA(R) \subset \bigg\{\xi \in \mathcal{M}_\MA^m(\X): &\ \EE \bigg[\int_0^Tg_1^q(\xi(s))\,ds \bigg]\leq R_{g_1},\\& \text{ and } \EE \bigg[\int_0^Tg_2^q(\xi(s))\,ds \bigg]\leq R_{g_2} \bigg\}.
\end{align}
such that for  $w,w_1, w_2\in V$ and $\xi,\xi_1,\xi_2\in \mathcal{X}_\MA(R)$, we assume the following:
\begin{enumerate}
	\item there exist constants $C>0$, $\zeta_1\in(0,1)$ and a continuous function $g_1$ such that 
	\begin{align*}
		|\bar F_1(\xi,w_1)-\bar  F_1(\xi,w_2)|_{V^\ast}^m\leq g_1(\xi)|w_1-w_2|_V^{m\zeta_1};
	\end{align*}
\item there exists a constant $C>0$, $\zeta_2\in(0,1)$ and a measurable function $g_2$ such that 
	\begin{align*}
	|\bar  F_1(\xi_1,w)-\bar  F_1(\xi_2,w)|_{V^\ast}^m\leq g_2(w)|\xi_1-\xi_2|_V^{m\zeta_2};
\end{align*}
\end{enumerate}
\item[(H.3)$_2$] In the second case, we assume that $g_1$ and $g_2$ are bounded, that is, there exists a positive constant $C$ such that 
\begin{align*}
	g_1(\xi)\leq C, \ \text{ and } \ g_2(\xi)\leq C, \text{ for all } \xi\in\Xcal_\MA(R). 
\end{align*} For $w,w_1, w_2\in V$ and $\xi,\xi_1,\xi_2\in \mathcal{X}_\MA(R)$, we assume the following:
\begin{enumerate}
	\item there exists a constant $C>0$,  such that 
	\begin{align*}
		|\bar F_1(\xi,w_1)-\bar  F_1(\xi,w_2)|_{V^\ast}^m\leq C|w_1-w_2|_V^{m};
	\end{align*}
	\item there exists a constant $C>0$, such that 
	\begin{align*}
		|\bar  F_1(\xi_1,w)-\bar  F_1(\xi_2,w)|_{V^\ast}^m\leq C|\xi_1-\xi_2|_V^{m}.
	\end{align*}
\end{enumerate}
\end{enumerate}
\end{hypo}

\begin{remark}\label{note cutoff}
	Note that, in order to control the nonlinear term $\bar F_1(\cdot,\cdot )$, one can introduce an appropriate cut-off function as in the works \cite{EHBJMPAR,EHJMT}. We are not introducing a cut-off function in our settings because doing so involves several technical details and is very specific to the application. 
\end{remark}

With the above assumptions on the operator \(A\) and the mappings \(F\) and \(\Sigma\), we are ready to formulate our main result, namely the stochastic Schauder--Tychonoff theorem. This result extends the deterministic Schauder--Tychonoff fixed-point theorem; see \cite[\S\S 6--7]{granas}.

	\begin{theorem}\label{ther_main}
		Fix $m>1$. Let $U$ be a separable Hilbert space, $Q: U\to U$ such that $Q$ is linear, symmetric, nonnegative definite, and of trace class, let $H$ be another Hilbert space, and let us assume that we have a compact and dense embedding $\mathbb{X}_1\hookrightarrow\X$. 
		Suppose that for any filtered probability space $\Afrak=(\Omega,\Fcal,\mathbb{F},\P)$
		and for any $Q$-Wiener process $W$ with values in $U$ that is modelled on $\Afrak$ the following holds.
		
		Suppose that there exist constants $R_1,\ldots,R_K>0$, $K\in\N$, continuous functions $\Phi_i:\X\to [0,\infty)$, $1\le i\le K$, measurable functions $\Psi_i:\X\to[0,\infty]$, $1\le i\le K$ with closed sublevel sets $\Psi_i^{-1}([0,\alpha])$, $\alpha\ge 0$, $1\le i\le K$, and
		a nonempty, sequentially weak$^\ast$-closed, measurable and bounded subset 
		$\Xcal_{\Afrak}(R)$ ($:=\Xcal_{\Afrak}({R_1,\ldots,R_K})$ along with \eqref{Condition}) of $\Mcal_{\MA}^m(\X)$ such that:
		\begin{enumerate}[(a)]
			\item $\mathbb{E}[\Phi_i(\xi)]\le R_i$, for every $\xi\in \Xcal_{\Afrak}(R)$ and every $1\le i\le K$;
			\item $\mathbb{P}(\{\Psi_i(\xi)<\infty\})=1$ for every $\xi\in\Xcal_{\Afrak}(R)$ and for every $1\le i\le K$.
		\end{enumerate}
	
		Define the operator 
		\begin{align*}
			\Vcal_{\Afrak,W}: \Mcal_{\MA}^m(\X) \to \Mcal_{\MA}^m(\X), \ \text{ such that } \ \xi \mapsto \Vcal_{\Afrak,W}(\xi):=w(\xi),
		\end{align*}where $w$ is a solution to the system \eqref{spdes}.

	Now, we assume that the operator $\Vcal_{\Afrak,W}$ restricted to $\Xcal_{\Afrak}(R)$ satisfies the following	properties:
		\begin{enumerate}[(i)]
			\item\label{i} the operator $\Vcal_{\Afrak,W}$ is well-defined on $\Xcal_{\Afrak}(R)$ for all choices of $R_i>0$, $1\le i\le K$;
			\item\label{ii} there exist constants $R^0_i>0$, $1\le i\le K$ such that 
			\[\Vcal_{\Afrak,W}(\Xcal_{\Afrak}(R))\subset \Xcal_{\Afrak}(R),\]
			for all $R_i\ge R^0_i$ and all $1\le i\le K$;
			\item\label{iii} for all choices of $R_i>0$, $1\le i\le K$, the restriction $\Vcal_{\Afrak,W}\big\vert_{\Xcal_{\Afrak}(R)}$ is
			uniformly continuous on bounded subsets with respect to the strong topology of $\Mcal_{\Afrak}^{m}(\X)$;
			\item\label{iv} there exist constants $R>0$, $m_0>1$ such that
			\[
			\Eb\left[\|\Vcal_{\Afrak,W}(\xi)\|_{\mathbb{X}_1}^{m_0}\right]\le R,\quad\text{for every}\quad \xi\in\Xcal_{\Afrak}(R),
			\]
			for all $R_i>0$ and all $1\le i\le K$, where $\X_1 $ is embedded compactly in $\X$; 
			\item\label{v} for all $R_i>0$ and all $1\le i\le K$, $\Vcal_{\Afrak,W}(\Xcal_{\Afrak}(R))\subset \D([0,T];H),$ $\P$-a.s.\footnote{Here, $\mathbb{D}([0,T];H)$ denotes the
				Skorokhod space of c\`adl\`ag paths in $H$ endowed with the Skorokhod $J_1$-topology, see \cite[Appendix A2]{Kallenberg}.}
		\end{enumerate}
		Then, there exists a filtered probability space $\Afrak^\ast=(\Omega^\ast,\Fcal^\ast,\mathbb{F}^\ast,\P^\ast)$
		(that satisfies the usual
		conditions)
		together with a $Q$-Wiener process $W^\ast$ modelled on $\Afrak^\ast$
		and an element $w^\ast\in\Mcal_{\Afrak^\ast}^{m}(\X)$ such that
		for all $t\in[0,T]$, $\P^\ast$-a.s.
		\[
		\Vcal_{\Afrak^\ast,W^\ast}(w^\ast)(t)=w^\ast(t)
		\]
		for any initial datum $w_0\in L^m(\Omega,\Fcal_0,\P;H)$,
		where $w^\ast_0\in L^m(\Omega^\ast,\Fcal_0^\ast,\P^\ast;H)$ satisfies $\operatorname{Law}(w^\ast_0)=\operatorname{Law}(w_0)$.
	\end{theorem}

	The proof of Theorem \ref{ther_main} is provided in the subsequent sections. We note that
	by construction, we get that $w^\ast$ solves
	\begin{align*} 
		dw^\ast(t) =&\DeltaA (w^\ast(t))\, dt +F(w^\ast(t))\,dt +\Sigma(w^\ast(t))\,dW^\ast(t),\quad w^\ast(0)=w_0,
	\end{align*} 
	on $\Afrak^\ast$. Therefore, $w^\ast$ is a probabilistic weak solution.

\section{Applications}\label{sec:app}
The main theorem (Theorem~\ref{ther_main}) stated in the previous section is applicable to a large class of stochastic partial differential equations. To illustrate its applicability, we now present several representative examples.

\subsection{The stochastic heat equation with non-Lipschitz nonlinearity}\label{Example1}
To make the use of Theorem~\ref{ther_main} transparent, we first introduce the following model:
\begin{equation}\label{firstex}
	\left\{
	\begin{aligned}
		d u(t)&= \big(\Delta u(t)+u^{[\frac{1}{2}]}(t)\big)\, d t+\Sigma (u(t))\, dW(t),\ t> 0,\\
		u(0)&=u_0.
	\end{aligned}
	\right.
\end{equation}
We point out that, in the example, we assume that $F(u)=F_2(u)=u^{[\frac{1}{2}]}$, that is, $F_1(u)\equiv 0$ and $\Sigma=\Sigma_1$, that is $\Sigma_2\equiv 0$, in the settings of Theorem \ref{ther_main}, where \(u^{[\gamma]}:=|u|^{\gamma-1}u\) with \(\gamma\in(0,\infty)\), and \(\Ocal=(0,1)\).
Our aim is to establish the existence of a weak solution by applying Theorem~\ref{ther_main}. To make the argument transparent, we first specify the precise functional setting and verify that the equation at hand satisfies the structural assumptions of Theorem~\ref{ther_main}. 
 Let us fix $V=H_2^1(\Ocal)$ and $H=L^2(\Ocal)$ and consider the Gelfand triple $V \subset H \subset V^\ast $, where $V^\ast=H_2^{-1}(\Ocal)$. Set $\X:=L^2(0,T;H)$. For $t\in[0,T]$, the operator $\Sigma(t,\cdot)$ is defined from $H_2^{1}(\CO)$ to $\mathcal{L}_2(L^2(\CO),L^2(\CO))$.  Now, we can state our claim:

 \begin{corollary}\label{Existenceheat}
	Let $Q$ be a  linear, nonnegative definite, symmetric trace class covariance
operator $Q:L^2(\CO)\to L^2(\CO) $ and  $H_2^1(\Ocal) \subset L^2(\Ocal) \subset H_2^{-1}(\Ocal) $ be the Gelfand triple  defined above with the initial condition $u_0\in L^2(\Omega; L^2(\Ocal))$.

Then, there exists a probabilistic weak solution $u$, to be more precise, there exists a triplet
$$
(\mathfrak{A},  W,u),
$$
where 
\begin{itemize}
	\item $\Afrak=(\Omega,\Fcal,\mathbb{F},\P)$ is a filtered probability space
	with filtration $\mathbb{F}=\{\Fcal_{t}\}_{t\in [0,T]}$ satisfying the usual conditions;
	\item $W$ is a $L^2$-valued Wiener process over $\mathfrak{A}$ with covariance $Q$;
	\item and $u$ is a strong solution of \eqref{firstex} over $\mathfrak{A}$ such that $u$ is c\`adl\`ag in $L^2(\Ocal)$ progressively measurable in $H_2^{1}(\CO)$ and
	$$
 \EE\bigg[\sup_{t\in[0,T]}|u(t)|_{L^2}^2\bigg] +\EE \Big[\int_0^T|u(t)|_{H_2^1}^2d t\Big]<\infty.
	$$
\end{itemize}
 \end{corollary}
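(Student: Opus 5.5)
The plan is to apply Theorem~\ref{ther_main} with $A=\Delta$, $\bar F_1\equiv 0$, $F_2(\xi)=\xi^{[\frac12]}$, $\Sigma_1=\Sigma$ and $\Sigma_2\equiv 0$, taking $m=2$, $V=H^1_2(\Ocal)$, $H=L^2(\Ocal)$ and $\X=L^2(0,T;H)$. For a frozen $\xi$, the map $\Vcal(\xi)=w$ solves the linear-in-$w$ equation
\[
dw=\big(\Delta w+\xi^{[\frac12]}\big)\,dt+\Sigma(w)\,dW .
\]
We assume that $\Sigma$ is Lipschitz from $H$ to $\mathcal{L}_2(L^2,L^2)$ with linear growth, as in (H.2).

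First we check (H.1)--(H.3). Hemicontinuity, monotonicity with $\rho\equiv0$, coercivity $2\langle\Delta w,w\rangle=-2|\nabla w|^2\le 2|w|_H^2-2|w|_V^2$, and the boundedness bound $|\Delta w|_{V^\ast}^2\le|w|_V^2$ are all classical. For $F_2$ we use the elementary inequality
\[
\big||a|^{-1/2}a-|b|^{-1/2}b\big|\le \sqrt2\,|a-b|^{1/2}.
\]
Since $\Ocal$ is bounded, this gives
\[
|F_2(\xi_1)-F_2(\xi_2)|_{V^\ast}^2\le C|F_2(\xi_1)-F_2(\xi_2)|_{L^2}^2\le C|\xi_1-\xi_2|_{L^1}\le C|\xi_1-\xi_2|_{H}.
\]
This is the H\"older condition \eqref{F_2}, with $\delta=1/2$ measured in $H$. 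Because $\X$ is built on $H$, this is the norm that matters for continuity. Condition (H.3) on $\bar F_1$ is void.

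Next we choose $K=1$, $\Phi(\xi)=\|\xi\|_\X^2$ and $\Psi\equiv0$. Then $\Xcal_\MA(R)$ is the closed ball of radius $R$ in $\Mcal^2_\MA(\X)$, which is convex, bounded and weak$^\ast$-closed. Hypothesis (i) follows from the standard variational well-posedness result of \cite{weiroeckner}, because $\xi^{[1/2]}\in L^2(\Omega;L^2(0,T;H))$ with norm at most $C(1+\|\xi\|)$. For (ii) we apply It\^o's formula to $|w|_H^2$ and use the Burkholder--Davis--Gundy inequality and Gronwall's lemma. This gives
\[
\EE\sup_t|w(t)|_H^2+\EE\int_0^T|w|_V^2\,dt\le C\Big(\EE|w_0|^2+1+\EE\int_0^T|\xi^{[1/2]}|_H\,|w|_H\,dt\Big).
\]
The key point is that $|\xi^{[1/2]}|_H^2=|\xi|_{L^1}$ is sublinear in $\xi$. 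Hence $\EE\|w\|_\X^2\le C_1+C_2R$, and this is at most $R^2$ for $R\ge R^0$ large.

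For (iii), we take the difference of two solutions and use the Lipschitz property of $\Sigma$ together with It\^o's formula. This gives
\[
\EE\|w_1-w_2\|_\X^2\le C\,\EE\int_0^T|\xi_1-\xi_2|_{L^1}\,dt\le C\|\xi_1-\xi_2\|_{\Mcal}.
\]
So the map is uniformly (H\"older) continuous.

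For (iv) we choose
\[
\X_1=L^2(0,T;H^1_2)\cap W^{\alpha,2}(0,T;H^{-1}_2),\qquad \alpha<\tfrac12,
\]
which embeds compactly into $L^2(0,T;L^2)$ by the Aubin--Lions--Simon lemma. The $V$-part of the $\X_1$-norm is already controlled by the energy estimate. The fractional time regularity is estimated term by term. The Bochner integral $\int_0^\cdot(\Delta w+\xi^{[1/2]})\,ds$ lies in $W^{1,2}(0,T;V^\ast)$. For the stochastic integral we use the Flandoli--Gatarek estimate $\EE\|\int_0^\cdot\Sigma(w)\,dW\|_{W^{\alpha,2}(0,T;H)}^2\le C\,\EE\int_0^T|\Sigma(w)|_{\mathcal{L}_2}^2\,dt$. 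Strictly, $\X_1$ should be reflexive and dense in $\X$; both properties hold for this choice. Property (v) follows because variational solutions have continuous paths in $H$, hence c\`adl\`ag paths.

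Theorem~\ref{ther_main} then yields $(\Afrak^\ast,W^\ast,w^\ast)$ with $\Vcal(w^\ast)=w^\ast$, so $w^\ast$ solves \eqref{firstex}. The stated moment bound follows from the energy estimate applied with $\xi=w^\ast$. The main obstacle is matching the abstract hypothesis \eqref{F_2}, which is phrased in $V$-norms, with the fact that $\X$ is built on $H$: continuity in (iii) has to be proved with respect to the $L^2(0,T;H)$ topology. We intend to handle this via the $L^1$ H\"older bound above rather than by quoting (H.3) literally.
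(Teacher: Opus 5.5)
Your proposal is correct and follows essentially the same route as the paper. You apply Theorem~\ref{ther_main} with $\X=L^2(0,T;L^2(\Ocal))$ and obtain:
\begin{itemize}
\item condition (i) from the variational well-posedness theory;
\item condition (ii) from the It\^o energy estimate, using that $|\xi^{[\frac12]}|_H^2=|\xi|_{L^1}$ is sublinear;
\item condition (iii) from $|\xi_1^{[\frac12]}-\xi_2^{[\frac12]}|_H^2\le C|\xi_1-\xi_2|_{L^1}$, which is exactly how the paper sidesteps the $V$-norm form of \eqref{F_2};
\item condition (iv) from Simon's compactness for $L^2(0,T;H^1_2)\cap\mathbb{W}^\alpha_2(0,T;H^{-1}_2)$ with $\alpha<\frac12$.
\end{itemize}
The differences are cosmetic. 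You take a closed ball, which actually fits the closedness requirement better than the paper's strict inequality. You also invoke the Flandoli--Gatarek estimate for the stochastic integral, whereas the paper bounds $\EE|u(t)-u(s)|_{V^\ast}^2\le C(|t-s|+|t-s|^2)$ directly.
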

 
\begin{proof} 
 We prove Corollary \ref{Existenceheat} by applying Theorem~\ref{ther_main}. To this end, we first specify the precise setting and the underlying function spaces. Then we verify that the coefficients and data of the equation satisfy all assumptions of Theorem~\ref{ther_main}. Once these conditions are checked, the corollary follows directly from the theorem. For notation convenience, we will denote $H_2^1(\CO)$, $L^2(\CO)$ and $H_2^{-1}(\CO)$ by $V$, $H$ and $V^\ast$, respectively.
 
 Let us  define 
\begin{align*}
	\Mcal_{\Afrak}^{2}(\X):=\Big\{ \xi&:\Omega\times[0,T]\to H\;\colon\;\\
	&\qquad\text{\ensuremath{\xi} is \ensuremath{\mathbb{F}}-progressively measurable}\;\text{and}\;\EE\big[\|\xi\|_{\X}^{2}\big]<\infty\Big\},
\end{align*}equipped with  the norm 
\begin{align*}
	\|\xi\|_{\Mcal_{\Afrak}^{2}(\X)}^2:=\EE\big[\|\xi\|_{\X}^2\big], \quad \xi \in \Mcal_{\Afrak}^{2}(\X).
\end{align*}
Now, we define 
\begin{align*}
	\mathcal{X}_\MA(R) :=\bigg\{\xi \in \Mcal_{\Afrak}^{2}(\X): \EE\bigg[\int_0^T \|\xi(t)\|_H^2d t\bigg]<R \bigg\}, 
\end{align*}where $R$ is a positive real number. Clearly, the set $ \mathcal{X}_\MA(R) $ is a bounded convex set in $\Mcal_{\Afrak}^{2}(\X)$.

For fixed $\MA$ and $W$, we define the operator $$\Vcal:=\Vcal_{\MA,W}:\Mcal_{\Afrak}^2(\X) \times L^2(\Omega,\mathcal{F}_0,\P;H)\break\to \Mcal_{\Afrak}^2(\X), \text{ for }\xi\in \Mcal_{\Afrak}^2(\X)
$$
  by
\begin{align*}
	\Vcal(\xi):=\Vcal(\xi,u_0):=u,
\end{align*}where $u$ is the solution to the following It\^o stochastic partial differential equation
\begin{equation}\label{TS1}
	\left\{
	\begin{aligned}
		d u(t)&= \big(\Delta u(t)+\xi^{[\frac{1}{2}]}(t)\big)\, d t+\Sigma (u(t))\, dW(t), \ t>0, \\
		u(0)&=u_0.
	\end{aligned}
	\right.
\end{equation}

For the well-posedness of the system \eqref{firstex}, we apply Theorem \ref{ther_main}. For this, we have to show the Conditions (\ref{i})--(\ref{v}). Let us start by verifying Condition (\ref{i}).

\begin{step}
	\item 
\textbf{Verification of Condition (\ref{i})}.
In order to verify this, we need to show that for any \(R>0\), initial data \(u_0\in L^2(\Omega;H)\), and \(\xi\in \Mcal_{\Afrak}^2(\X)\cap \mathcal{X}_\MA(R)\), there exists a unique solution to the system \eqref{TS1}. For this part, we apply  \cite[Theorem 5.1.3]{weiroeckner}.
To be more precise, to show that \eqref{TS1} is well-posed,  for a given $\xi \in \mathcal{X}_\MA(R)$, and  $t\in [0,T]$, we define an operator 
$\CA_\xi(t,\cdot):V \to V^\ast$ by
\begin{align*}
	\CA_\xi (u):=  \Delta u+\xi^{[\frac{1}{2}]}, \quad \text{ for } \ \xi\in \Mcal_{\Afrak}^2(\X),\ u\in V. 
\end{align*}
Let us provide the verification of the assumptions on the operator $\CA_\xi(t,\cdot)$\footnote{Note that, the operator $Au=\Delta u$ satisfies the Hypothesis \ref{hyp} (H.1) and for the well-posedness we are again replacing the operator $A$ by $\mathcal{A}_\xi=A+\xi^{[\frac{1}{2}]}$.} given by \cite[Theorem 5.1.3]{weiroeckner}. 
In this setting, we have $V$, $H$ and $V^\ast$ as $H_2^1(\CO)$, $L^2(\CO)$ and $H_2^{-1}(\CO)$, respectively,
and $f_\xi=|\xi|_{L^1}$. Note that $f_\xi\in L^1(\Omega;L^1(0,T;\RR^+))$ for $\xi\in\mathcal{X}_\MA(R)$.
Now, let us start:

\begin{enumerate}
	
	\item \emph{Hemicontinuity}. We know that, the operator $\CA_\xi(t,\cdot)$ is affine, therefore, for all $u_1,u_2,u_3\in V$, the map 
	\begin{align*}
		\lambda \mapsto \big\la \CA_\xi(t,u_1+\lambda u_2),u_3\big\ra
	\end{align*}
	is continuous on $\R$. Since $\Delta$ is hemicontinuous, the hemicontinuity holds.
	
	\item \emph{Local monotonicity}. Fix $u_1,u_2\in V$ and note that 
	\begin{align*}
		&\big\la \CA_\xi(t,u_1)-\CA_\xi(t,u_2),u_1-u_2\big\ra+|\Sigma(u_1)-\Sigma(u_2)|_{\mathcal{L}_2(H,H)}^2\\& \leq -|u_1-u_2|_{V}^2+C|u_1-u_2|_H^2
		\leq C|u_1-u_2|_H^2, 
	\end{align*}and hence the local monotonicity property holds.
	\item \emph{Coercivity}. For $u\in V$, we have 
	\begin{align*}
		2\big\la\CA_\xi(t,u),u\big\ra+|\Sigma(u)|_{\mathcal{L}_2(H,H)}^2 \leq C|u|_{H}^2-|u|_V^2+C(1+|\xi|_{L^1}).
	\end{align*}
	\item \emph{Growth}.  For $u\in V$, we have 
	\begin{align*}
		|\CA_\xi(t,u)|_{V^\ast}^2 \leq |u|_V^2+C|\xi|_{L^1}.
	\end{align*}
\end{enumerate}
We verified the Hypothesis of \cite[Theorem 5.1.3]{weiroeckner}, which ensures the existence of a unique solution to the system \eqref{TS1}.

\item  \textbf{Verification of Condition (\ref{ii})}.
In this step, we prove the invariance of the set \(\mathcal{X}_\MA(R)\), i.e., we verify Condition~\eqref{ii}. 
The key ingredient is the energy estimate established in the following claim:

\begin{claim}\label{eeestimate}
	Let \(R>0\) and let \(\xi\in \Mcal_{\Afrak}^2(\X)\cap \mathcal{X}_\MA(R)\). 
	Then the (unique) solution \(u\) to \eqref{TS1} satisfies
	\begin{align}\label{TS2}
		\EE\Big[\sup_{t\in[0,T]} | u(t) | _H^2\Big]
		+4\,\EE\Big[\int_0^T  | u(t) | _V^2\,dt\Big]
		\le
		\Big(2\,\EE[ | u_0 | _H^2]+C R^{\frac{1}{2}}+CT\Big)e^{CT}.
	\end{align}
\end{claim}
Assuming Claim~\ref{eeestimate}, we obtain the following invariance criterion:
if \(R>0\) is such that
\[
\Big(2\,\EE[|u_0|_H^2]+C R^{\frac{1}{2}}+CT\Big)e^{CT}\le R,
\]
then, for every \(\xi\in \mathcal{X}_\MA(R)\), the mapping \(\Vcal(\xi)\) belongs to \(\mathcal{X}_\MA(R)\). In particular, the set $\mathcal{X}_\MA(R)$ is invariant under $\Vcal$.

\begin{proof}[Proof of Claim \ref{eeestimate}]

Applying It\^o's formula to the process $ | u | _{H}^2$, we find for all $t\in[0,T], \ \P$-a.s.,
\begin{align*}
	 | u(t) | _H^2& =  | u_0 | _H^2-2\int_0^t  | u(s) | _V^2\,ds +2\int_0^t \big(\xi^{[\frac{1}{2}]}(s),u(s)\big)\,ds+\int_0^t | \Sigma(u(s)) | _{\mathcal{L}_2(H,H)}^2 \,ds 
	\\& \quad + 2\int_0^t \big(\Sigma(u(s))\, dW(s), u(s)\big).
\end{align*} Using the linear growth of $\Sigma$, we arrive at for all $t\in[0,T],\ \P$-a.s.,
\begin{align*}
	 | u(t) | _H^2+2\int_0^t  | u(s) | _V^2\,ds & \leq   | u_0 | _H^2+2\int_0^t \big(\xi^{[\frac{1}{2}]}(s),u(s)\big)\,ds+C \int_0^t\big(1+ | u(s) | _{H}^2\big) \,ds 
	\\& \quad + 2\int_0^t \big(\Sigma(u(s))\, dW(s), u(s)\big).
\end{align*}Taking the supremum over time from $0$ to $T$ and then expectation on both sides of the above inequality, we deduce 
\begin{align*}
	&\EE\bigg[\sup_{t\in[0,T]}	 | u(t) | _H^2\bigg]+2\EE\bigg[\int_0^T  | u(s) | _V^2\,ds\bigg] \\& \leq  \EE\big[ | u_0 | _H^2\big]+2\EE\bigg[\int_0^T  | \xi(s) | _{L^1}^\frac{1}{2} | u(s) | _H\,ds\bigg]+C \EE\bigg[\int_0^T\big(1+ | u(s) | _{H}^2\big) \,ds\bigg] 
	\\& \quad + 2\EE\bigg[\sup_{t\in[0,T]}\bigg|\int_0^t \big(\Sigma(u(s))\, dW(s), u(s)\big)\bigg|\bigg]
	\\& \leq 
	\EE\big[ | u_0 | _H^2\big]+\EE\bigg[\int_0^T  | \xi(s) | _{L^1}\,ds\bigg]+C \EE\bigg[\int_0^T\big(1+ | u(s) | _{H}^2\big) \,ds\bigg] 
	\\& \quad + 2\EE\bigg[\bigg(\int_0^T  | \Sigma(u(s)) | _{\mathcal{L}_2(H,H)}^2 | u(s) | _H^2\,ds\bigg)^\frac{1}{2}\bigg]
	\\& \leq 
	\EE\big[ | u_0 | _H^2\big]+C\bigg\{\EE\bigg[\int_0^T  | \xi(s) | _{L^2}^2\,ds\bigg]\bigg\}^\frac{1}{2}+C \EE\bigg[\int_0^T\big(1+ | u(s) | _{H}^2\big) \,ds\bigg] 
	\\& \quad + 2\EE\bigg[\bigg(\int_0^T  | \Sigma(u(s)) | _{\mathcal{L}_2(H,H)}^2 | u(s) | _H^2\,ds\bigg)^\frac{1}{2}\bigg]
	\\& \leq 
	\EE\big[ | u_0 | _H^2\big]+CR^\frac{1}{2}+C \EE\bigg[\int_0^T\big(1+ | u(s) | _{H}^2\big) \,ds\bigg] 
	\\& \quad + \frac{1}{2}\EE\bigg[\sup_{t\in[0,T]} | u(t) | _H^2\bigg]+C\EE \bigg[\int_0^T  | \Sigma(u(s)) | _{\mathcal{L}_2(H,H)}^2\bigg]
	\\& \leq 
	\EE\big[ | u_0 | _H^2\big]+CR^\frac{1}{2}+C \EE\bigg[\int_0^T\big(1+ | u(s) | _{H}^2\big) \,ds\bigg] 
	+\frac{1}{2} \EE\bigg[\sup_{t\in[0,T]} | u(t) | _H^2\bigg].
\end{align*}An application of Gronwall's inequality in the above inequality yields
\begin{align*}
	&	\EE\bigg[\sup_{t\in[0,T]}	 | u(t) | _H^2\bigg]+4\EE\bigg[\int_0^T  | u(s) | _V^2\,ds\bigg] 
	\leq  
	\Big(2\EE\big[ | u_0 | _H^2\big]+CR^\frac{1}{2}+CT \Big)e^{CT} . 
\end{align*}
\end{proof}

\item 
\textbf{Verification of Condition (\ref{iii})}. Our aim is to show the continuity of the operator $\Vcal$ in the norm induced by $\Mcal_{\Afrak}^2(\X)$. Let $\xi_1,\xi_2\in \Mcal_{\Afrak}^2(\X)\cap \mathcal{X}_\MA(R)$ be arbitrary, then there exists a positive constant $C$ such that 
\begin{align}\label{conti}
	\EE\big[\|\Vcal(\xi_1)-\Vcal(\xi_2)\|_\X ^2\big] \leq C\Big\{\EE\big[\|\xi_1-\xi_2\|_\X^2\big]\Big\}^\frac{1}{2}.
\end{align}

In order to prove the inequality \eqref{conti}, we consider the difference $u_1-u_2$, which solves the following system:
\begin{equation*}
	\left\{
	\begin{aligned}
		d ( u_1-u_2)(t)&= \big(\Delta (u_1-u_2)+\xi_1^{[\frac{1}{2}]}-\xi_2^{[\frac{1}{2}]}\big)\, d t+\big(\Sigma (u_1)-\Sigma(u_2)\big)\, dW, \\
		(u_1-u_2)(0)&=0.
	\end{aligned}
	\right.
\end{equation*}Applying It\^o's formula to the process $ | u_1-u_2 | _H^2$, we obtain  for all $t\in[0,T], \ \P$-a.s.,
\begin{align*}
	 | u_1(t)-u_2(t) | _H^2&=   -2\int_0^t  | u_1(s)-u_2(s) | _V^2\,ds+2 \int_0^t \big(\xi_1^{[\frac{1}{2}]}(s)-\xi_2^{[\frac{1}{2}]}(s), u_1(s)-u_2(s)\big)
	\\&\quad +\int_0^t  | \Sigma (u_1(s))-\Sigma(u_2(s)) | _{\mathcal{L}_2(H,H)}^2\,ds
	\\&
	\quad + \int_0^t \Big(\big(\Sigma (u_1(s))-\Sigma(u_2(s))\big)\, dW(s), u_1(s)-u_2(s)\Big).
\end{align*} 
A standard computation yields, for every $t \in [0,T]$, $\mathbb{P}$-a.s.,
\begin{align*}
	& | u_1(t)-u_2(t) | _H^2  +2\int_0^t  | u_1(s)-u_2(s) | _V^2\,ds
	\\&
	= \underbrace{2 \int_0^t \big(\xi_1^{[\frac{1}{2}]}(s)-\xi_2^{[\frac{1}{2}]}(s), u_1(s)-u_2(s)\big)\,ds}_{I_1}
	+C \int_0^t  | u_1(s)-u_2(s) | _{H}^2\,ds
	\\&
	\quad +2 \underbrace{\int_0^t \Big(\big(\Sigma (u_1(s))-\Sigma(u_2(s))\big)\, dW(s), u_1(s)-u_2(s)\Big) }_{I_2}.
\end{align*}Let us consider the term $I_1$ and estimate it as 
\begin{align*}
	|I_1| &\leq 2 \int_0^t  | \xi_1^{[\frac{1}{2}]}(s)-\xi_2^{[\frac{1}{2}]}(s) | _{H} |  u_1(s)-u_2(s) | _H\,ds
	\\& \leq 2 \int_0^t  | \xi_1(s)-\xi_2(s) | _{L^1}^{\frac{1}{2}} |  u_1(s)-u_2(s) | _H\,ds
	\\&\leq 
	\int_0^t  | \xi_1(s)-\xi_2(s) | _{L^1}\,ds + \int_0^t |  u_1(s)-u_2(s) | _H^2\,ds
	\\&\leq 
	C \int_0^t  | \xi_1(s)-\xi_2(s) | _{H}\,ds + \int_0^t |  u_1(s)-u_2(s) | _H^2\,ds
	\\&\leq 
	C t^{\frac{1}{2}}\bigg(\int_0^t  | \xi_1(s)-\xi_2(s) | _{H}^2\,ds\bigg)^{\frac{1}{2}}+ \int_0^t |  u_1(s)-u_2(s) | _H^2\,ds,
\end{align*}where we have used  H\"older's continuity of the square-root, more precisely, $|u^{[a]}-v^{[a]}|\leq |u-v|^a$, for $0<a\leq 1$. 

For the final term $I_2$, we apply Burkholder-Davis-Gundy's inequality together with the Lipschitz continuity of $\Sigma$, and estimate it as follows
\begin{align*}
	\EE\bigg[\sup_{t\in[0,T]}|I_2(t)|\bigg] \leq \frac{1}{2}\EE\bigg[\sup_{t\in[0,T]} | u_1(t)-u_2(t) | _H^2\bigg]+ C \EE\bigg[\int_0^T | u_1(t)-u_2(t) | _H^2\,ds \bigg].
\end{align*}Combining all the above estimates, we obtain
\begin{align*}
	&	\EE\bigg[\sup_{t\in[0,T]} | u_1(t)-u_2(t) | _H^2\bigg]+4\EE\bigg[\int_0^T  | u_1(t)-u_2(t) | _V^2d t\bigg] \\&\leq 
	C T^{\frac{1}{2}}\bigg\{\EE\bigg[\int_0^T | \xi_1(t)-\xi_2(t) | _{L^2}^2\,dt\bigg]\bigg\}^{\frac{1}{2}}+C \EE\bigg[\int_0^T |  u_1(t)-u_2(t) | _H^2\,dt\bigg].
\end{align*}An application of Gronwall's inequality yields
\begin{align*}
	\EE\bigg[\sup_{t\in[0,T]}|u_1(t)-u_2(t)|_H^2\bigg] \leq C \Big\{\EE\big[\|\xi_1-\xi_2\|_{\X}^2\big]\Big\}^{\frac{1}{2}}, 
\end{align*} and hence continuity is established.

\item \textbf{Verification of Condition (\ref{iv})}. In this step we prove that the operator $\mathcal{V}$ maps the set $\mathcal{X}_{\MA}(R)$ to a pre-compact set.  For our purposes, we will use the compactness result by Simon (see \cite{Simon1986}). For our requirement, we need the following:
\begin{enumerate}
	\item[(CC1)]\label{CCi} \emph{Compact containment condition:} From the energy estimate \eqref{TS2}, there exists a positive constant $C$ depending on initial data $u_0$ and $R$ such that 
	\begin{align*}
		\EE\bigg[\int_0^T|u(t)|_V^2\,dt\bigg]\leq C, 
	\end{align*}that means the compact containment condition holds;

\item[(CC2)]\label{CCii} From compactness result by Simon (see \cite{Simon1986}), we know that 
\begin{align*}
\X_1:=	L^2(0,T;V)\cap \mathbb{W}^\alpha_2(0,T;V^\ast) \hookrightarrow L^2(0,T;H).
\end{align*}In view of the condition (CC1), it only remains to verify that the solution belongs to $\mathbb{W}^\alpha_2(0,T;V^\ast)$, that is, the time regularity of the solution.
\end{enumerate}Assuming the conditions (CC1) and (CC2), we obtain that operator $\mathcal{V}$ maps the set $\mathcal{X}_{\MA}(R)$ to a pre-compact set. Let us provide a verification of time regularity of the solution.

\vspace{1mm}
\noindent
 \textbf{Proof of time regularity of the solution.} {Our aim is to show that the solution $u(\cdot,\omega)\in \mathbb{W}^{\alpha}_2(0,T;V^{\ast})$\footnote{For any given separable Banach space $E$, $p\in (1,\infty)$ and $\alpha\in (0,1)$, the symbol $\mathbb{W}^\alpha_p(0,T;E)$ represent a Sobolev space of all $u\in L^p(0,T;E)$ such that 
 	\begin{align*}
 		\int_0^T\int_0^T\frac{|u(t)-u(s)|^p_{E}}{|t-s|^{1+\alpha p}}\,ds\,dt<\infty,
 	\end{align*}
 equipped with the norm 
 \begin{align*}
 	\|u\|_{\mathbb{W}^\alpha_p(0,T;E)}^p:=\int_0^T |u(s)|_E^p\,ds+	\int_0^T\int_0^T\frac{|u(t)-u(s)|^p_{E}}{|t-s|^{1+\alpha p}}\,ds\,dt.
 \end{align*}},  $\P$-a.s. In other words, we need to show the following 
\begin{align*}
	\|u\|_{\mathbb{W}_2^\alpha(0,T;V^{\ast})}^2 = \int_0^T|u(s)|_{V^\ast}^2\, d s+\int_0^T\int_0^T \frac{|u(t)-u(s)|_{V^\ast}^2}{|t-s|^{1+2\alpha}}\, ds \, dt <\infty.
\end{align*}The first term in the above expression is already finite. Therefore, we need to focus on the second term appearing in the right hand side of above expression. From \eqref{TS1}, we obtain for $s,t\in [0,T]$\footnote{We define  $t\vee s:=\max \{t,s\}$ and $t\wedge s:=\min \{t,s\}$.}, 
\begin{align}\label{380}\nonumber
	\EE \big[|u(t)-u(s)|_{V^\ast}^2\big]  &\leq C \EE\bigg[\int_{t\wedge s}^{t\vee s} |\Delta u(r)|_{V^\ast} \, dr\bigg]^2 +C \EE\bigg[\int_{t\wedge s}^{t\vee s} |\xi^{[\frac{1}{2}]}(r)|_{V^\ast}\, dr \bigg]^2
	\\&\nonumber
	\quad + C\EE\bigg[\bigg|\int_{t\wedge s}^{t\vee s}\Sigma(u(r))\, dW(r)\bigg|_{V^\ast}\bigg]^2
	\\&\nonumber
	\leq C |t-s| \EE\bigg[\int_{t\wedge s}^{t\vee s} | u(r)|_{V}^2\, dr\bigg] +C \EE\bigg[\int_{t\wedge s}^{t\vee s} |\xi(r)|_{L^1}^\frac{1}{2}\, dr \bigg]^2
	\\&\nonumber
	\quad + C\EE\bigg[\bigg|\int_{t\wedge s}^{t\vee s}\Sigma(u(r))\, dW(r)\bigg|_{V^\ast}\bigg]^2
	\\&\nonumber
	\leq C |t-s| \EE\bigg[\int_{t\wedge s}^{t\vee s} | u(r)|_{V}^2\, dr\bigg] +C \EE\bigg[\int_{t\wedge s}^{t\vee s}\big(1+ |\xi(r)|_{H}\big)\, dr \bigg]^2
	\\&\nonumber
	\quad + C\EE\bigg[\int_{t\wedge s}^{t\vee s}|\Sigma(u(r))|_{\mathcal{L}_2(H,V^\ast)}^2\, d r\bigg]
	\\&\nonumber
	\leq C |t-s| \EE\bigg[\int_{0}^{T} | u(r)|_{V}^2\, dr\bigg]+C|t-s|^2 +C|t-s| \EE\bigg[\int_{0}^{T} |\xi(r)|_{H}^{2}\, dr \bigg]
	\\&\nonumber
	\quad + C|t-s|\EE\bigg[\sup_{r\in[0,T]}\big\{1+|u(r)|_{H}^2\big\}\bigg] 
	\\& \leq C(|t-s|+|t-s|^2), 
\end{align} where we have used the embedding $H\subset V^\ast$, H\"older's, Young's and Burkholder-Davis-Gundy's inequalities, the Hypothesis \ref{hyp} (H.2) and the energy estimate \eqref{TS2}. One should note that the constant $C$ is a generic constant, whose value may change from line to line.

Let us compute 
\begin{align}\label{381}\nonumber
	\int_0^T\int_0^T \frac{\EE\big[|u(t)-u(s)|_{V^\ast}^2\big]}{|t-s|^{1+2\alpha}}\, ds\, dt &\leq C \int_0^T \int_0^T\frac{1}{|t-s|^{1+2\alpha}}\big(|t-s|+|t-s|^2\big)\, ds\,  dt\\& <\infty, 
\end{align}provided $\alpha <\frac{1}{2}$. 

From \eqref{380} and \eqref{381}, we conclude that $u(\cdot,\omega)\in \mathbb{W}^{\alpha}_2(0,T;V^{\ast})$, $\PP$-a.s.}

Using the above facts, that is, the solution $u\in \X_1:= L^2(0,T;V)\cap \mathbb{W}^\alpha_2(0,T;V^\ast)$, for some $\alpha\in(0,\frac{1}{2})$, and the compactness result by Simon \cite{Simon1986}, we conclude that the embedding of $\X_1$ in $L^2(0,T;H)$ is compact. In particular, for any $M>0$ the set 
\begin{align*}
	\Big\{u\in L^2(0,T;H): \|u\|_{L^2(0,T;V)}+\|u\|_{\mathbb{W}^\alpha_2(0,T;V^\ast)}\leq M \Big\}
\end{align*} is compact in $L^2(0,T;H)$. This completes the verification of Assumption (\ref{iv}) of Theorem \ref{ther_main}. Note that, the fractional-regularity estimate is only used for the verification of compactness.  

\item \textbf{Verification of Condition (\ref{v}).} 
{We only have to verify the last assumption of Theorem~\ref{ther_main}, that is, the Skorokhod property  $\Vcal_{\Afrak,W}(\Xcal_{\Afrak}(R))\subset \D([0,T];H),$ $\P$-a.s. This c\`adl\`ag property follows from the variational structure of \eqref{spdes} (see \cite{KrylovRozovskii,weiroeckner}).}
\end{step}Thus, we verified all the assumptions of Theorem \ref{ther_main}, and the assertion holds.
\end{proof}

\subsection{The stochastic porous media equation with non-Lipschitz nonlinearity}\label{Example2}
Our aim is to fit the following model in the setting of Theorem \ref{ther_main}:
\begin{equation}\label{E2}
	\left\{
	\begin{aligned}
		du(t)&= \big(\Delta u^{[\gamma]}(t)+u^{[\frac{1}{2}]}(t)\big)\, d t+ \Sigma(u(t))\, dW(t),\ t>0, \\
		u(0)&=u_0,
	\end{aligned}
	\right.
\end{equation}where $u^{[\gamma]}:=|u|^{\gamma-1}u$ with $\gamma\in (0,\infty)$, and $\Ocal=(0,1)$. We assume that $\gamma> 1$.

In this example, we assume that $F(u)=F_2(u)=u^{[\frac{1}{2}]}$, that is, $F_1(u)\equiv 0$ and $\Sigma=\Sigma_1$, that is $\Sigma_2\equiv 0$, in the settings of Theorem \ref{ther_main}.

In view of Theorem \ref{ther_main}, we need a Gelfand triple $V\subset H \subset V^\ast$ (here $V$ is a reflexive Banach space  and $H$ is a separable Hilbert space) such that the first embedding is compact and dense, whereas the second embedding is continuous.  For our purposes, let us fix $V=L^{\gamma+1}(\Ocal)$ and $H=H_2^{-1}(\Ocal)$, then $V^\ast = L^{\frac{\gamma+1}{\gamma}}(\Ocal)$
and  the duality pairing is defined as 
\begin{align*}
	_{V^\ast}\langle u,v\rangle_{V}=\int_{\Ocal}((-\Delta)^{-1}u)(x)v(x)\, d x.
\end{align*} Let us fix $\X: =L^{\gamma+1}(0,T;H)$. One should note that we chose this Gelfand triple because our solution lies in these spaces. 

 \begin{corollary}\label{ExistencePorous}
Let $Q$ be a  linear, nonnegative definite, symmetric trace class covariance
	operator $Q:L^2(\CO)\to L^2(\CO)$ and  $L^{\gamma+1}(\CO) \subset H_2^{-1}(\CO) \subset L^\frac{\gamma+1}{\gamma}(\CO) $ be the Gelfand triple with the initial condition $L^{\max\{\gamma+1,2p\}}(\Omega;L^{\gamma+1}(\mathcal{O}))$ for $\gamma>1$ and $p\in [1,\gamma+1)$.
	
	Then, there exists a probabilistic weak solution $u$. To be more precise, there exists a triplet
	$$
	(\mathfrak{A},  W,u),
	$$
	where 
	\begin{itemize}
		\item $\Afrak=(\Omega,\Fcal,\mathbb{F},\P)$ is a filtered probability space
		with filtration $\mathbb{F}=\{\Fcal_{t}\}_{t\in [0,T]}$ satisfying the usual conditions;
		\item $W$ is a $L^2$-valued Wiener process over $\mathfrak{A}$ with covariance $Q$;
		\item and $u$ is a strong solution of \eqref{E2} over $\mathfrak{A}$ such that $u$ is c\`adl\`ag in $H_2^{-1}(\CO)$ progressively measurable in $L^{\gamma+1}(\CO)$ and
		$$
		\EE\bigg[\sup_{t\in[0,T]} | u(t) | _{H_2^{-1}}^2\bigg] +\EE\bigg[\int_0^T | u(t) | _{L^{\gamma+1}}^{\gamma+1}\,dt\bigg] <\infty.
		$$
	\end{itemize}
\end{corollary}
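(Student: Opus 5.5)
We will prove Corollary \ref{ExistencePorous} exactly as Corollary \ref{Existenceheat}: we apply Theorem \ref{ther_main} with $m=\gamma+1$, $V=L^{\gamma+1}(\Ocal)$, $H=H_2^{-1}(\Ocal)$, $\X=L^{\gamma+1}(0,T;H)$ and
\[
\mathcal{X}_\MA(R):=\Big\{\xi\in\Mcal^{\gamma+1}_\MA(\X):\ \EE\Big[\int_0^T|\xi(t)|_{V}^{\gamma+1}\,dt\Big]\le R\Big\}.
\]
For $\xi\in\mathcal{X}_\MA(R)$ the operator $\Vcal(\xi)=u$ is defined through the frozen equation
\[
du=\big(\Delta u^{[\gamma]}+\xi^{[\frac12]}\big)\,dt+\Sigma(u)\,dW,\qquad u(0)=u_0 .
\]

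\textbf{Condition (\ref{i}).} We invoke \cite[Theorem 5.1.3]{weiroeckner} for $\CA_\xi(u)=\Delta u^{[\gamma]}+\xi^{[\frac12]}$ on the triple $L^{\gamma+1}\subset H^{-1}_2\subset (L^{\gamma+1})^\ast$.
\begin{itemize}
\item Hemicontinuity and monotonicity are classical for the porous medium operator, since $\langle\Delta u^{[\gamma]}-\Delta v^{[\gamma]},u-v\rangle=-\int(u^{[\gamma]}-v^{[\gamma]})(u-v)\,dx\le0$. Together with a Lipschitz $\Sigma$ in $\Lcal_2(L^2,H^{-1}_2)$ this gives local monotonicity with $\rho\equiv0$.
\item Coercivity reads $2\langle\Delta u^{[\gamma]},u\rangle=-2|u|_{L^{\gamma+1}}^{\gamma+1}$.
\item Growth: $|\Delta u^{[\gamma]}|_{V^\ast}=|u|^{\gamma}_{L^{\gamma+1}}$, so $|\cdot|_{V^\ast}^{(\gamma+1)/\gamma}\le|u|_V^{\gamma+1}$.
\end{itemize}
The shift $\xi^{[1/2]}$ is absorbed into the $f$-term: $\langle \xi^{[1/2]},u\rangle\le|\xi^{[1/2]}|_{V^\ast}|u|_V\le \ep|u|_V^{\gamma+1}+C_\ep|\xi|_{L^{(\gamma+1)/(2\gamma)}}^{(\gamma+1)/(2\gamma)}$. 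The last term is integrable in $\Omega\times[0,T]$ by $\xi\in\mathcal{X}_\MA(R)$ and $\Ocal$ bounded.

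\textbf{Condition (\ref{ii}).} We apply Itô's formula to $|u|_{H^{-1}_2}^2$ and use BDG as in Claim \ref{eeestimate}. The sublinear term gives $\EE\int|\xi|_V^{(\gamma+1)/2\gamma}\le CR^{1/(2\gamma)}$, a power $<1$ of $R$. This yields
\[
\EE\sup_t|u|_{H}^2+\EE\int_0^T|u|_V^{\gamma+1}\le \big(C\EE|u_0|_H^2+CR^{1/(2\gamma)}+CT\big)e^{CT},
\]
so invariance holds for $R$ large.

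\textbf{Condition (\ref{iii}).} Continuity follows by Itô on $|u_1-u_2|_H^2$. The monotone part has a good sign. The perturbation is estimated via $|\xi_1^{[1/2]}-\xi_2^{[1/2]}|\le|\xi_1-\xi_2|^{1/2}$ and the pairing $\langle\cdot,\cdot\rangle$ with $u_1-u_2\in V$: it is bounded by $\|\xi_1-\xi_2\|^{1/2}_{L^1}\,|u_1-u_2|_V$, which in turn is bounded using the a priori $V$-bounds and Hölder. Gronwall then gives Hölder continuity in $\Mcal^{\gamma+1}_\MA(\X)$. The delicate point is that $\X$ measures $\xi$ only in $H=H^{-1}_2$, while the square root is Hölder only in $L^1$. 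We would interpolate between the $H^{-1}_2$ distance and the uniform $L^{\gamma+1}$ bound of $\mathcal{X}_\MA(R)$ to get $|\xi_1-\xi_2|_{L^1}\lesssim|\xi_1-\xi_2|_{H^{-1}}^{\theta}|\xi_1-\xi_2|_{L^{\gamma+1}}^{1-\theta}$. This uses a Gagliardo--Nirenberg type bound in one dimension; an alternative is to exploit uniform continuity on bounded sets, which is all (\ref{iii}) requires.

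\textbf{Condition (\ref{iv}).} Compactness uses $\X_1=L^{\gamma+1}(0,T;L^{\gamma+1})\cap\mathbb{W}^\alpha_{p}(0,T;V^\ast)$. The difficulty is that $L^{\gamma+1}\hookrightarrow H^{-1}_2$ is compact, but the Aubin--Lions--Simon lemma needs a triple $B_0\Subset B\subset B_1$; here we take $B_0=L^{\gamma+1}$, $B=H^{-1}_2$, $B_1=V^\ast$ or a larger negative Sobolev space. The time regularity estimate mirrors \eqref{380}. For the drift we use $\EE|\int_s^t\Delta u^{[\gamma]}|_{V^\ast}^{p}\le|t-s|^{p-1}\EE\int|u|_V^{\gamma p}$ with $p=(\gamma+1)/\gamma$. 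For the noise we use BDG with higher moments, which is where the assumption $u_0\in L^{\max\{\gamma+1,2p\}}$ enters, since we need the $2p$-moment energy estimate. This gives $\EE|u(t)-u(s)|_{V^\ast}^{p}\le C|t-s|^{\min\{p-1,p/2\}}$ and hence a $\mathbb{W}^\alpha_p$ bound for small $\alpha$.

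\textbf{Condition (\ref{v}).} The càdlàg, indeed continuous, property in $H^{-1}_2$ follows from the variational Itô formula \cite{KrylovRozovskii,weiroeckner}.

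The main obstacle is the continuity step (\ref{iii}), because of the mismatch between the Hölder continuity of $u^{[1/2]}$ in $L^1$ and the weak norm $H^{-1}_2$ of $\X$. The interpolation above is what we would try first.
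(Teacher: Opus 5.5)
\textbf{The gap is in condition (iii), and neither of your remedies closes it.} Your steps (i), (ii) and (v) follow the paper's proof. You absorb $\xi^{[1/2]}$ into the dissipation rather than using $|u|_H$ and Gronwall, which is equally fine. Step (iii) is exactly where you located the difficulty.

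\emph{The interpolation inequality is false.} The bound $|f|_{L^1}\lesssim |f|_{H^{-1}_2}^{\theta}|f|_{L^{\gamma+1}}^{1-\theta}$ fails for every $\theta\in(0,1]$. Take $f_n(x)=\sin(2\pi n x)$: its $L^1$ and $L^{\gamma+1}$ norms do not depend on $n$. Integrating by parts against $\phi\in H^1_0(\Ocal)$ gives $|f_n|_{H^{-1}_2}\le (2\pi n)^{-1}$. A Gagliardo--Nirenberg bound needs positive smoothness on the strong side, and $L^{\gamma+1}$ has none.

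\emph{The fallback fails too.} ``Uniform continuity on bounded sets'' cannot work, because $\Vcal$ is not even continuous on $\Xcal_\Afrak(R)$ in the topology of $\Mcal^{\gamma+1}_\Afrak(L^{\gamma+1}(0,T;H))$. Take the deterministic inputs $\xi_n(x)=1+\operatorname{sign}\sin(2\pi n x)$, with values in $\{0,2\}$; they lie in $\Xcal_\Afrak(R)$ for $R>2^\gamma T$. The antiderivative of $\operatorname{sign}\sin(2\pi n\,\cdot)$ is a triangle wave of height $1/(2n)$ vanishing at $0$ and $1$. Hence $|\xi_n-1|_{H^{-1}_2}\le 1/(2n)$, so $\xi_n\to 1$ in $\X$. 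On the other hand, $\xi_n^{[1/2]}=\tfrac{\sqrt2}{2}\big(1+\operatorname{sign}\sin(2\pi n x)\big)\to\tfrac{\sqrt2}{2}=(\tfrac12)^{[1/2]}$ in $H^{-1}_2$. The It\^o/Gronwall difference estimate needs the forcing difference only in $H$. It therefore gives $\Vcal(\xi_n)\to\Vcal(\tfrac12)$ in $L^2(\Omega;C([0,T];H))$. But $\Vcal(\tfrac12)\neq\Vcal(1)$, since their drifts differ by the constant $1-\tfrac{\sqrt2}{2}$. So no argument can verify (iii) with inputs measured in $L^{\gamma+1}(0,T;H^{-1}_2)$.

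\textbf{How the paper handles (iii), and what you would still need.} The paper avoids this obstruction because its Step (III) measures the inputs in the $V$-norm. It bounds the forcing through $L^{\gamma+1}(\Ocal)\hookrightarrow H^{-1}_2(\Ocal)$ and $|\xi_1^{[1/2]}-\xi_2^{[1/2]}|\le|\xi_1-\xi_2|^{1/2}$. It ends with $C\{\EE\int_0^T|\xi_1-\xi_2|_V^{\gamma+1}\,dt\}^{1/(\gamma+1)}$ on the right-hand side. That is H\"older continuity in $L^{\gamma+1}(\Omega\times(0,T);L^{\gamma+1}(\Ocal))$. It matches $\X=L^m(0,T;V)$ from Section~\ref{schauder}, not the $\X=L^{\gamma+1}(0,T;H)$ declared in the example. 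That is the estimate you should use. To be consistent, however, the tightness in (iv) must then hold in $L^{\gamma+1}(0,T;L^{\gamma+1}(\Ocal))$. This requires spatial compactness beyond the $L^{\gamma+1}$ energy bound. Your (iv), like the paper's Step (IV), only gives compactness in $L^{\gamma+1}(0,T;H^{-1}_2)$. This missing ingredient is the real gap.

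\textbf{A smaller point in your (iv).} You use $\mathbb{W}^\alpha_r(0,T;V^\ast)$ with $r=(\gamma+1)/\gamma$ and $\alpha<1/(\gamma+1)$. Simon's criterion for compactness in $L^{\gamma+1}(0,T;H)$ then requires $\alpha>1/r-1/(\gamma+1)=(\gamma-1)/(\gamma+1)$, which fails for $\gamma\ge 2$. Keep the $L^\infty(0,T;H)$ bound from (ii) inside $\X_1$, as the paper does, so you can interpolate the time-translation estimates.
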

\begin{proof} The proof of Corollary \ref{ExistencePorous} relies on Theorem \ref{ther_main}. First, we have to define the required spaces $\mathcal{M}_\MA^{\gamma+1}(\X)$ and $\mathcal{X}_\MA(R)$. Then, we need to verify the assumptions of Theorem \ref{ther_main}. Once the assumptions of Theorem \ref{ther_main} are verified, the corollary is straightforward.

Let us define 
\begin{align*}
\Mcal_{\Afrak}^{\gamma+1}(\X) := \big\{ \xi:&\Omega \times [0,T]\to H: \\& \quad \xi \text{ is $\mathbb{F}$-progressively measurable and } \EE\big[\|\xi\|_{\X}^{\gamma+1}\big]<\infty\big\},
\end{align*}equipped with the norm 
\begin{align*}
	\|\xi\|_{\Mcal_{\Afrak}^{\gamma+1}(\X)}^{\gamma+1}:=\EE\big[\|\xi\|_{\X}^{\gamma+1}\big], \ \quad \xi\in \Mcal_{\Afrak}^{\gamma+1}(\X).
\end{align*}Now, we define 
\begin{align*}
	\Xcal_\Afrak(R):=\bigg\{\xi \in \Mcal_\Afrak^{\gamma+1}(\X):\EE\bigg[\int_0^T  |\xi(t)|_{V}^{\gamma+1}d t \bigg]<R \bigg\},
\end{align*}where $R$ is a positive real number. From the definition, it is clear that the set $\Xcal_\Afrak(R)$ is a bounded convex subset of $\Mcal_\Afrak^{\gamma+1}(\X)$.

For fixed $\Afrak$ and $W$, we define the operator $$\Vcal:=\Vcal_{\Afrak,W}:\Mcal_{\Afrak}^{\gamma+1}(\X)\times L^{\gamma+1}(\Omega,\mathcal{F}_0,\P;L^{\gamma+1}(\CO))\to \Mcal_{\Afrak}^{\gamma+1}(\X), \text{ for } \xi\in \Mcal_{\Afrak}^{\gamma+1}(\X)$$ via
\begin{align*}
	\Vcal(\xi):=\Vcal(\xi,u_0):=u, 
\end{align*}where $u$ is the solution of the following SPDEs
\begin{equation}\label{E21}
	\left\{
	\begin{aligned}
		d u(t)&= \big(\Delta u^{[\gamma]}(t)+\xi^{[\frac{1}{2}]}(t)\big)\, d t+\Sigma(u(t))\, dW(t), \ t\in (0,T),\\
		u(0)&=u_0.
	\end{aligned}
	\right.
\end{equation}

To apply Theorem $\ref{ther_main}$, we must verify Conditions $(\ref{i})$–$(\ref{v})$ stated therein. We now check these assumptions one by one, beginning with Condition $(\ref{i})$.

\begin{step}
	\item
\textbf{Verification of Condition (\ref{i})}. As mentioned in the previous example, we need to show that for any $R>0$, initial data $u_0$ and $\xi\in \Mcal_{\Afrak}^{\gamma+1}(\X)\cap \Xcal_{\Afrak}(R)$, there exists a unique solution to the system \eqref{E21}, where the idea is the same as in \cite[Theorem 5.1.3]{weiroeckner}.
For the well-posedness result, for $t\in[0,T]$, we define an operator 
$\CA_\xi(t,\cdot):V \to V^\ast$ by 
\begin{align*}
	\CA_\xi (u):= A(u)+ F(\xi)= \Delta u^{[\gamma]}+\xi^{[\frac{1}{2}]}, \quad \text{ for } \ \xi\in \Mcal_{\Afrak}^{\gamma+1}(\X),\ u\in V. 
\end{align*} 
 Let us start with the proof of verification of the Hypothesis \ref{hyp} for the operator $\CA_\xi(t,\cdot)$\footnote{Note that, the operator $A(u)=\Delta u^{[\gamma]}$ satisfies the Hypothesis \ref{hyp} (H.1) and for the well-posedness we are replacing the operator $A$ by $\mathcal{A}_\xi=A+\xi^{[\frac{1}{2}]}$ }. 
\begin{enumerate}
	\item \emph{Hemicontinuity}. For any $\lambda\in\R$, we need to prove that, for $u_1,u_2,u_3\in V$ 
	\begin{align*}
		\lambda\mapsto\ _{V^\ast}\langle \CA_\xi(t,u_1+\lambda u_2), u_3\rangle_{V}
	\end{align*}
is continuous on $\R$. One should note that the mapping $\lambda\mapsto (u_1+\lambda u_2)^{[\gamma]}$ is continuous and by the Dominated Convergence Theorem, the mapping 
$$\R_0^+\ni \lambda \mapsto \int_\Ocal (u_1+\lambda u_2)^{[\gamma]}u_3dx$$
 is continuous.

	\item \emph{Local monotonicity}. First note that, for $u_1,u_2\in V$, we have (cf. \cite[pp. 87]{weiroeckner})
	\begin{align*}
		-\int_\Ocal \Big((u_1^{[\gamma]}-u_2^{[\gamma]})(u_1-u_2)\Big)\, dx \leq 0.
	\end{align*}Therefore, 
	\begin{align*}
	&	_{V^\ast}\langle \CA_\xi(t,u_1)-\CA_\xi(t,u_2), u_1-u_2\rangle _{V}+ | \Sigma(u_1)-\Sigma(u_2) | _{\mathcal{L}_2(L^2(\CO),H)}^2 
	\\& \leq  	-\int_\Ocal \Big((u_1^{[\gamma]}-u_2^{[\gamma]})(u_1-u_2)\Big)\, dx + | \Sigma(u_1)-\Sigma(u_2) | _{\mathcal{L}_2(L^2(\CO),H)}^2 \\&
	\leq C | u_1-u_2 | _{H}^2 .
		\end{align*} 

\item \emph{Coercivity}. For $u\in V$, we have 
\begin{align*}
&2	_{V^\ast}\langle \CA_\xi(t,u),u\rangle_{V}+ | \Sigma(u) | _{\mathcal{L}_2(L^2(\CO),H)}^2\\
&\nonumber= -2 | u | _V^{\gamma+1}+2	_{V^\ast}\langle \xi^{[\frac{1}{2}]},u\rangle_V+ | \Sigma(u) | _{\mathcal{L}_2(L^2(\CO),H)}^2
	\\&\nonumber\leq 
	-2 | u | _V^{\gamma+1}+2 | \xi | _{L^1}^\frac{1}{2} | u | _H+C(1+ | u | _{H}^2)
	\\&\nonumber\leq
	-2 | u | _V^{\gamma+1}+C | \xi | _{L^2}+C(1+ | u | _{H}^2)
		\\&\leq
		C | u | _{H}^2-2 | u | _V^{\gamma+1}+C(\gamma)+ C(\gamma)f_\xi(t),
\end{align*}with $f_\xi(t)= | \xi(t) | _{V}^{\gamma+1}$, since $f_\xi\in L^1(\Omega;L^1(0,T;\R^+_0))$ for $\xi\in \mathcal{X}_\MA(R)$, the coercivity condition holds;
\item \emph{Growth}. For $u,v\in V$, 
\begin{align*}
	_{V^\ast}\langle \CA_\xi(t,u),v\rangle_V
	& \leq  | u | _V^\gamma | v | _V +	_{V^\ast}\langle \xi^{[\frac{1}{2}]},v\rangle_V
\leq C\big( | u | _V^\gamma+ | \xi | _{L^1}^\frac{1}{2}\big) | v | _V.
\end{align*}Thus, we have
\begin{align*}
	 | \CA_\xi(t,u) | _{V^\ast}^{\frac{\gamma+1}{\gamma}} &\leq C \big( | u | _V^\gamma+ | \xi | _{L^1}^\frac{1}{2}\big)^\frac{\gamma+1}{\gamma}
	\\&\nonumber\leq C  | u | _V^{\gamma+1}+C  | \xi | _{L^1}^{\frac{\gamma+1}{2\gamma}}
	\\&\leq 
	C | u | _V^{\gamma+1}+C(\gamma)+C(\gamma) | \xi | _V^{\gamma+1}.
\end{align*}
\end{enumerate}
Thus, we verified the Hypothesis of \cite[Theorem 5.1.3]{weiroeckner}, and ensured the existence of a unique solution to the system \eqref{E21}.

\item \textbf{Verification of Condition (\ref{ii})}.
In this step, we establish the invariance of the set $\mathcal{X}_\MA(R)$, i.e., we verify the Assumption (\ref{ii}) and the key ingredient is the energy estimate obtained in the following claim:
\begin{claim}\label{TCl2} Let $R>0$, and let $\xi\in \Mcal_{\Afrak}^{\gamma+1}(\X)\cap \mathcal{X}_\MA(R)$. Then the (unique) solution $u$ to the system \eqref{E21} satisfies 
\begin{align}\label{EEs}\nonumber
&	\EE\bigg[\sup_{t\in[0,T]}	 | u(t) | _{H}^2\bigg] +4\EE\bigg[\int_0^T | u(t) | _V^{\gamma+1}\,dt\bigg] \\& \leq \Big(2\EE\big[	 | u_0 | _{H}^2\big]+C(T)R^\frac{1}{\gamma+1}+C(T)\Big)e^{CT},
\end{align} and for $u_0\in L^{\max\{\gamma+1,2p\}}(\Omega;L^{\gamma+1}(\mathcal{O}))$, where $p\in [1,\gamma+1)$, we have
	\begin{align*}
		&	\EE\bigg[\sup_{t\in[0,T]}	 | u(t) | _{H}^{2p}\bigg] +C\EE\bigg[\int_0^T | u(t) | _V^{\gamma+1}\,dt\bigg]^p \\& \leq \Big(C\EE\big[	 | u_0 | _{H}^{2p}\big]+CR^\frac{p}{\gamma+1}+CT\Big)e^{CT}. 
\end{align*}
\end{claim}
Assuming Claim \ref{TCl2} is true, then it is obvious to choose $R>0$ such that 
\begin{align*}
	\Big(2\EE\Big[	 | u_0 | _{H}^2\Big]+C(T)R^\frac{1}{\gamma+1}+C(T)\Big)e^{CT}\leq R,
\end{align*}for every $\xi\in \mathcal{X}_\MA(R)$, that means, the operator $\Vcal(\xi)$ belongs to $\Xcal_\Afrak(R)$. In particular the set $\mathcal{X}_\MA(R)$ is invariant under $\mathcal{V}$.
\begin{proof}[Proof of Claim \ref{TCl2}]
Applying It\^o's formula to the process $ | u | _{H}^2$, we obtain for all $t\in[0,T]$, $\P$-a.s., 
\begin{align*}
&	 | u(t) | _{H}^2 +2 \int_0^t | u(s) | _V^{\gamma+1}\,ds 
	\\&\leq 	 | u_0 | _{H}^2+2\int_0^t \big(\nabla^{-1}\xi^{[\frac{1}{2}]}(s), \nabla^{-1}u(s)\big)\,ds+  \int_0^t | \Sigma(u(s)) | _{\mathcal{L}_2(L^2(\CO),H)}^2\,ds \\&\quad + 2\int_0^t \big(\Sigma(u(s))\, dW(s),u(s)\big)_H.
\end{align*}Using the embedding $V\hookrightarrow H$ and Young's inequality, we find 
\begin{align*}
	\big|\big(\nabla^{-1}\xi^{[\frac{1}{2}]}, \nabla^{-1}u\big)\big| \leq  | \xi^{[\frac{1}{2}]} | _{L^{\gamma+1}} | u | _H \leq C  | \xi | _{L^\frac{\gamma+1}{2}}^\frac{1}{2} | u | _H \leq \frac{1}{2} | \xi | _{V}+C | u | _H^2.  
\end{align*}Thus, we have for all $t\in[0,T], \ \P$-a.s., 
\begin{align*}
	&	 | u(t) | _{H}^2 +2 \int_0^t | u(s) | _V^{\gamma+1}\,ds 
	\\&\leq 	 | u_0 | _{H}^2+\int_0^t  | \xi(s) | _V\,ds +C\int_0^t\big(1+ | u(s) | _H^2)\,ds+ 2\int_0^t \big(\Sigma(u(s))\, dW(s),u(s)\big)_H.
\end{align*}Taking supremum over $t$ from $0$ to $T$, and then the expectation on both sides of the above inequality, we obtain 
\begin{align*}
	&\EE\bigg[\sup_{t\in[0,T]}	 | u(t) | _{H}^2\bigg] +2 \EE\bigg[\int_0^T | u(s) | _V^{\gamma+1}\,ds\bigg] 
	\\&\leq \EE\Big[	 | u_0 | _{H}^2\Big]+\EE\bigg[\int_0^T  | \xi(s) | _V\,ds\bigg] +C\EE\bigg[\int_0^T\big(1+ | u(s) | _H^2)\,ds\bigg]\\&\quad + 2\EE\bigg[\sup_{s\in[0,t]}\bigg|\int_0^t \big(\Sigma(u(s))\, dW(s),u(s)\big)_H\bigg|\bigg]
		\\&\leq \EE\Big[	 | u_0 | _{H}^2\Big]+T^{\frac{\gamma}{\gamma+1}}\bigg\{\EE\bigg[\int_0^T  | \xi(s) | _V^{\gamma+1}\,ds\bigg]\bigg\}^\frac{1}{\gamma+1} +C\EE\bigg[\int_0^T\big(1+ | u(s) | _H^2)\,ds\bigg]\\&\quad + C\EE\bigg[\bigg(\int_0^T  | \Sigma(u(s)) | _{\mathcal{L}_2(L^2(\CO),H)}^2 | u(s) | _H^2\,ds\bigg)^\frac{1}{2}\bigg]
			\\&\leq \EE\Big[	 | u_0 | _{H}^2\Big]+T^{\frac{\gamma}{\gamma+1}}R^\frac{1}{\gamma+1} +C\EE\bigg[\int_0^T\big(1+ | u(s) | _H^2)\,ds\bigg] + \frac{1}{2}\EE\bigg[\sup_{t\in[0,T]}  | u(t) | _{H}^2\bigg].
\end{align*}An application of Gronwall's inequality in the above inequality yields
\begin{align*}
	\EE\bigg[\sup_{t\in[0,T]}	 | u(t) | _{H}^2\bigg] +4\EE\bigg[\int_0^T | u(t) | _V^{\gamma+1}\,dt\bigg] \leq \Big(2\EE\Big[	 | u_0 | _{H}^2\Big]+C(T)R^\frac{1}{\gamma+1}+C(T)\Big)e^{CT}.
\end{align*} 
\end{proof}

\item
\textbf{Verification of Condition (\ref{iii})}. Our main goal is to prove the continuity of the operator $\Vcal$ in the norm induced by $\Mcal_\Afrak^{\gamma+1}(\X)$. Let $\xi_1,\xi_2\in \Mcal_\Afrak^{\gamma+1}(\X)\cap\Xcal_\Afrak(R)$ be arbitrary, then there exists a positive constant $C$ such that 
\begin{align*}
	\EE\big[\|\Vcal(\xi_1)-\Vcal(\xi_2)\|_\X ^{\gamma+1}\big] \leq C\Big\{\EE\big[\|\xi_1-\xi_2\|_\X^{\gamma+1}\big]\Big\}^\frac{1}{\gamma+1}.
\end{align*}
Let us consider the difference $u_1-u_2$, solving the following system 
\begin{equation*}
	\left\{
	\begin{aligned}
		d ( u_1-u_2)(t)&= \big(\Delta \big(u_1^{[\gamma]}(t)-u_2^{[\gamma]}\big)(t)+\xi_1^{[\frac{1}{2}]}(t)-\xi_2^{[\frac{1}{2}]}(t)\big)\, d t\\&\quad +\big(\Sigma (u_1(t))-\Sigma(u_2(t))\big)\, dW(t), \ t\in (0,T),\\
		(u_1-u_2)(0)&=0.
	\end{aligned}
	\right.
\end{equation*}Applying It\^o's formula to the process $|u_1-u_2|_H^2$ and performing a standard calculation, we find $\P$-a.s.,
\begin{align}\label{CT3}\nonumber
&\sup_{t\in[0,T]} | u_1(t)-u_2(t) | _H^2+\frac{1}{2^{\gamma-1}} \int_0^T | u_1(s)-u_2(s) | _V^{\gamma+1}\,ds 
\\&\nonumber
\leq 2\int_0^T \big(\nabla^{-1}(\xi_1^{[\frac{1}{2}]}(s)-\xi_2^{[\frac{1}{2}]}(s)), \nabla^{-1}(u_1(s)-u_2(s))\big)\,ds \\&\nonumber \quad +\int_0^T | \Sigma(u_1(s))-\Sigma(u_2(s)) | _{\mathcal{L}_2(L^2(\CO),H)}^2\,ds \\&\quad +2\sup_{t\in[0,T]}\bigg|\int_0^t\big((\Sigma(u_1(s))-\Sigma(u_2(s)))\, dW(s), u_1(s)-u_2(s)\big)_H \bigg|. 
\end{align}
We focus on the first term on the right-hand side of \eqref{CT3} and estimate it as follows
\begin{align*}
&	\bigg|2\int_0^T \big(\nabla^{-1}(\xi_1^{[\frac{1}{2}]}(s)-\xi_2^{[\frac{1}{2}]}(s)), \nabla^{-1}(u_1(s)-u_2(s))\big)\,ds \bigg|
\\&\leq C T^{\frac{\gamma}{\gamma+1}} \bigg(\int_0^T | \xi_1(s)-\xi_2(s) | _V^{\gamma+1}\,ds \bigg)^\frac{1}{\gamma+1}+C \int_0^T | u_1(s)-u_2(s) | _{H}^2\,ds.
\end{align*}For the final term appearing in the right-hand side of \eqref{CT3}, we use Burkholder-Davis-Gundy's and Young's inequalities and estimate it as 
\begin{align*}
	&2\EE\bigg[\sup_{t\in[0,T]}\bigg|\int_0^t\big((\Sigma(u_1(s))-\Sigma(u_2(s)))\, dW(s), u_1(s)-u_2(s)\big)_H \bigg|\bigg] \\&\leq 
	\frac{1}{2}\EE\bigg[\sup_{t\in[0,T]} | u_1(t)-u_2(t) | _H^2\bigg]+C \EE\bigg[\int_0^T | \Sigma(u_1(s))-\Sigma(u_2(s)) | _{\mathcal{L}_2(L^2(\CO),H)}^2\,ds  \bigg]. 
\end{align*}Combining the above facts with \eqref{CT3}, we arrive at 
\begin{align*}
&	\EE \bigg[\sup_{t\in[0,T]} | u_1(t)-u_2(t) | _H^2\bigg]+C(\gamma)\EE\bigg[ \int_0^T | u_1(s)-u_2(s) | _V^{\gamma+1}\,ds \bigg]
	\\&\nonumber
	\leq C T^{\frac{\gamma}{\gamma+1}} \bigg\{\EE\bigg[\int_0^T | \xi_1(s)-\xi_2(s) | _V^{\gamma+1}\,ds\bigg]\bigg\}^\frac{1}{\gamma+1}+C \EE\bigg[\int_0^T | u_1(s)-u_2(s) | _{H}^2\,ds\bigg].
\end{align*}An application of Gronwall's inequality in the above inequality yields
\begin{align*}
	&	\EE \bigg[\sup_{t\in[0,T]} | u_1(t)-u_2(t) | _H^2\bigg]+C(\gamma)\EE\bigg[ \int_0^T | u_1(s)-u_2(s) | _V^{\gamma+1}\,ds \bigg]
\\&	\leq C T^{\frac{\gamma}{\gamma+1}} \bigg\{\EE\bigg[\int_0^T | \xi_1(s)-\xi_2(s) | _V^{\gamma+1}\,ds\bigg]\bigg\}^\frac{1}{\gamma+1},
\end{align*}and hence the continuity holds.

\item \textbf{Verification of Condition (\ref{iv})}. In this step, we establish that the operator $\Vcal$ maps the set $\Xcal_{\MA}(R)$ to a pre-compact set. We are going to follow the same idea as in Step (IV) of the Example \ref{Example1}.
From the energy estimate \eqref{EEs}, there exists a positive constant $C$ depending on initial data $u_0$ and $R$ such that 
\begin{align*}
	\EE\bigg[\int_0^T|u(t)|_V^{\gamma+1}\,dt\bigg]\leq C, 
\end{align*}that means the compact containment condition holds.

Now, we prove the solution's time regularity.  

\textbf{Time regularity of the solution}. {Our aim is to show that the solution $u(\cdot,\omega)\in \mathbb{W}^{\alpha}_2(0,T;H)$, $\P$-a.s. From \eqref{E21}, we obtain for $s,t\in[0,T]$
\begin{align}\label{320}\nonumber
\EE\big[	|u(t)-u(s)|_H^2\big]  & \leq 
	 C \EE \bigg[\int_{t\wedge s}^{t\vee s} |\Delta u^{[\gamma]}(r)|_{H}\,  dr \bigg]^2+C\EE\bigg[\int_{t\wedge s}^{t\vee s} |\xi^{[\frac{1}{2}]}(r)|_H\, d r\bigg]^2\\&\quad +C\EE\bigg[\bigg|\int_{t\wedge s}^{t\vee s}\Sigma(u(r))\, dW(r)\bigg|_{H}^2\bigg]. 
\end{align}We consider the first term appearing in the right hand side of \eqref{320}, and estimate it as 
\begin{align}\label{321}\nonumber
C	\EE \bigg[\int_{t\wedge s}^{t\vee s} |\Delta u^{[\gamma]}(r)|_{H}\,  dr \bigg]^2 & \leq C \EE\bigg[ \int_{t\wedge s}^{t\vee s} |u(r)|_V^\gamma \; dr \bigg]^2
\\&\nonumber \leq C |t-s|^{\frac{2}{\gamma+1}}\EE\bigg[\int_{t\wedge s}^{t\vee s} |u(r)|_V^{\gamma+1}\, dr \bigg]^{\frac{2\gamma}{\gamma+1}}
\\& \leq \nonumber C |t-s|^{\frac{2}{\gamma+1}}\bigg\{\EE\bigg[\int_0^T |u(r)|_V^{\gamma+1}\, dr \bigg]\bigg\}^{\frac{2\gamma }{\gamma+1}}
\\& \leq C |t-s|^{\frac{2}{\gamma+1}}. 
\end{align}The above calculation can be justified as follows: 
\begin{align*}
	|\Delta |u|^{\gamma-1}u|_{H} &\leq C| \Delta|u|^{\gamma-1}u|_{V^\ast},\\
	\big|_{V^\ast}\langle \Delta|u|^{\gamma-1}u,v\rangle_{V}\big| &=\bigg|\int_{\mathcal{O}} |u(x)|^{\gamma-1}u(x)v(x)\, dx\bigg|\leq \int_\mathcal{O} |u(x)|^{\gamma}|v(x)|dx \leq C |u|_{V}^\gamma |v|_{V},
\end{align*}for $u,v\in V$, where we have used the fact that  $(V^\ast:=)L^{\frac{\gamma+1}{\gamma}}(\mathcal{O}) \hookrightarrow H_2^{-1}(\mathcal{O})(=:H)$, since $H_2^{1}(\mathcal{O})\hookrightarrow L^{\gamma+1}(\mathcal{O})(=:V)$.

Now, we move the second term appearing in the right hand side of \eqref{320} and estimate it as follows: 
\begin{align}\label{322}\nonumber
	C\EE\bigg[\int_{t\wedge s}^{t\vee s} |\xi^{[\frac{1}{2}]}(r)|_H\, d r\bigg]^2& \leq C \EE\bigg[\int_{t\wedge s}^{t\vee s} |\xi^{[\frac{1}{2}]}(r)|_{L^{\gamma+1}}\,  dr\bigg]^2  
	\leq C \EE\bigg[\int_{t\wedge s}^{t\vee s} |\xi(r)|_{L^{\frac{\gamma+1}{2}}}^\frac{1}{2} \, dr\bigg]^2\\&\nonumber
		\leq C \EE\bigg[\int_{t\wedge s}^{t\vee s} |\xi(r)|_{L^{\gamma+1}}^\frac{1}{2}\,  dr\bigg]^2
	\\& \leq \nonumber
	C |t-s|^{\frac{2\gamma+1}{\gamma+1}}\bigg\{\EE\bigg[\int_{t\wedge s}^{t\vee s} |\xi(r)|_V^{\gamma+1}\,  d r\bigg] \bigg\}^\frac{1}{\gamma+1}
		\\&\nonumber \leq 
	C |t-s|^{\frac{2\gamma+1}{\gamma+1}}\bigg\{\EE\bigg[\int_0^T |\xi(r)|_V^{\gamma+1}\,  d r\bigg] \bigg\}^\frac{1}{\gamma+1} \\&\leq 
	C |t-s|^{\frac{2\gamma+1}{\gamma+1}}. 
\end{align}Finally, we consider the final term appearing in the right hand side of \eqref{320} and estimate it using Burkholder-Davis-Gundy's inequality as 
\begin{align}\label{323}\nonumber
	C\EE\bigg[\bigg|\int_{t\wedge s}^{t\vee s}\Sigma(u(r))\, dW(r)\bigg|_{H}^2\bigg] & \leq C \EE\bigg[\int_{t\wedge s}^{t\vee s} |\Sigma(u(r))|_{\mathcal{L}_2(L^2(\mathcal{O}), H)}^2dr  \bigg] 
	\\& \leq C |t-s| \EE\bigg[\sup_{r\in[0,T]}\big\{1+ |u(r)|_H^2\big\}\bigg].
\end{align}With the help of \eqref{EEs}, \eqref{321}, \eqref{322}, \eqref{323} and \eqref{320}, we obtain 
\begin{align*}
&\int_0^T \int_0^T \frac{\EE\big[|u(t)-u(s)|_H^2\big]}{|t-s|^{1+2\alpha}}	
\\& \nonumber \leq C\int_0^T \int_0^T \frac{1}{|t-s|^{1+2\alpha }} \Big(|t-s|^{\frac{2}{\gamma+1}}+|t-s|^{\frac{2\gamma+1}{\gamma+1}}+|t-s|  \Big)\, ds\, dt 
\\& <\infty,
\end{align*} provided $\alpha <\frac{1}{\gamma+1}$.
Thus, we finally conclude that $u(\cdot,\omega)\in \mathbb{W}^{\alpha}_2(0,T;H)$, $\PP$-a.s., for $\alpha <\frac{1}{\gamma+1}$.}
Using the above fact and the energy estimate \eqref{EEs}, we know that the solution $u\in \X_1:= L^{\gamma+1}(0,T;V)\cap \mathbb{W}^\alpha_2(0,T;H)\cap L^\infty(0,T;H)$, for some $\alpha\in (0,\frac{1}{\gamma+1})$. Combining this with the compactness result \cite[Theorem A.2]{FHEHKK} (cf.~\cite{Simon1986}), we conclude that the embedding $\X_1$ in $L^{\gamma+1}(0,T;H)$ is compact. In particular, for any $M>0$, the set 
\begin{align*}
		\Big\{u\in L^{\gamma+1}(0,T;H): \|u\|_{L^{\gamma+1}(0,T;V)}+\|u\|_{\mathbb{W}^\alpha_2(0,T;H)}+\|u\|_{L^\infty(0,T;H)}\leq M \Big\}
\end{align*} is compact in $L^{\gamma+1}(0,T;H)$. This completes the verification of Assumption (\ref{iv}) of Theorem \ref{ther_main}. {Note that, the fractional-regularity estimate is only used for the verification of compactness.}

\item \textbf{Verification of Condition (\ref{v}).} 
{We only have to verify the last assumption of Theorem~\ref{ther_main}, that is, the Skorokhod property  $\Vcal_{\Afrak,W}(\Xcal_{\Afrak}(R))\subset \D([0,T];H),$ $\P$-a.s. This c\`adl\`ag property follows from the variational structure of \eqref{spdes} (see \cite{KrylovRozovskii,weiroeckner}).}
\end{step}Thus, we verified all the assumptions of Theorem \ref{ther_main}, and the assertion holds.
\end{proof}

\subsection{The stochastic porous media equation driven by nonlinear gradient noise}\label{Example3}

In order to match the framework settings given in Section \ref{schauder}, we fix $A(u)=\Delta u^{[\gamma]}$, $F(u)\equiv 0$, and {$\Sigma(u)=\Sigma_2(u)=\nabla u^{[\frac{1}{2}]} $}.
{Then, we can modify Step (VII) in the proof. Here, we go along the lines of the article in \cite{FKEHMH}}.
Now, the modification will also apply to a non-Lipschitz diffusion coefficient. 
In particular, we can show the existence of a solution to the following model:
\begin{equation}\label{E3}
	\left\{
	\begin{aligned}
		du(t)&=\Delta u^{[\gamma]}(t)\, dt + {\nabla u^{[\frac{1}{2}]}(t)}dW(t),\ \text{ in } \ (0,T)\times \Ocal,\\
		u(0)&=u_0, \ \text{ on } \{0\}\times \Ocal. 
	\end{aligned}
	\right.
\end{equation}where $u^{[\gamma]}:=|u|^{\gamma-1}u, \ \gamma\in (0,\infty)$, and $\Ocal=(0,1)$. We assume that $\gamma>1$. One should note that the difference between Example 2 in Subsection \ref{Example2} and this example lies in the diffusion. Here, we are allowing a noise coefficient that depends on the gradient of the unknown in the system. Therefore, in this section, our focus will be on the diffusion term only; for the remaining terms, we are referring to Subsection \ref{Example2}. For the basic definitions of the function space, we are referring to the previous section (Subsection \ref{Example2}). One should note that, in this example, our aim is to use a fixed-point argument in the diffusion term, and then the system \eqref{E3} is reduced to the system \eqref{E31}, where the noise behaves as additive noise, that is, independent of $u$.

 \begin{corollary}\label{ExistencePorousGrad}
Let $Q$ be a  linear, nonnegative definite, symmetric trace class covariance
	operator $Q:L^2(\CO)\to L^2(\CO)$ and  $L^{\gamma+1}(\CO) \subset H_2^{-1}(\CO) \subset L^{\frac{\gamma+1}{\gamma}}(\CO) $ be the Gelfand triple  defined above with the initial condition $L^{\max\{\gamma+1,2p\}}(\Omega;L^{\gamma+1}(\mathcal{O}))$ for $\gamma>1$ and $p\in [1,\gamma+1)$.
	
	Then, there exists a probabilistic weak solution $u$.
	To be more precise, there exists a triplet
	$$
	(\mathfrak{A},  W,u),
	$$
	where 
	\begin{itemize}
		\item $\Afrak=(\Omega,\Fcal,\mathbb{F},\P)$ is a filtered probability space
		with filtration $\mathbb{F}=\{\Fcal_{t}\}_{t\in [0,T]}$ satisfying the usual conditions;
		\item $W$ is a $L^2$-valued Wiener process over $\mathfrak{A}$ with covariance $Q$;
		\item and $u$ is a strong solution of \eqref{E3} over $\mathfrak{A}$ such that $u$ is c\`adl\`ag in $H_2^{-1}(\CO)$ progressively measurable in $L^{\gamma+1}(\CO)$ and
		$$
		\EE\bigg[\sup_{t\in[0,T]} | u(t) | _{H_2^{-1}}^2\bigg] +\EE\bigg[\int_0^T | u(t) | _{L^{\gamma+1}}^{\gamma+1}\,dt\bigg] <\infty.
		$$
	\end{itemize}
\end{corollary}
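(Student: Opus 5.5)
We will derive Corollary~\ref{ExistencePorousGrad} from Theorem~\ref{ther_main}, following the proof of Corollary~\ref{ExistencePorous} almost verbatim. The one change is that the fixed-point argument now acts on the noise coefficient instead of the drift.

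\textbf{Set-up.} We keep $V=L^{\gamma+1}(\Ocal)$, $H=H_2^{-1}(\Ocal)$, $\X=L^{\gamma+1}(0,T;H)$ and $\Xcal_\Afrak(R)$ exactly as in Subsection~\ref{Example2}. For $\xi\in\Xcal_\Afrak(R)$ we let $\Vcal(\xi)=u$ be the solution of the auxiliary system
\begin{align*}
du(t)=\Delta u^{[\gamma]}(t)\,dt+\nabla \xi^{[\frac12]}(t)\,dW(t),\qquad u(0)=u_0 .
\end{align*}
This fits the framework with $\Sigma_1\equiv0$, $\bar F_1\equiv F_2\equiv0$ and $\Sigma_2(\xi)=\nabla\xi^{[\frac12]}$. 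The key observation is that the $H$-norm of a gradient is controlled by a lower norm:
\begin{align*}
|\nabla g|_{H_2^{-1}}\le C|g|_{L^2}.
\end{align*}
Since $Q$ is trace class and the multiplication structure is one-dimensional on $\Ocal=(0,1)$, we expect
\begin{align*}
|\Sigma_2(\xi)|_{\mathcal{L}_2(U,H)}^2&\le C|\xi^{[\frac12]}|_{L^2}^2\le C|\xi|_{L^1}\le C(1+|\xi|_V^{2r}),\\
|\Sigma_2(\xi_1)-\Sigma_2(\xi_2)|_{\mathcal{L}_2(U,H)}^2&\le C|\xi_1^{[\frac12]}-\xi_2^{[\frac12]}|_{L^2}^2\le C|\xi_1-\xi_2|_{L^1}\le C|\xi_1-\xi_2|_V .
\end{align*}
These are the bounds of (H.2) with $r=\kappa=\tfrac12$. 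The second line uses the H\"older continuity $|a^{[1/2]}-b^{[1/2]}|\le|a-b|^{1/2}$.

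\textbf{Verifying (i)--(v).} Following the model of \cite{FKEHMH}, we proceed condition by condition.

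For (i), with $\xi$ fixed the noise is additive, so \cite[Theorem 5.1.3]{weiroeckner} applies with the monotone operator $\Delta u^{[\gamma]}$ and an $L^2(\Omega\times(0,T);\mathcal{L}_2)$ integrand.

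For (ii), we apply It\^o's formula to $|u|_H^2$ and the BDG inequality. This gives
\begin{align*}
\EE\sup_t|u|_H^2+4\EE\int_0^T|u|_V^{\gamma+1}\,dt\le \Big(2\EE|u_0|_H^2+C(T)R^{\frac1{\gamma+1}}+C\Big)e^{CT}.
\end{align*}
The right-hand side grows only sublinearly in $R$, so invariance holds for $R$ large. The higher-moment bound with exponent $p$ follows in the same way.

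For (iii), we subtract the equations for $u_1=\Vcal(\xi_1)$ and $u_2=\Vcal(\xi_2)$. Using monotonicity of $u\mapsto -\Delta u^{[\gamma]}$ (with the estimate $\frac1{2^{\gamma-1}}|u_1-u_2|_V^{\gamma+1}$ as before) together with the Lipschitz-type bound on $\Sigma_2$ above, we obtain
\begin{align*}
\EE\|u_1-u_2\|^{\gamma+1}_{\X}\le C\big(\EE\|\xi_1-\xi_2\|^{\gamma+1}_{L^{\gamma+1}(0,T;V)}\big)^{\frac1{\gamma+1}} .
\end{align*}

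For (iv), we reuse Step (IV) of Subsection~\ref{Example2}. The only new term is the stochastic increment, and by BDG it is bounded by $C|t-s|\,\EE\sup_r(1+|\xi(r)|_{L^1})$.

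For (v), the c\`adl\`ag property in $H$ again follows from the variational It\^o formula \cite{KrylovRozovskii,weiroeckner}. Theorem~\ref{ther_main} then yields a fixed point $w^\ast=\Vcal_{\Afrak^\ast,W^\ast}(w^\ast)$, which is a probabilistic weak solution of \eqref{E3}.

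\textbf{Main obstacle.} We expect the difficulty to lie in (iii) and in the compatibility of norms. The continuity estimate naturally involves $\EE\int_0^T|\xi_1-\xi_2|_{L^1}\,dt$, whereas condition (iii) asks for continuity in the strong topology of $\Mcal^{\gamma+1}_\Afrak(\X)$, i.e.\ in the weaker $H_2^{-1}$-norm. We would first try to close the gap by interpolation: on $\Xcal_\Afrak(R)$ we have the uniform bound $\EE\int_0^T|\xi|_V^{\gamma+1}\,dt\le R$, and $L^1$ lies between $H_2^{-1}$ and $L^{\gamma+1}$. This should give
\begin{align*}
|\xi_1-\xi_2|_{L^1}\le C|\xi_1-\xi_2|_{H}^{\vartheta}|\xi_1-\xi_2|_V^{1-\vartheta}
\end{align*}
for some $\vartheta>0$. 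Combining this with H\"older's inequality in $(\omega,t)$ would yield uniform H\"older continuity on $\Xcal_\Afrak(R)$ with respect to the $\X$-norm, which is exactly what (iii) requires.
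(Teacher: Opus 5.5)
Your set-up and estimates are the paper's: the same decomposition ($\Sigma=\Sigma_2(\xi)=\nabla\xi^{[\frac12]}$, $\Sigma_1\equiv0$, no drift perturbation), the same bounds $|\Sigma_2(\xi)|^2_{\mathcal{L}_2}\le C(1+|\xi|_V)$ and $|\Sigma_2(\xi_1)-\Sigma_2(\xi_2)|^2_{\mathcal{L}_2}\le C|\xi_1-\xi_2|_V$, and the same It\^o/BDG energy and difference estimates. The genuine gap is where you located it, in (iii), but your repair fails.

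The inequality $|f|_{L^1}\le C|f|_{H_2^{-1}}^{\vartheta}|f|_{L^{\gamma+1}}^{1-\vartheta}$ is false for every $\vartheta>0$. Take $f_n(x)=\sin(n\pi x)$: then $|f_n|_{L^1}=2/\pi$ and $|f_n|_{L^{\gamma+1}}\le 1$, while $|f_n|_{H_2^{-1}}=O(1/n)$. Interpolating between a space of smoothness $-1$ and one of smoothness $0$ only gives spaces of negative smoothness, and these do not embed into $L^1$.

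The obstruction is structural, not a matter of a better interpolation. Take $\xi_1=f_n$ (constant in time) and $\xi_2=0$; both lie in $\Xcal_\Afrak(R)$ for $R>T$, and $\|\xi_1-\xi_2\|_{\X}\to0$. Yet the right-hand side of your difference estimate, $C\,\EE\int_0^T|\xi_1-\xi_2|_{L^1}\,ds$, stays equal to $2CT/\pi$. This is not an artefact of the estimate, because the noise coefficient itself does not tend to $\Sigma_2(0)=0$. On $(0,1)$, $|\nabla g|_{H_2^{-1}}$ is comparable to $|g-\bar g|_{L^2}$. Since $(f_n)^{[\frac12]}$ has squared $L^2$-norm $2/\pi$ and mean $O(1/n)$, $|\nabla (f_n)^{[\frac12]}|_{H_2^{-1}}$ stays bounded below. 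An argument based only on the It\^o identity for $u_1-u_2$, whose forcing is $\Sigma_2(\xi_1)-\Sigma_2(\xi_2)$, therefore cannot give uniform continuity in the norm of $\Mcal_\Afrak^{\gamma+1}(\X)$ with $\X=L^{\gamma+1}(0,T;H_2^{-1})$.

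The paper does not supply the missing argument either. Its Step (III) ends with $C(\gamma,T)\big\{\EE\int_0^T|\xi_1-\xi_2|_V^{\gamma+1}\,ds\big\}^{1/(\gamma+1)}$ and calls this the required continuity. That is continuity for the stronger $L^{\gamma+1}(\Omega\times(0,T);V)$-norm of the input. Closing the gap needs an ingredient beyond the energy identity for the difference. One option is a different choice of $\X$ in which $\Sigma_2$ is continuous, together with a matching compactness statement for (iv); another is an argument that exploits the dissipation of $\Delta u^{[\gamma]}$.

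There is a smaller slip in (iv). You bound the stochastic increment by $C|t-s|\,\EE\sup_r(1+|\xi(r)|_{L^1})$. Elements of $\Xcal_\Afrak(R)$ only satisfy $\EE\int_0^T|\xi|_V^{\gamma+1}\,dt<R$, so this supremum is not controlled, and (iv) must hold uniformly in $\xi\in\Xcal_\Afrak(R)$. Use H\"older's inequality in time instead, as the paper does, to get $C|t-s|+C|t-s|^{\frac{\gamma}{\gamma+1}}R^{\frac{1}{\gamma+1}}$. This restricts the fractional exponent to $\alpha<\frac{\gamma}{2(\gamma+1)}$.
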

\begin{proof} Theorem \ref{ther_main} is necessary for the proof of Corollary \ref{ExistencePorousGrad}. Let us provide the exact setup and the underlying function spaces to conclude this. The assumptions of Theorem \ref{ther_main} must then be confirmed. The corollary is a simple consequence of Theorem \ref{ther_main}, once the assumptions of Theorem \ref{ther_main} are confirmed. For notation convenience, we will denote $L^{\gamma+1}(\CO)$, $H_2^{-1}(\CO)$ and $L^\frac{\gamma+1}{\gamma}(\CO)$ by $V$, $H$ and $V^\ast$, respectively.
	 
For fixed $\Afrak$, $W$, we define the operator $$\Vcal:=\Vcal_{\Afrak,W}:\Mcal_{\Afrak}^{\gamma+1}(\X)\times L^{\gamma+1}(\Omega,\mathcal{F}_0,\P;L^{\gamma+1}(\CO))\break\to \Mcal_{\Afrak}^{\gamma+1}(\X), \text{ for } \xi\in \Mcal_{\Afrak}^{\gamma+1}(\X)$$ via
\begin{align*}
	\Vcal(\xi):=\Vcal(\xi,u_0):=u, 
\end{align*}where $u$ is the solution of the following SPDE
\begin{equation}\label{E31}
	\left\{
	\begin{aligned}
		d u(t)&=A(u(t))\, d t+\Sigma(\xi(t))\, dW(t), \\
		u(0)&=u_0,
	\end{aligned}
	\right.
\end{equation}where $A(u)=\Delta u^{[\gamma]}$ and {$\Sigma(\xi)=\Sigma_2(\xi)=\nabla \xi^{[\frac{1}{2}]}$}.

The operator {$\Sigma(\xi)=\Sigma_2(\xi)=\nabla \xi^{[\frac{1}{2}]}(t)$} satisfies the following properties: 
	\begin{enumerate}
	\item For $\xi\in \Xcal_\MA(R)$ there exists a constant $C>0$ such that \begin{align*}
		|\Sigma(\xi)|_{\mathcal{L}_2(L^2(\mathcal{O}),H)}^2 \leq C\big(1+|\xi|_V);
	\end{align*}
	\item For $\xi_1, \xi_2\in \Xcal_\MA(R)$, there exists a positive constant $C$ such that 
	$${ { | \Sigma(\xi_1)-\Sigma(\xi_2) | _{\mathcal{L}_2(L^2(\CO),H)}^2}\le C  | \xi_1-\xi_2 | _{V}.}
	$$
\end{enumerate}

 First, we assume that there exists a solution $u$ to the system \eqref{E31}. Using this solution, we aim to prove the existence of a solution to the system \eqref{E3}. To achieve our goal, we employ Theorem \ref{ther_main}. Therefore, we just need to verify the assumptions of Theorem \ref{ther_main}. Note that we already verified the same assumptions in Section \ref{Example2}, with a different diffusion. In view of the previous example, we will only provide verification for the noise term; the rest can be handled as in the previous example. Let us start with the energy estimate. 

\begin{step}
	\item \textbf{Verification of Condition (\ref{i})}. One should note that we only need to verify this condition of $A(u)=\Delta u^{[\gamma]}$, which is straightforward from Step (I), Example \ref{Example2}. Therefore, we are omitting the verification of this condition.

\item \textbf{Verification of Condition (\ref{ii})}. In this step, our aim is to show the invariance of the set $\Xcal_\MA(R)$, i.e., we verify the Assumption (\ref{ii}) and the main ingredient is the energy estimate established in the following claim:
\begin{claim}\label{CLI3}
	Let $R>0$, and let $\xi\in \Mcal_{\Afrak}^{\gamma+1}(\X)\cap \mathcal{X}_\MA(R)$. Then, the (unique) solution $u$ to the system \eqref{E31} satisfies
	\begin{align}\label{EG}
			\EE\bigg[\sup_{t\in[0,T]} | u(t) | _H^2\bigg]+ 4\EE\bigg[\int_0^T | u(t) | _V^{\gamma+1}\,dt \bigg]
		 \leq 2 \EE\big[ | u_0 | _{H}^2\big]+C (\gamma,T)	R^{\frac{1}{\gamma+1}},
		 \end{align} and for $u_0\in L^{\max\{\gamma+1,2p\}}(\Omega;L^{\gamma+1}(\mathcal{O}))$, where $p\in [1,\gamma+1)$, we have
		 \begin{align*}
		 &	\EE\bigg[\sup_{t\in[0,T]}	 | u(t) | _{H}^{2p}\bigg] +C\EE\bigg[\int_0^T | u(t) | _V^{\gamma+1}\,dt\bigg]^p \\& \leq \Big(C\EE\big[	 | u_0 | _{H}^{2p}\big]+CR^\frac{p}{\gamma+1}+CT\Big)e^{CT}. 
		 \end{align*}
\end{claim}
Assuming Claim \ref{CLI3}, we obtain the following invariance criterion: if $R>0$ is such that
\begin{align*} 2 \EE\big[ | u_0 | _{H}^2\big]+C (\gamma,T)	R^{\frac{1}{\gamma+1}}\leq R,
\end{align*}
then, for every $\xi\in\Xcal_{\MA}(R)$, the operator $\Vcal(\xi)$ belongs to $\Xcal_\MA(R)$. In particular, the set $\Xcal_\MA(R)$ is invariant under $\Vcal$. 
\begin{proof}[Proof of Claim \ref{CLI3}]
Applying It\^o's formula to the process $ | u | _H^2$ and performing a similar calculation as in the previous example, we obtain 
\begin{align}\label{EG1}\nonumber
&	\EE\bigg[\sup_{t\in[0,T]} | u(t) | _H^2\bigg]+ 2\EE\bigg[\int_0^T | u(s) | _V^{\gamma+1}\,ds \bigg]
	\\& \leq \EE\big[ | u_0 | _{H}^2\big]+\EE\bigg[\int_0^T | \Sigma(\xi(s))  | _{\mathcal{L}_2(U,H)}^2\,ds\bigg]+2\EE\bigg[\sup_{t\in [0,T]}\bigg|\int_0^t \big(\Sigma(\xi(s))\, dW(s),u(s)\big)\bigg|\bigg]. 
\end{align}Now, we consider the first term appearing on the right-hand side of \eqref{EG1} and estimate it as follows
\begin{align*}
	\EE\bigg[\int_0^T |\Sigma(\xi(s)) | _{\mathcal{L}_2(U,H)}^2\,ds\bigg]& \leq C	\EE\bigg[\int_0^T \big(1+| \xi(s) |_V\big)\,ds\bigg]
 \\&\leq C (T)+C(\gamma,T)	\bigg\{\EE\bigg[\int_0^T |  \xi(s) | _{V}^{\gamma+1}\,ds\bigg]\bigg\}^{\frac{1}{\gamma+1}}, 
\end{align*}provided $\gamma>1$.

For the final term appearing on the right-hand side of \eqref{EG1}, we use Burkholder-Davis-Gundy's and Young's inequalities and estimate as
\begin{align*}
&	2\EE\bigg[\sup_{t\in [0,T]}\bigg|\int_0^t \big(\Sigma(\xi(s))\, dW(s),u(s)\big)\bigg|\bigg] 
	\\& \leq C \EE\bigg[\bigg(\int_0^T  | \Sigma(\xi(s))| _{\mathcal{L}_2(U,H)}^2 | u(s) | _{H}^2\,ds\bigg)^{\frac{1}{2}}\bigg]
	\\& \leq C \EE\bigg[\sup_{s\in[0,T]} | u(s) | _{H}\bigg(\int_0^T   | \Sigma(\xi(s))| _{\mathcal{L}_2(U,H)}^2 \,ds\bigg)^{\frac{1}{2}}\bigg]
	\\& 
	\leq \frac{1}{2}\EE\bigg[\sup_{s\in[0,T]} | u(s) | _{H}^2\bigg]+ C	\EE\bigg[\int_0^T \big(1+| \xi(s) |_V\big)\,ds\bigg]
	\\& 
	\leq \frac{1}{2}\EE\bigg[\sup_{s\in[0,T]} | u(s) | _{H}^2\bigg]+C (T)+C(\gamma,T)	\bigg\{\EE\bigg[\int_0^T |  \xi(s) | _{V}^{\gamma+1}\,ds\bigg]\bigg\}^{\frac{1}{\gamma+1}}. 
\end{align*}
Combining the estimates above in \eqref{EG1}, we obtain the required energy estimate \eqref{EG}. 
\end{proof}

\item 
\textbf{Verification of Condition (\ref{iii})}. 
Our main goal is to prove the continuity of the operator $\Vcal$ in the norm induced by $\Mcal_\Afrak^{\gamma+1}(\X)$. Let $\xi_1,\xi_2\in \Mcal_\Afrak^{\gamma+1}(\X)\cap\Xcal_\Afrak(R)$ be arbitrary, then there exists a positive constant $C$ such that 
\begin{align*}
	\EE\big[\|\Vcal(\xi_1)-\Vcal(\xi_2)\|_\X ^{\gamma+1}\big] \leq C\Big\{\EE\big[\|\xi_1-\xi_2\|_\X^{\gamma+1}\big]\Big\}^\frac{1}{\gamma+1}.
\end{align*}
Let us consider the difference $u_1-u_2$, solving the following system 
\begin{equation*}
	\left\{
	\begin{aligned}
		d ( u_1-u_2)(t)&= \big(\Delta u_1^{[\gamma]}(t)-\Delta u_2^{[\gamma]}(t)\big)\, dt+\big(\Sigma(\xi_1(t))-\Sigma(\xi_2(t))\big)\, dW(t), \\
		(u_1-u_2)(0)&=0.
	\end{aligned}
	\right.
\end{equation*}Applying It\^o's formula to the process $|u_1-u_2|_H^2$ and performing a standard calculation, we find $\P$-a.s.,
\begin{align}\label{EG2}\nonumber
	&\EE\bigg[\sup_{t\in[0,T]} | u_1(t)-u_2(t) | _H^2\bigg]+\frac{1}{2^{\gamma-1}}\EE\bigg[ \int_0^T | u_1(s)-u_2(s) | _V^{\gamma+1}\,ds \bigg]
	\\&\nonumber
	\leq  \underbrace{\EE\bigg[\int_0^T | \Sigma(\xi_1(s))-\Sigma(\xi_2(s)) | _{\mathcal{L}_2(L^2(\CO),H)}^2\,ds\bigg]}_{I_1} \\&\quad +\underbrace{2\EE\bigg[\sup_{t\in[0,T]}\bigg|\int_0^t\big((\Sigma(\xi_1(s))-\Sigma(\xi_2(s)))\, dW(s), u_1(s)-u_2(s)\big)_H \bigg|\bigg]}_{I_2}. 
\end{align}Let us estimate the term $I_1$ in the following manner
\begin{align*}
	|I_1| &\leq \EE\bigg[\int_0^T | \Sigma(\xi_1(s))-\Sigma({\xi_2(s)}) | _{\mathcal{L}_2(L^2(\CO),H)}^2\,ds\bigg] \leq C \EE\bigg[\int_0^T  | \xi_1(s)-\xi_2(s) | _V\,ds \bigg].
\end{align*}Using Burkholder-Davis-Gundy's and Young's inequalities helps us to estimate the term $|I_2|$ as follows
\begin{align*}
	|I_2| &\leq C \EE\bigg[\bigg(\int_0^T | \Sigma({\xi_1(s)})-\Sigma({\xi_2(s)}) | _{\mathcal{L}_2(L^2(\CO),H)}^2 | u_1(s)-u_2(s) | _H^2\,ds \bigg)^\frac{1}{2}\bigg]
	\\&\leq 
	\frac{1}{2}\EE\bigg[\sup_{t\in[0,T]} | u_1(t)-u_2(t) | _H^2\bigg]+C\EE\bigg[\int_0^T | \Sigma({\xi_1(s)})-\Sigma({\xi_2(s)}) | _{\mathcal{L}_2(L^2(\CO),H)}^2\,ds\bigg] 
	\\&\leq 
		\frac{1}{2}\EE\bigg[\sup_{t\in[0,T]} | u_1(t)-u_2(t) | _H^2\bigg]+C \EE\bigg[\int_0^T  | \xi_1(s)-\xi_2(s) | _V\,ds \bigg].
\end{align*}Substituting the above estimates in \eqref{EG2}, we obtain 
\begin{align*}
	&\EE\bigg[\sup_{t\in[0,T]} | u_1(t)-u_2(t) | _H^2\bigg]+C(\gamma)\EE\bigg[ \int_0^T | u_1(s)-u_2(s) | _V^{\gamma+1}\,ds \bigg]
	\\&
	\leq  C \EE\bigg[\int_0^T  | \xi_1(s)-\xi_2(s) | _V\,ds \bigg]
	\\&
	\leq C (\gamma,T)\bigg\{\EE\bigg[\int_0^T  | \xi_1(s)-\xi_2(s) | _V^{\gamma+1}\,ds \bigg]\bigg\}^\frac{1}{\gamma+1}
\end{align*} which is the required continuity.

\item \textbf{Verification of Condition (\ref{iv})}. In this step, our aim is to demonstrate that the operator $\Vcal$ maps the set $\Xcal_\MA(R)$ to a pre-compact set. The idea is the same as in Example \ref{Example2}. 
From the energy estimate \eqref{EG}, there exists a positive constant $C$ depending on initial data $u_0$ and $R$ such that 
\begin{align*}
	\EE\bigg[\int_0^T|u(t)|_V^{\gamma+1}dt\bigg]\leq C, 
\end{align*}that means the compact containment condition holds.

{
Next, we move to the proof of the time regularity of the solution. Note that most of the computation in this part is similar as in Step (IV) of Example \ref{Example2}; therefore, we only provide the noise term estimate. 
\begin{align*}
C\EE	\bigg[\bigg|\int_{t\wedge s}^{t\vee s} \Sigma(\xi(r))\, dW(r)\bigg|_{H}^2\bigg]& \leq C \EE\bigg[\int_{t\wedge s}^{t\vee s} |\Sigma(\xi(r))|_{\mathcal{L}_2(L^2(\mathcal{O}), H)}^2\, dr \bigg] 
\\& \nonumber \leq C \EE\bigg[\int_{t\wedge s}^{t\vee s} \big(1+|\xi(r)|_V\big)\, d r\bigg]
\\&\nonumber\leq C |t-s|+C|t-s|^{\frac{\gamma}{\gamma+1}} \EE\bigg[\int_{t\wedge s}^{t\vee s} |\xi(r)|_V^{\gamma+1}\,d r\bigg]^{\frac{1}{\gamma+1}}
\\&\leq  \nonumber C |t-s|+C|t-s|^{\frac{\gamma}{\gamma+1}} \bigg\{\EE\bigg[\int_{0}^T |\xi(r)|_V^{\gamma+1}\,d r\bigg]\bigg\}^{\frac{1}{\gamma+1}}
\\&\leq C|t-s|+C|t-s|^{\frac{\gamma}{\gamma+1}}.
\end{align*} Using the estimates of the remaining terms from  Step (IV) of Example \ref{Example2}, we arrive at 
\begin{align*}\label{324}\nonumber
	&\int_0^T \int_0^T \frac{\EE\big[|u(t)-u(s)|_H^2\big]}{|t-s|^{1+2\alpha}}	
	\\& \nonumber \leq C\int_0^T \int_0^T \frac{1}{|t-s|^{1+2\alpha }} \Big(|t-s|^{\frac{2}{\gamma+1}}+|t-s|+|t-s|^{\frac{\gamma}{\gamma+1}}  \Big)\, ds \, dt 
	\\& <\infty,
\end{align*}provided $\alpha<\min\big\{\frac{\gamma}{2(\gamma+1)}, \frac{1}{\gamma+1}\big\}$. The rest follows from Step (IV) of Example \ref{Example2}.}

\item 
\textbf{Verification of Condition (\ref{v}).} 
{We only have to verify the last assumption of Theorem~\ref{ther_main}, that is, the Skorokhod property  $\Vcal_{\Afrak,W}(\Xcal_{\Afrak}(R))\subset \D([0,T];H),$ $\P$-a.s. This c\`adl\`ag property follows from the variational structure of \eqref{spdes} (see \cite{KrylovRozovskii,weiroeckner}).}
\end{step}Thus, we verified all the assumptions of Theorem \ref{ther_main}, and the assertion holds.
\end{proof}

\section{Proof of the stochastic Schauder-Tychonoff theorem \ref{ther_main}}\label{sec:proof_of_schauder}

In this section, we prove our main result (Theorem \ref{ther_main}). Note that we have collected several auxiliary results in
the Appendix, such as the extension of probability spaces, the Haar system, and the L\'evy-Ciesielski construction. 
\begin{proof}[Proof of Theorem \ref{ther_main}]

Fix
$\Afrak$ and $W$, and $R_1,\ldots,R_K>0$, $K\in\N$. In addition, for simplification, we shall omit $R_1,\ldots,R_K$ in the notation for $ \Xcal_{R_1,\ldots,R_K}(\Afrak)$ and write $ \Xcal_{\Afrak}(R)$ instead of $ \Xcal_{R_1,\ldots,R_K}(\Afrak)$. Fix an initial datum $w_0\in L^m(\Omega,\Fcal_0,\mathbb{P};H)$.

\begin{step}
\item In this step we approximate the operator $\mathcal{V}_{\Afrak,W}$. We discretise time, as we will apply the classical Schauder-Tychonoff theorem to a compact subset, which is given by a tight collection of laws on a finite-time grid. 
Let us fix a sequence $\{\ep_\iota\}_{\iota\in\N}$ such that $\ep_\iota\to 0$.

First, let us introduce a dyadic time grid $\pi_n=\{t^n_0=0<t^n_1<t^n_2<\cdots <t^n_{2^n}=T\}$
by $t^n_k= T\frac{k}{ 2^n}$, $k=0,1,2,\ldots,2^n$.
The stochastic process will be approximated by an averaging operator over the dyadic time interval.
To this end, let us define a step-function
$\phi_n:[0,T]\to[0,T]$ by $\phi_n(s)=T\frac{k}{ 2^n}$, if
$k=0,1,2,\ldots,2^n-1$ and $T\frac{k}{ 2^n}\le s<T\frac{k+1}{2^n}$, i.e.,
$\phi_n(s)= T2 ^ {-n}\lfloor 2 ^ ns\rfloor$, $s\ge 0$, where $\lfloor t\rfloor$ is the largest
integer that is less than or equal $t\in \mathbb{R}$. Let $\{w_n\}_{n\in\NN}\subset L^m(\Omega,\Fcal_0,\mathbb{P};V)$ be a sequence, such that $w_n\to w_0$ in $L^m(\Omega,\Fcal_0,\mathbb{P};H)$ and $$\Vert w_n-w_0\Vert_{L^m(\Omega,\Fcal_0,\mathbb{P};H)}\le \frac {\ep_n}n.$$

For a function $\xi\in\Mcal_{\Afrak}^{m}(\X)$, we define
\renewcommand{\Pro}{\mbox{Proj}}
\begin{equation*}
	 \Pro_n(\xi)(s):=\left\{
	 \begin{aligned}
	 	 &w_n, 
&&\mbox{ if } s\in [0,T2^{-n}),
\\
&\frac{2^n}{T}\int_{\phi_n(s)-T2^{-n}}^{\phi_n(s)} \xi(r)\: dr, &&\mbox{ if
} s\geq T2^{-n}.
	 \end{aligned}
\right.
\end{equation*}
Note that $\Pro_n(\xi)$ is a
progressively measurable, $\P$-a.s.~piecewise constant, $H$-valued stochastic process.
\begin{remark}\label{projection}
Observe that the projection operator satisfies the following properties (cf.~\cite[Appendix B]{BHM} or \cite[Appendix B]{FKEHMH}):
\begin{enumerate}[(i)]
	\item\label{PP1} $\Pro_n$ is a linear bounded contraction operator from $\mathbb{X}$ into $\mathbb{X}$;
	\item\label{PP2} {For every fixed $\xi\in \mathbb{X}$ one has $\Pro_n \xi\to \xi$ in $\mathbb{X}$, as
	$n\to\infty$ (strong, i.e.\ pointwise, convergence). We do not claim
	uniform convergence on bounded subsets of $\mathbb{X}$, which would be equivalent to
	operator-norm convergence $\Pro_n\to \mathrm{I}$ and fails in infinite-dimensions.}
\end{enumerate}
\end{remark}

{Thus, by Proposition \ref{prop:haar-uniform} (uniform approximation on compact subsets)},
 we know that for any $\ep>0$, there exists some $n_0\in\NN$ 
 such that for any $r\in (1,m]$,
$$
\Big\{ \EE\Big[\Vert\Pro_n(\xi)-\xi\Vert^r_{\mathbb{X}}\Big]\Big\}^\frac 1r \le  \ep ,\quad \forall \xi\in \mathcal{V}_{\mathfrak{A},W}\left( \mathcal{X}_\mathfrak{A}(R)\right),\quad \forall \, n\ge n_0.
$$
Let $\{n_\iota\}_{\iota\in\NN}$ be a sequence such that
\begin{equation*}
\Big\{ \EE\Big[ \Vert\Pro_{n_\iota}(\xi)-\xi\Vert_{\mathbb{X}}^{m}\Big]\Big\}^\frac{1}{m}\le \frac {\ep_\iota} 3,\quad \forall \xi\in \mathcal{V}_{\mathfrak{A},W}\left( \mathcal{X}_\mathfrak{A}(R)\right).
\end{equation*}
Finally, let us define the operator
$$
\mathcal{V}^\iota _{\mathfrak{A},W}(\xi):=  (\Pro_{n_\iota} \circ\mathcal{V}_{\mathfrak{A},W})(\xi),\quad \xi\in \mathcal{X}_\mathfrak{A}(R).
$$

\item\label{ST2}
Denote $\mathbb{H}:=\mathbb{D}([0,T];H)$, the Skorokhod space of c\`adl\`ag paths in $H$ endowed with the Skorokhod $J_1$-topology, (see \cite[Appendix A2]{Kallenberg}). Given the probability space $\mathfrak{A}=\left(\Omega,\CF,\mathbb{F},\PP\right)$,
for any $\iota \in\NN$ the operator $\mathcal{V}^\iota _{\mathfrak{A},W}$
induces  an operator $\mathscr{V}_\iota$ on $\mathscr{M}_1(\mathbb{X}\cap\mathbb{H})$ (the set of Borel probability measures on $\mathbb{X}\cap\mathbb{H}$).
The construction of the operator $\mathscr{V}_\iota$ is done in this step.
Let $\mathscr{X}$ denote the subset of probability measures $\mathscr{M}_1(\mathbb{X}\cap\mathbb{H})$, defined as
\begin{align*}
\lqq{ \mathscr{X}:=\left\{ \mu\in \mathscr{M}_1(\mathbb{X}\cap\mathbb{H}):\int_{\mathbb{X}}\Phi_j(\xi)\,\mu(d\xi)\le R_j,\right.
}
\\
&& \left. \phantom{\int_{\mathbb{X}}}
\qquad
\mbox{ and } \mu\left(\{\Psi_j<\infty\}\right)=1,\, \quad \forall j=1,\ldots,K\right\}.
\end{align*}

Now, let $\mu\in\mathscr{X}$. Then, by the Skorokhod lemma \cite[Theorem 4.30]{Kallenberg}, we know that there exists a
probability space  $(\Omega_0,\CF^0,\PP_0)$ and a random variable
$\xi:\Omega_0\to \mathbb{X}\cap\mathbb{H}$ such that the law of $\xi$ coincides with $\mu$.
In particular,
the probability measure $\nu_\xi:\CB(\mathbb{X}\cap\mathbb{H})\to[0,1]$ induced by $\xi$ and given by
$$
\nu_\xi:\CB(\mathbb{X}\cap\mathbb{H})\ni A\mapsto \PP_0\big(\big\{ \omega:\xi(\omega)\in A\big\}\big)
$$
coincides with the probability measure $\mu$.

Due to the definition of $\mathbb{H}$, we know that $\xi$ is a progressively measurable stochastic process, in particular, $\xi:\Omega_0\times [0,T]\to H$ such that $\PP_0(\xi\in\mathbb{X}\cap\mathbb{H})=1$.
Let
$$
\CG^0_t:=\sigma \Big( \big\{\,\xi (s)\;\colon\; 0\le s\le t \, \big\}\cup \CN_0\Big),\quad t\in [0,T],
$$
where $\CN_0$ denotes the collection of zero sets of ${\mathfrak{A}}_0$.
Set $\mathfrak{A}_0:=(\Omega_0,\CF^0,(\CG_t^0)_{t\in[0,T]},\PP_0)$.

Next, we have to construct the  Wiener process and extend the probability space.
Now, let $\mathfrak{A}_1=(\Omega_1,\CF^1,(\CG_t^1)_{t\in[0,T]},\PP_1)$ be a probability space
where a cylindrical Wiener process $W$ on $U$ is defined,
and let $\mathfrak{A}_{\mu}$ be the product probability space of $\mathfrak{A}_0$ and $\mathfrak{A}_1$.
In particular, we set
\begin{align*}
&\Omega_\mu =  \Omega_0\times \Omega_1,
\\
&\CF_\mu = \CF^0\otimes \CF^1,
\\
&\CG^\mu_t = \CG^0_t\otimes \CG^1_t,\quad t\in [0,T],
\\
\mbox{and}\quad &\PP_\mu = \PP_0\otimes \PP_1.
\end{align*}
Here, we know that $\xi(t)$, $t\in [0,T]$, is independent of the increments $W(t)-W(s)$, $t>s$ and $\{\CG_t^\mu\}_{t\in [0,T]}$-progressively measurable.
Since $\mu\in\mathscr{X}$, we know that the process $\xi$ is in the set $\mathcal{X}_{\mathfrak{A}_\mu}(R)$.

Next, we have to verify that the family    of operators
$$
\left\{ \Vcal^\iota_{\mathfrak{A}_\mu,{W}}: \iota\in\NN\right\}
$$
is well-defined. However, this follows from Assumption (\ref{i}),
and since the process $\xi$ belongs to the set $\mathcal{X}_{\mathfrak{A}_\mu}(R)$. In fact, the $\mu$-dependence of the stochastic basis can be removed by lifting to the space of probability measures (path laws). We aim to find a fixed-point in the space of probability measures.

Now, fix $B\in\CB(\mathbb{X}\cap\mathbb{H})$. Then, let us define the mapping $\mathscr{V}_\iota$ that maps
the probability measure $\mu$, in other words, the probability measure $\nu_\xi:\CB(\mathbb{X}\cap\mathbb{H})\to[0,1]$ that is induced by $\xi$ to the probability measure $\nu_{\mathcal{V}_{\Afrak_\mu,W}^\iota(\xi)}:\CB(\mathbb{X}\cap\mathbb{H})\to[0,1]$ given by
$$
\nu_{\mathcal{V}_{\Afrak_\mu,W}^\iota(\xi)}(B):=\PP_\mu\left( \left\{ \omega\in\Omega_\mu: \Vcal^\iota_{\mathfrak{A}_\mu,W}(\xi(\omega))\in B\right\}\right),\quad \mathscr{V}_\iota(\mu):=\nu_{\mathcal{V}_{\Afrak_\mu,W}^\iota(\xi)}.
$$
Note, since $\mathbb{X}\cap\mathbb{H}$ is a complete metric space,
the space of probability measures over $\mathbb{X}\cap\mathbb{H}$ equipped with the
Prokhorov metric\footnote{Let $\mathscr{M}_1(X)$ be the set of Borel probability measures on the metric space $(X,d)$ equipped with the weak topology. Let $\nu,\mu\in\mathscr{M}_1(X)$. Then the weak topology can be metrised by the \emph{Prokhorov metric}, cf. \cite{dudley2002},
$$d_\alpha(\mu,\nu):=\inf\{\alpha>0: \mu(A)\le \nu(A_\alpha)+\alpha \mbox{ and }
\nu(A)\le \mu(A_\alpha)+\alpha \mbox{ for all } A\in\CB(X)\},
$$
were $A_\alpha:=\{ x\in X: d(x,A)<\alpha$\}.}   is complete.

The following points can be easily verified:
\begin{enumerate}[(1)]
\item\label{1p}  $\mathscr{X}$  is invariant under $\mathscr{V}_\iota$.
This follows directly from the Assumption (\ref{ii})
and the properties of the projection $\Pro_{n_\iota}$, see Remark \ref{projection}.

\item\label{02} {Due to the Assumption (\ref{iii}), we know that the operator $\mathcal{V}_{\mathfrak{A}_\mu,W}^\iota$ restricted to $\mathcal{X}_{\mathfrak{A}_\mu}(R)$	is uniformly continuous. Therefore, the mapping $\mathscr{V}_\iota$ restricted to $\mathscr{X}$ is continuous
	on $\mathscr{M}_1(\X \cap\mathbb{H})$ in the Prokhorov metric by \cite[Theorem 11.7.1]{dudley2002}. A detailed explanation is added in the Remark \ref{Remcts}.}

\item\label{3p} Note that by the Assumption (\ref{v}), $\mathcal{V}^\iota_{\mathfrak{A}_\mu,W}(\xi)\in \mathbb{H}$ for $\xi\in\mathcal{X}_{\mathfrak{A}_\mu}(R)$. We claim that $\mathscr{V}_\iota$ restricted to $\mathscr{X}$ is compact on $\mathscr{M}_1(\mathbb{X}\cap\mathbb{H})$. 
In particular, it maps bounded sets into compact sets.
In fact, we have to show that for all $\iota\in\NN$ and $\ep>0$ there exists a compact subset $K_\ep\subset\mathbb{X}\cap\mathbb{H}$ such that
$$
\nu_ {\mathscr{V}_\iota(\xi)}\left ((\mathbb{X}\cap\mathbb{H})\setminus K_\ep\right)< \ep, \quad \forall {\nu_\xi\in \mathscr{X}}\quad \mbox{and}\quad \nu_ {\mathscr{V}_\iota(\xi)}:=\mathscr{V}_\iota(\nu_\xi).
$$
However, by the Assumption (\ref{iv})
there exists a constant $R>0$ with
$$
\EE\big[\Vert\mathcal{V}^\iota_{\mathfrak{A}_\mu,W}(\xi)\Vert_{\mathbb{X}_1}^{m_0}\big]\le R,\quad \xi\in\mathcal{X}_{\mathfrak{A}_\mu}(R).
$$
Let $\tilde{R}>R^{1/{m_0}}\ep^{-1/{m_0}}$
and let $K_\ep:=\{\zeta \in\mathbb{X}\cap\mathbb{H}:\Vert \zeta \Vert_{\mathbb{X}_1}\le \tilde{R}\}$.
Due to the construction of the operator $\mathscr{V}_\iota$, we know
that the law is preserved. In particular,
\begin{align*}
&\PP_\mu\left(  \left\{
\zeta\in\mathbb{X}\cap\mathbb{H}\cap \mathcal{X}_{\mathfrak{A}_\mu}(R) :\Vert\mathcal{V}_{\mathfrak{A}_\mu,W}^\iota(\zeta)\Vert_{\mathbb{X}_1} \ge \tilde{R}\right\} \right)
\\&=\nu_ {\mathscr{V}_\iota(\xi)}\left ( {{\left\{
		\zeta\in K_\ep^c:\Vert \zeta\Vert_{\mathbb{X}_1} \ge \tilde{R}\right\}}}\right).
\end{align*}
Next, by Chebyshev's inequality, we get that
$$\nu_{ \mathscr{V}_\iota(\xi)}\big(\mathbb{X}\cap\mathbb{H}\setminus K_\ep\big)
=
\nu_ {\mathscr{V}_\iota(\xi)}\left( \left\{
		\zeta\in K_\ep^c:\Vert \zeta\Vert_{\mathbb{X}_1} \ge \tilde{R}\right\}\right)
< \ep.$$
Since $\mathbb{X}_1\hookrightarrow \mathbb{X}$ compactly,
we have proved the tightness in the Banach space $\X$.
\item\label{4p} Now, we have to show that the set $\mathscr{X}$ is a convex subset of $\mathscr{M}_1(\mathbb{X}\cap\mathbb{H})$.
Let $\nu,\mu\in\mathscr{X}$. We have to show that for any $\alpha\in(0,1)$ we have $\alpha \nu+(1-\alpha)\mu\in\mathscr{X}$. 
First, analysing the expectation with respect to $\Phi_1,\ldots,\Phi_K$, this follows by the linearity of the expectation value. Secondly, since $\nu,\mu\in\mathscr{X}$
we know that $\nu\left(\{\Psi<\infty\}\right)=1$ and $\mu\left(\{\Psi<\infty\}\right)=1$. Let $\alpha\in(0,1)$. Then
\begin{align*}
	\left(\alpha \nu+(1-\alpha)\mu\right)\left(\{\Psi<\infty\}\right)
=\alpha \underbrace{\nu\left(\{\Psi<\infty\}\right)}_{=1}
	+(1-\alpha)\underbrace{\mu\left(\{\Psi<\infty\}\right)}_{=1}=1.
\end{align*}
\end{enumerate}
In particular, the mapping $\mathscr{V}_\iota$ restricted to $\mathscr{X}$
satisfies all assumptions of the classical Schauder-Tychonoff theorem, see \cite[§ 7, Theorem 1.13, p.~148]{granas}:
\begin{lemma}[Schauder-Tychonoff]
Let $\mathcal{C}$ be a nonempty convex subset of a locally convex linear topological space $\mathcal{E}$, and let $\mathcal{D}:\mathcal{C}\to\mathcal{C}$ be a compact map, i.e., $\mathcal{D}(\mathcal{C})$ is contained in a compact subset of $\mathcal{C}$. Then $\mathcal{D}$ has a fixed-point.
\end{lemma}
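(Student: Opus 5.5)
The statement to be proved is the classical Schauder--Tychonoff lemma: a continuous map $\mathcal{D}$ of a nonempty convex set $\mathcal{C}$ of a locally convex space $\mathcal{E}$ into itself whose image lies in a compact $K\subset\mathcal{C}$ has a fixed point. (Continuity of $\mathcal{D}$ is implicit in ``compact map''.) I reduce it to Brouwer's theorem by finite-dimensional approximation of $\mathcal{D}$ on $K$ through Schauder projections.

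\textbf{Step 1: Schauder projections.} Fix a convex, balanced open neighbourhood $U$ of $0$ in $\mathcal{E}$. This is possible by local convexity. By compactness, choose $y_1,\dots,y_N\in K$ with $K\subset\bigcup_i (y_i+U)$. Let $p_U$ be the Minkowski functional of $U$; it is continuous, and $p_U(z)<1$ if and only if $z\in U$. Define
\[
\lambda_i(x):=\max\{0,1-p_U(x-y_i)\},\qquad P_U(x):=\frac{\sum_i\lambda_i(x)y_i}{\sum_i\lambda_i(x)},\quad x\in K .
\]
The denominator is positive on $K$, so $P_U$ is continuous. It takes values in $\mathrm{co}\{y_1,\dots,y_N\}$, which lies in $\mathcal{C}$ by convexity. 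Moreover $P_U(x)-x\in U$, because only indices with $x-y_i\in U$ carry positive weight and $U$ is convex.

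\textbf{Step 2: Brouwer on a simplex.} Let $C_U:=\mathrm{co}\{y_1,\dots,y_N\}$. It is a compact convex subset of a finite-dimensional subspace, hence homeomorphic to a closed ball. The map $f_U:=P_U\circ\mathcal{D}|_{C_U}$ is a continuous self-map of $C_U$, since $\mathcal{D}(C_U)\subset K$. Brouwer's theorem gives $x_U\in C_U$ with $P_U(\mathcal{D}x_U)=x_U$. Consequently $\mathcal{D}x_U-x_U\in U$, up to the sign, which is harmless because $U$ is balanced.

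\textbf{Step 3: passage to the limit.} Direct the neighbourhoods $U$ by reverse inclusion. The net $(\mathcal{D}x_U)_U$ lies in the compact set $K$, so it has a subnet converging to some $x^\ast\in K\subset\mathcal{C}$. The difference $x_U-\mathcal{D}x_U$ tends to $0$, so $x_U\to x^\ast$ along the same subnet. Continuity of $\mathcal{D}$ then gives $\mathcal{D}x^\ast=\lim\mathcal{D}x_U=x^\ast$.

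The main obstacle is Step 3. Using nets is unavoidable, since $\mathcal{E}$ need not be metrisable, and continuity of $\mathcal{D}$ must hold in the relative topology of $\mathcal{C}$. In the present application, $\mathcal{E}$ is the space of signed measures on $\mathbb{X}\cap\mathbb{H}$ with the weak topology, and $\mathscr{X}$ is metrisable by the Prokhorov metric. There one may work with sequences, taking $U=U_k$ with $U_k$ shrinking, and invoke sequential compactness of the closure of $\mathscr{V}_\iota(\mathscr{X})$ as obtained in item (3) above.
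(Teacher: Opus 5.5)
Your proof is correct. It is the standard Schauder-projection, Brouwer and compactness argument behind the Granas--Dugundji theorem (\S 7, Theorem 1.13), which the paper quotes without proof. State explicitly that $\mathcal{E}$ is Hausdorff, a standing convention in that reference: you use it tacitly in Step 2 to give $C_U$ its Euclidean topology, and indispensably in Step 3 for uniqueness of limits (the statement fails for the indiscrete topology on $\mathbb{R}$ with $\mathcal{C}=\mathbb{R}$ and $\mathcal{D}(x)=x+1$), and it does hold for the weak topology on signed measures used in the application.
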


Hence, for any $\iota\in\NN$ there exists a probability measure $\nu^\ast_\iota\in\mathscr{X}$ such that $$\mathscr{V_\iota}(\nu^\ast_\iota)=\nu^\ast_\iota.$$

\item
Note that, the tightness argument in the point (\ref{3p}) of the Step (II) is independent of $\iota$, thus the family of measures
$$
\left\{ \nu^\ast_\iota:\iota\in\NN\right\}
$$				
is tight, therefore there exists a subsequence $\{\iota_j:j\in\NN\}$ and a Borel probability measure $\nu^\ast$ such that
$ \nu_{\iota_j}^\ast\to 	 \nu^\ast$, as $j\to\infty$.	
In this step, we will construct from the family of probability measures $\{\nu^\ast_{\iota_j}:j\in\NN\}$ and $\nu^\ast\in\mathscr{X}$, a filtered probability space $\mathfrak{A}^\ast$, a Wiener process
${W}^\ast$, a
progressively measurable process $w^\ast$, and
a family of progressively measurable processes  $\{ w_{\iota_j}^\ast:j\in\NN\}$ that
are $\PP^\ast$-a.s. contained in $\mathbb{X}\cap\mathbb{H}$
over $\mathfrak{A}^\ast$ such that these objects have probability measures $\{\nu^\ast_{\iota_j}:j\in\NN\}$ and $\nu^\ast\in\mathscr{X}$.

By the Skorokhod lemma \cite[Theorem 4.30]{Kallenberg},
there exists a probability space $\mathfrak{A}^\ast_0=(\Omega^\ast_0,\CF^{\ast}_0,\PP^\ast_0)$ and a  sequence of $\mathbb{X}$-valued random variables $\{ {w}^\ast_{\iota_j}:j\in\NN\}$
and ${w}^\ast$ where
the random variable $w^\ast_{\iota_j}:\Omega^\ast_0\to\mathbb{X}\cap\mathbb{H}$ has the  law $\nu^\ast_{\iota_j}$ in $\mathbb{X}\cap\mathbb{H}$.
In addition, by tightness and the Skorokhod lemma, we know that 
\begin{equation*}
{w}^\ast_{\iota_j}\to{w}^\ast,\quad \mbox{ as  $j\to \infty$ }\quad  {\P}^\ast_0\text{-a.s.,}
\end{equation*}
on $\X$.
Moreover, let us introduce the filtration $\mathcal{G}^{0,\ast}_t=(\mathcal{F}_t^{\ast,0})_{t\in[0,T]}$  given by
$$
\CF^{\ast,0}_t:=\sigma \left( \left\{\,(w _{\iota_j}^\ast(s),w^\ast(s))\;\colon\; 0\le s\le t, \,j\in\NN \right\}\cup \CN_0^\ast\right),\quad t\in [0,T],
$$
where $\CN_0^\ast$ denotes the collection of zero sets of ${\mathfrak{A}}_0^\ast$.

Next, similarly to above, let us construct the Wiener process.
Let 
$$
\mathfrak{A}_1^\ast=\left(\Omega_1^\ast,\CF_1^\ast,(\mathcal{G}_t^{1,\ast})_{t\in [0,T]},\PP_1^\ast\right)
$$
be a filtered probability space with a cylindrical Wiener process $W^\ast$ on $U$ being adapted to the filtration $(\mathcal{G}_t^{1,\ast})_{t\in [0,T]}$. 				Let $\mathfrak{A}^\ast:=\mathfrak{A}_0^\ast\times \mathfrak{A}_1^\ast$.	
In particular, we put
\begin{align*}
&\Omega ^\ast=  \Omega_0^\ast\times \Omega^\ast_1,
\\
&\CF^\ast = \CF_0^\ast\otimes \CF_1^\ast,
\\
&\CG^\ast_t = \CG^{0,\ast}_t\otimes \CG^{1,\ast}_t,\quad t\in [0,T],
\\
\mbox{and}\quad &\PP^\ast = \PP^\ast_0\otimes \PP_1^\ast.
\end{align*}
On $\mathfrak{A}^\ast$, $\mathcal{X}_{\mathfrak{A}^\ast}(R)$ can be defined in the obvious way, as well as the operators $\mathcal{V}^{\iota_j}_{\mathfrak{A}^\ast,W^\ast}$
and ${\mathcal{V}}_{\mathfrak{A}^\ast,W^\ast}$

\item

In this step, we construct a $\P^\ast$-a.s. piecewise constant and $\{\CG^\ast_t\}_{t\in [0,T]}$-progressively measurable process that is a fixed-point for the operator $\mathcal{V}_{\mathfrak{A}^\ast,W^\ast}^{\iota_j}$.

Since $\nu^\ast_{\iota_j}\in \mathscr{X}$, the process $w^\ast_{\iota_j}\in\mathcal{X}_{\mathfrak{A}^\ast}(R)$ and, hence, 
$\mathcal{V}_{\mathfrak{A}^\ast,W^\ast}^{\iota_j}(w^\ast_{\iota_j})$ is well-defined.
Since $\mathscr{V}_{\iota_j}(\nu^\ast_{\iota_j})=\nu^\ast_{\iota_j}$, $\Law(w^\ast_{\iota_j})=\nu^\ast_{\iota_j}$.
However, we do not know if
the process $w^\ast_{\iota_j}$ satisfies
\begin{align*}
\PP^\ast\left(
\mathcal{V}_{\mathfrak{A}^\ast,W^\ast}^{\iota_j}\big(w^\ast_{\iota_j}\big) (s) =w^\ast_{\iota_j}(s)\right)=&1, \quad \mbox{for} \quad 0\le s\le T
.
\end{align*}

In this step, we are going to construct a fixed-point for the operator $\mathcal{V}_{\mathfrak{A}^\ast,W^\ast}^{\iota_j}$.
Let us define a new process by induction. To start with, let
\begin{equation}\label{nummer1}
w^\ast_{\iota_j,1}(s) :=  \begin{cases}
	w^\ast_{\iota_j}(s),& \mbox{ if } 0\le s<t_1,
	\\
	\left(\mathcal{V}_{\mathfrak{A}^\ast,W^\ast}^{\iota_j}(w^\ast_{\iota_j})\right)(s),& \mbox{ if } t_1\le s\le T,
\end{cases}
\end{equation}
and
\begin{equation}\label{nummer2}
w^\ast_{\iota_j,2}(s) :=\begin{cases}
	w^\ast_{\iota_j,1}(s),& \mbox{ if } 0\le s<t_2,
	\\
	\left(\mathcal{V}_{\mathfrak{A}^\ast,W^\ast}^{\iota_j}(w^\ast_{\iota_j,1})\right)(s),& \mbox{ if } t_2\le s\le T.
\end{cases}
\end{equation}
Now, having defined $w^\ast_{\iota_j,k}$, let
\begin{equation}\label{nummerk}
w^\ast_{\iota_j,k+1}(s) :=  \begin{cases}
	w^\ast_{\iota_j,k}(s),& \mbox{ if } 0\le s<t_{k+1},
	\\
	\left(\mathcal{V}_{\mathfrak{A}^\ast,W^\ast}^{\iota_j}(w^\ast_{\iota_j,k})\right)(s),& \mbox{ if } t_{k+1}\le s\le T,
\end{cases}
\end{equation}
where $t_k\in\pi_n$ are dyadic time points.
Let us put $  w^\ast_{\iota_j,0}(s)=w_0^\ast$, for $0\le s\le T$, where $ w_0^\ast$ is a $\Gcal^\ast_0$-measurable version of $w_0$, and
\begin{equation}\label{definfty}
w_{\iota_j,\infty}^\ast(s) :=   w^\ast_{\iota_j,k}(s),\quad \mbox{if}\quad t^{\iota_j}_{k-1}\le s< t^{\iota_j}_{k},\,\,k=1,\ldots, 2^{\iota_j}.
\end{equation}
We claim that the process $w^\ast_{\iota_j,\infty}$ satisfies
\begin{equation*}
\PP^\ast\left(
\mathcal{V}_{\mathfrak{A}^\ast,W^\ast}^{\iota_j}\big(w^\ast_{\iota_j,\infty}\big) (s) =w^\ast_{\iota_j,\infty }(s)\right)=1, \quad \mbox{for} \quad 0\le s\le T
.
\end{equation*}
Note that
by the definition of $\Pro_{\iota_j}$ on $[0,t_1^{\iota_j})$,
the process on $[0,t_{1}^{\iota_j})$ is defined by the initial data.
In fact, we have
for $0\le s< t^{\iota_j}_1$
$$
\mathcal{V}_{\mathfrak{A}^\ast,W^\ast}^{\iota_j}\big(w^\ast_{\iota_j,\infty}\big)(s)= w^\ast_0.
$$
By equation \eqref{definfty} and equation \eqref{nummer1} we have $w^\ast_{\iota_j,\infty}(s)=w^\ast_{\iota_j,1}(s)=w^\ast_0$, 
for $0\le s< t^{\iota_j}_1$.
In particular, the process on $[0,t_{1}^{\iota_j})$ is defined by the initial data
and we have  $\PP^\ast$-a.s.
$$\mathcal{V}_{\mathfrak{A}^\ast,W^\ast}^{\iota_j}\big(w^\ast_{\iota_j,\infty}\big)(s)=w^\ast_{\iota_j,\infty}(s)
, \quad \mbox{for} \quad 0\le s<t^{\iota_j}_1.
$$
At time $t^{\iota_j}_1$, we have by equation \eqref{definfty}
and equation \eqref{nummer1}, 
$$
\mathcal{V}_{\mathfrak{A}^\ast,W^\ast}^{\iota_j}\big(w^\ast_{\iota_j,\infty }\big)(t^{\iota_j}_1)=\mathcal{V}_{\mathfrak{A}^\ast,W^\ast}^{\iota_j}  \big(w^\ast_{\iota_j}\big)(t^{\iota_j}_1)=w^\ast_{\iota_j,1}(t^{\iota_j}_1).
$$
However, we have $ w^\ast_{{\iota_j },\infty}(t^{\iota_j}_1)=w^\ast_{\iota_j,1}(t^{\iota_j} _1)$.
Let us analyse what is happening at the next time interval $[t^{\iota_j }_1,t^{\iota_j }_2)$. {Here the process} is constant and equals   $\PP^\ast$-a.s. to the value at $t^{\iota_j }_1$, i.e.
$$
\mathcal{V}_{\mathfrak{A}^\ast,W^\ast}^{\iota_j}\big( w^\ast_{{\iota_j },\infty}\big)(s)
=w^\ast_{\iota_j ,1}(s),  \quad \mbox{for} \quad t^{\iota_j}_1\le s<t^{\iota_j}_2.
$$
Note, also that   $\PP^\ast$-a.s. we have $w^\ast_{\iota_j ,1}(s)=w^\ast_{\iota_j ,\infty}(s)$, for $ t^{\iota_j}_1\le s<t^{\iota_j}_2$, and hence
$$
\PP^\ast\left(\mathcal{V}^{\iota_j }_{\MA^\ast,W^\ast}\big(w^\ast_{\iota_j ,\infty }\big)(s)= w^\ast_{\iota_j ,\infty }(s)\right) =1,  \quad \mbox{for} \quad t^{\iota_j}_1\le s<t^{\iota_j}_2.
$$
Let us analyse what happens in $t^{\iota_j}_2$. By equation \eqref{definfty}, we have
$$
\mathcal{V}_{\mathfrak{A}^\ast,W^\ast}^{\iota_j}\big(w^\ast_{\iota_j,\infty}\big)(t^{\iota_j}_2)=\mathcal{V}_{\mathfrak{A}^\ast,W^\ast}^{\iota_j}\big(w^\ast_{\iota_j,1}\big)(t^{\iota_j}_2).
$$
From the equation \eqref{nummer2} of $ w^\ast_{\iota_j,2}$,  we have 
$$
\mathcal{V}_{\mathfrak{A}^\ast,W^\ast}^{\iota_j}\big(w^\ast_{\iota_j,\infty}\big)(t^{\iota_j}_2)=w^\ast_{\iota_j,2}(t^{\iota_j}_2).
$$
Now, we can proceed by induction. Let us assume that in $[0,t^{\iota_j}_k)$ we have shown that 
\begin{equation*}
\PP^\ast\left(
\mathcal{V}_{\mathfrak{A}^\ast,W^\ast}^{\iota_j}\big(w^\ast_{\iota_j,\infty }\big) (s) =w^\ast_{\iota_j,\infty}(s)\right)=1, \quad \mbox{for} \quad 0\le s\le t^{\iota_j}_k
.
\end{equation*}
Then,  we have by equation \eqref{definfty} and equation \eqref{nummerk} for $t^{\iota_j}_k\le s< t^{\iota_j}_{k+1}$
$$
\mathcal{V}_{\mathfrak{A}^\ast,W^\ast}^{\iota_j}\big(w^\ast_{\iota_j,\infty}\big) (s) =\mathcal{V}_{\mathfrak{A}^\ast,W^\ast}^{\iota_j}\big(w^\ast_{\iota_j,k-1}\big)(s)=w^\ast_{\iota_j,k}(s).
$$
By tackling one interval after another, one can show that
\begin{equation*}
	\PP^\ast\left(
	\mathcal{V}_{\mathfrak{A}^\ast,W^\ast}^{\iota_j}\big(w^\ast_{\iota_j,\infty }\big) (s) =w^\ast_{\iota_j,\infty}(s)\right)=1, \quad \mbox{for} \quad 0\le s\le T
	.
\end{equation*}

\del{ \item 

We have to identify the limit
of the sequence $\{w^\ast_{\iota_j,\infty}:j\in\NN\}$.	
Note, the definition of given by\begin{eqnarray}\label{hatdefined-1} \Pro_n(\xi)(s):=
	\bcase w_n, &
	\mbox{ if } s\in [0,T2^{-n}),
	\\
	\frac{2^n}{T}\int_{\phi_n(s)-T2^{-n}}^{\phi_n(s)} \xi(r)\: dr, &\mbox{ if
	} s\geq T2^{-n}.\ecase
\end{eqnarray}
and 
$$
\mathcal{V}^\iota _{\mathfrak{A},W}(\xi):=  (\Pro_{n_\iota} \circ\mathcal{V}_{\mathfrak{A},W})(\xi),\quad \xi\in \mathcal{X}(\mathfrak{A}).
$$

}

\item

Next, we verify several statements to pass to the limit. We point out that the same construction used for $w^\ast_{\iota_j,\infty}$ can be carried out on the original probability space $\mathfrak{A}$.
The resulting process will be denoted by $w_{\iota_j,\infty}$.
Due to the construction and the properties of the projection, it is straightforward to verify that the laws are preserved. In particular, we have $\mathscr{L}(w_{\iota_j,\infty}) = \mathscr{L}(w^\ast_{\iota_j,\infty})$.

\del{\begin{claim}\label{claim222}
	We claim that:
	\begin{enumerate}[(1)]
		\item there exists a constant $C>0$ such that 
		\[
		\sup_{j\in\mathbb{N}} \mathbb{E}^\ast \left[ \| w^\ast_{\iota_j,\infty} \|_{\mathbb{X}}^{m_0} \right] \leq C,
		\]
	\end{enumerate}
\end{claim}

\begin{proof}[Proof of Claim \ref{claim222}]
	Since $\{ w^\ast_{\iota_j,\infty} \}_{j\in\mathbb{N}} \subset \mathcal{X}(\mathfrak{A}^\ast)$ and $\mathcal{X}(\mathfrak{A}^\ast)$ is bounded in $\mathbb{X}$, we conclude from the application of Skorokhod’s lemma that
	\[
	\mathbb{E} \left\| w_{\iota_j,\infty} \right\|_{\mathbb{X}}^{r} = \mathbb{E}^\ast \left\| w^\ast_{\iota_j,\infty} \right\|_{\mathbb{X}}^{r},
	\]
	for all $r\in [1, m_0]$. Therefore, under assumption (iv), we obtain
	\[
	\sup_{j} \mathbb{E}^\ast \left\| w^\ast_{\iota_j,\infty} \right\|_{\mathbb{X}}^{m_0} \leq R \tilde{C} =: C,
	\]
	where $\tilde{C}>0$ is a constant satisfying $\| \cdot \|_{\mathbb{X}} \leq \tilde{C} \| \cdot \|_{\mathbb{X}_1}$, which exists due to the compact and dense embedding $\mathbb{X}_1 \hookrightarrow \mathbb{X}$.
	As a consequence, the family $\{ \| w^\ast_{\iota_j,\infty} \|_{\mathbb{X}}^r \}$ is uniformly integrable for any $r \in (1, m_0]$ with respect to $\mathbb{P}^\ast$.
	Next, we demonstrate that $\{ w_{\iota_j}^\ast : j\in\mathbb{N} \}$ is a Cauchy sequence in $\mathbb{X}$.}
Recall that
\[
\mathcal{V}^{\iota_j}_{\mathfrak{A},W}(\xi) :=  (\Pro_{n_{\iota_j}} \circ \mathcal{V}_{\mathfrak{A},W})(\xi), \quad \xi \in \mathcal{X}_{\mathfrak{A}}(R),
\]
where $ \Pro_{n}$ denotes a shifted Haar projection on the dyadic grid given by  $t^n_k= T\frac{k}{ 2^n}$, $k=0,1,2,\ldots,2^n$. For more details, see Appendix \ref{app:haar-system}.
That means in our situation, the fixed-point ${w}_{\iota_j,\infty}^\ast$ can be written as follows:
	\begin{align}\label{a1}\nonumber
		\big( {w}_{\iota_j,\infty}^\ast(t),\phi \big) 
		&=\big( x_{\iota_j},\phi\big) +\int_{0}^{t}\la  A ( u_{\iota_j}^\ast(s)),\phi \ra \,ds+\int_{0}^{t}\la F_1 ( u_{\iota_j}^\ast(s))+ F_2 ( u_{\iota_j}^\ast(s)),\phi \ra \,ds
	\\&\quad	+\int^t_0 \big( {\Sigma( {u}_{\iota_j}^\ast)}\,dW(s),\phi\big) , 
		\ \ t\in [0,T],\ \ \phi\in V, 
	\end{align} 
	and $u_{\iota_j}^\ast $ is
	defined by 
	\begin{equation*}
		 u_{\iota_j}^\ast:=
		\left\{
		\begin{aligned} &w_{\iota_j,\infty}^\ast, &&
		\mbox{ if } s\in [0,T2^{-n}),
		\\
		&\frac{2^n}{T}	\int_{t^n_{k}-t^n_1}^{t^n_k} w_{\iota_j,\infty}^\ast(r)\: dr, &&\mbox{ if
		} s \geq  T2^{-n} .
	\end{aligned} 
\right.
	\end{equation*}
	{The $H$-valued process} $u_{\iota_j}^\ast$ is
	piecewise constant {adapted and}  hence progressively measurable.
	Between the grid points, equation \eqref{a1} is linear and has values in $V$, therefore,
	${w}_{\iota_j,\infty}^\ast$ is well defined in $V^\ast$ for all $j\in{\mathbb{N}}$. One should note that we are denoting the sequence $n_{\iota_j}$ by $\iota_j$, for the sake of convenience.

	Let us collect the information known about the process ${w}_{\iota_j,\infty}^\ast$:
\begin{itemize}
	\item  {For any $j\in\NN$,  $w^\ast_{\iota_j,\infty}$ is {$\PP^\ast$}-a.s.\ a fixed-point of  $	\mathcal{V}_{\mathfrak{A}^\ast,W^\ast}^{\iota_j}$. In particular,  $w^\ast_{\iota_j,\infty}$ solves  {$\PP^\ast$}-a.s.\ the following equation
		\begin{equation} \label{8.1}
			\left\{\begin{aligned}
				d w^\ast_{\iota_j,\infty}(s)&=A^{{\iota_j}}(w^\ast_{\iota_j,\infty}(s))\,ds+F_1^{\iota_j}(w^\ast_{\iota_j,\infty}(s))+F_2^{\iota_j}(w^\ast_{\iota_j,\infty}(s))\,ds\\&\quad +\big(\Sigma_1^{{\iota_j}}(w^\ast_{\iota_j,\infty}(s)) +\Sigma_2^{{\iota_j}}(w^\ast_{\iota_j,\infty}(s))\big)\, d W(s)  ,
				\\
				w^\ast_{\iota_j,\infty}(0)&=x_{\iota_j},
			\end{aligned}\right.
		\end{equation}
		where $A^{{\iota_j}}(\xi)=A(\Pro_{{\iota_j}}\xi)$, $F_i^{\iota_j}(\xi)=F_i(\Pro_{\iota_j}\xi )$ and 	$\Sigma_i^{{\iota_j}} (\xi):=  \Sigma_i(\Pro_{{\iota_j}}\xi)$, for $i\in\{1,2\}$.}	
	
	\item There exists a constant $C>0$ such that
	\[
	\mathbb{E}^{\ast} \big[\left\| \mathcal{V}_{\mathfrak{A}^{\ast},W^{\ast}}(\xi) \right\|_{\mathbb{X}_1} \big]\leq C, \quad \xi \in \mathcal{X}_{\mathfrak{A}^{\ast}}(R).
	\]
	In particular, the set
	\[
	\left\{ \mathcal{V}_{\mathfrak{A}^{\ast},W^{\ast}}(\xi) : \xi \in \mathcal{X}_{\mathfrak{A}^{\ast}}(R) \right\}
	\]
	is tight in $\mathbb{X}$.
	
	\item 
	For every $\varepsilon > 0$, there exists a compact subset $K_\varepsilon \subset \mathbb{X}$ such that
	\[
	\mathbb{P}^{\ast} \left( \xi \notin K_\varepsilon \right) \leq \varepsilon.
	\]
	Now, we decompose
	\[
	\mathbb{E}^{\ast}\Big[\big\|\Pro_{{\iota_j}}\xi - \xi \big\|^p_{\mathbb{X}}\Big]
	\le 
	\mathbb{E}^{\ast} \Big[ \mathds{1}_{K_\varepsilon} \big\| \Pro_{{\iota_j}}\xi - \xi \big\|^p_{\mathbb{X}} \Big]
	+
	\mathbb{E}^{\ast} \Big[ \mathds{1}_{\mathbb{X}\setminus K_\varepsilon} \big\| \Pro_{{\iota_j}}\xi - \xi \big\|^p_{\mathbb{X}} \Big].
	\]
	By compactness of $K_\varepsilon$ and by Proposition \ref{prop:haar-uniform}, we know for sufficiently large $n_{\iota_j}$, that
	\[
	\big\| \Pro_{{\iota_j}} \xi  - \xi \big\|_{\mathbb{X}} \leq \varepsilon^{1/p}, \quad \forall\, \xi \in K_\varepsilon.
	\]
	Thus,
	\begin{align*}
	\mathbb{E}^{\ast}\Big[ \big\| \Pro_{{\iota_j}}\xi - \xi \big\|^p_{\mathbb{X}}\Big]
	\leq 
	\varepsilon +\big(  \mathbb{P}^{\ast} \left( \xi \notin K_\varepsilon \right)\big)^{\frac{r-p}{r}} \left\{\mathbb{E}^{\ast} \Big[\big\| \Pro_{{\iota_j}}\xi - \xi \big\|^r_{\mathbb{X}}\Big]\right\}^{\frac{p}{r}}
	\leq 2\varepsilon,
\end{align*} for $m>r\geq p$, where we used boundedness of moments for the second term in the above inequality from Step (I).	
\end{itemize}
\newcommand{\Xw}{\widetilde{w}}
\newcommand{\Xwt}{\widetilde{w}}
\newcommand{\Xwtt}{\widetilde{\widetilde{w}}}

It follows from above that the law of the random variables  $\{{w}_{\iota_j,\infty}^\ast:j\in\NN\}$ over $\mathfrak{A}^\ast$ 
is tight in $\mathbb{X}$.  In addition, we know that 
$\{ w _{{\iota_j}}\}_{j\in\mathbb{N}}$ is a sequence of $\mathbb{X}\cap \mathbb{H}$-valued random variables
such that $\left\{\Law\big( w _{{\iota_j}}\big):j\in\N\right\}$ is weakly convergent to a probability measure $\mathscr{P}^\ast $ on $\mathbb{H}$.

\item 
In the next step, we apply the Skorokhod representation theorem.
To be more precise, by the Skorokhod theorem, there exists a probability space $\widetilde{\mathfrak{A}}:={\tiny }(\widetilde{\Omega}, \widetilde{\mathcal{F}}, \widetilde{\mathbb{P}})$ and $\X\cap \mathbb{H}$-valued random variables $\{\Xwt_{{\iota_j}}\}_{
	j\in\N}$, and $\Xwt,$ such that 
\begin{align}\label{assapp}
	\mathscr{L}( \Xwt ) &=\mathscr{P}^\ast ,
	\\ 
	\mathscr{L}\big( w _{{\iota_j}}\big)&=\mathscr{L}\big(\Xw_{{\iota_j}}\big),\ j \in \mathbb{N},\label{assapp5}
\end{align}
and 
\begin{align}
	\Xwt _{{\iota_j}} &\rightarrow  \Xwt  \mbox{  as }j \rightarrow \infty, \mbox{ $\wi{\mathbb{P}}$-a.s.}\label{assapp2}
\end{align}
Additionally, we know from \eqref{a1} that for all $j\in\NN$ we have for all $t\in [0,T]$, $\wi\PP$-a.s.
{\begin{align*} \nonumber 
		\big( \wi{w}_{{\iota_j}} (t),\phi \big)
	&=\big( x_{{\iota_j}}, \phi\big)+\int_{0}^{t} 
\big\la 	A ( \Pro_{{\iota_j}}\wi{w}_{{\iota_j}}(s))+F_1(\Pro_{\iota_j}(\wi{w}_{\iota_j}))+F_2(\Pro_{\iota_j}(\wi{w}_{\iota_j})),\phi\big\ra \, ds
	\\	&\quad +\bigg( \int^t_0 \big(\Sigma_1(\Pro_{{\iota_j}}	\wi{w}_{{\iota_j}})(s)+\Sigma_2((\Pro_{{\iota_j}}	\wi{w}_{{\iota_j}})(s)) \big)\,d\wi{W}(s),\phi\bigg) ,
	\label{a}
	\quad \,\phi\in V.
\end{align*} }
Despite the fact that $\wi{w} $ is already given, we have to construct a Wiener process $\wi{ W}$ such that $\wi{w}$ can be written as a solution of an SPDE on a probability space with a Wiener process. The difficulty is that we do not necessarily have full information available on this space. To obtain a Wiener process with the same characteristics as the original one, we reconstruct the part determined by $\wi{w}$ and generate the missing (unknown) part on an extended probability space.

\del{ 	\begin{claim}\label{claim}
		We need to show the following:
		\begin{enumerate}
			\item there exists a positive constant $C$ such that 
			\begin{align*}	
				\sup_{j\in\N}\wi{\EE}\Big[\big\|\wi{w}_{{\iota_j}}\big\|_\X^m\Big]\leq C;
			\end{align*}
			\item for any $r\in (1,m)$, we have 
			\begin{align*}	
				\lim_{j\to\infty}\wi{\EE}\Big[\big\|\wi{w}_{{\iota_j}}-\wi{w}\big\|_\X^r\Big]=0.
			\end{align*}
		\end{enumerate}
	\end{claim}
	\begin{proof}[Proof of Claim \ref{claim}]
		From the previous steps, we know that the operator $\Vcal_{\wi{\MA},\wi{W}}^{\iota_j}$ is having a fixed-point $\wi{w}_{{\iota_j}}$, which belongs to $\Vcal_{\wi{\MA},\wi{W}}^{\iota_j}(\Xcal_R(\wi{\MA}))$. Since $\{\wi{w}_{{\iota_j}}\}_{j\in\N}\subset \Vcal_{\wi{\MA},\wi{W}}^{\iota_j}(\Xcal_R(\wi{\MA}))$, we know from Assumption (\ref{iv}), that for any $1<r< m$  there exists some positive constant $R$ such that 
		\begin{align*}	
			\wi{\EE}\Big[\big\|\wi{w}_{{\iota_j}}\|_\X^r\Big]\leq R,\ \text{ for every }\ j\in\N,
		\end{align*}which completes the first part of this claim. 
		
		From the above expression, we conclude that $\Big\{\big\|\wi{w}_{{\iota_j}}\big\|_\X^r\Big\}_{j\in\N}$ is uniformly integrable for any $r\in(1,m)$, with respect to the probability measure $\wi{\PP}$. Finally, using the convergence obtained in \eqref{assapp2}, and Vitali convergence theorem, we conclude that 
		\begin{align*}	
			\lim_{j\to\infty}\wi{\EE}\Big[\big\|\wi{w}_{{\iota_j}}-\wi{w}\big\|_\X^r\Big]=0, 
		\end{align*}for any $r\in (1,m)$.
	\end{proof}
	
}		
To conclude, in this  step we will  construct an extension\footnote{For the definition of the extension, see Appendix \ref{extension}, Definition \ref{Definition_extension}.} 
$$\widetilde{\widetilde{\mathfrak{A}}}$$
 of the original probability space $\widetilde{\mathfrak{A}}$ and a Wiener process 
$\widetilde{\widetilde{W}}$ on $\widetilde{\mathfrak{A}}\times \widetilde{\widetilde{\mathfrak{A}}}$ such that $\widetilde{w}_{n_{\iota_j}}$ solves
	\newcommand{\wwt}{\widetilde{w}}
{\begin{align*} \nonumber \big( \wi{w}_{{\iota_j}} (t),\phi \big)
		&=\big( x_{{\iota_j}}, \phi\big)+\int_{0}^{t} 
		\big\la 	A ( \Pro_{{\iota_j}}\wi{w}_{{\iota_j}}(s))+F_1(\Pro_{\iota_j}(\wi{w}_{\iota_j}))+F_2(\Pro_{\iota_j}(\wi{w}_{\iota_j})),\phi\big\ra\, ds
		\\	&\quad +\bigg( \int^t_0 \big(\Sigma_1(\Pro_{{\iota_j}}	\wi{w}_{{\iota_j}})(s)+\Sigma_2((\Pro_{{\iota_j}}	\wi{w}_{{\iota_j}})(s)) \big)\,d\wi{\wi{W}}(s),\phi\bigg), 
		 \end{align*} $t\in[0,T],\  \phi\in V$.}
\begin{remark}\label{wtilde}
	$$
	\pi \wi{\wi{\omega}}=\wi \omega \quad \text { for } \quad \wi {\wi{\omega}}=\left(\wi \omega, \omega^{\prime}\right) \in \wi{\wi{\Omega}}.
	$$
	In particular, the process $\wi{w}$  remains unchanged under this extension. However, for consistency and in order to clearly identify the corresponding probability space in the sequel, we denote $\wi{w}$  on the enlarged space by $\wi{\wi{w}}$. 
\end{remark}
	
	First, observe that for each $j\in\mathbb{N}$ the process $\widetilde{M}_{\iota_j}(t)$, for $t\in[0,T]$ defined by 
	\begin{align*}
		\widetilde{M}_{\iota_j}(t)
		:=\widetilde{w}_{\iota_j}(t)-x_{\iota_j}
		-\int_0^t \Big(A (\Pro_{\iota_j}\widetilde{w}_{\iota_j}(s))+F_1 (\Pro_{\iota_j}(\wi{w}_{\iota_j}))+F_2 (\Pro_{\iota_j}(\wi{w}_{\iota_j}))\Big)\,ds
	\end{align*}
	is an $H$-valued square integrable martingale with respect to the filtration
	\begin{align*}
		\widetilde{\mathcal{F}}_{\iota_j}(t)
		=\sigma\!\left\{\,\widetilde{w}_{\iota_j}(s): s\le t \right\},
		\qquad t\in[0,T].
	\end{align*}
	Its quadratic variation is given by
	\begin{align*}
		\big\langle\!\big\langle \widetilde{M}_{\iota_j}(t) \big\rangle\!\big\rangle
		=\int_0^t 
		\bigl(\Sigma(\Pro_{\iota_j}\widetilde{w}_{\iota_j}(s))\,Q^{\frac{1}{2}}\bigr)
		\bigl(\Sigma(\Pro_{\iota_j}\widetilde{w}_{\iota_j}(s))\,Q^{\frac{1}{2}}\bigr)^{*}
		\,ds,
		\qquad t\in[0,T],
	\end{align*}where $\Sigma=\Sigma_1+\Sigma_2$.
\\	This follows directly from \eqref{assapp5} and the fact that the process $M_{\iota_j}(t)$, for $t\in[0,T]$ defined by
	\begin{align*}
		M_{\iota_j}(t)
		:=w_{\iota_j}(t)-x_{\iota_j}
		-\int_0^t\Big( A(\Pro_{\iota_j}w_{\iota_j}(s))+F_1 (\Pro_{\iota_j}({w}_{\iota_j}))+F_2(\Pro_{\iota_j}w_{\iota_j}(s))\Big)\,ds,
	\end{align*}
	is a $H$-valued square integrable martingale with respect to the filtration
	\begin{align*}
		\mathcal{F}_{\iota_j}(t)
		=\sigma\!\left\{ w_{\iota_j}(s): s\le t \right\},
		\qquad t\in[0,T],
	\end{align*}
	having quadratic variation
	\begin{align*}
		\big\langle\!\big\langle M_{\iota_j}(t)\big\rangle\!\big\rangle
		=\int_0^t 
		\bigl(\Sigma(\Pro_{\iota_j}w_{\iota_j}(s))\,Q^{\frac{1}{2}}\bigr)
		\bigl(\Sigma(\Pro_{\iota_j}w_{\iota_j}(s))\,Q^{\frac{1}{2}}\bigr)^{*}
		\,ds,
		\ t\in[0,T],
	\end{align*}where $\Sigma=\Sigma_1+\Sigma_2$.

\noindent 	
	This follows from identity \eqref{8.1} and the construction of the operator 
	$\mathcal{V}^{\iota_j}_{\mathfrak{A},W}$.
\noindent 	Define the $H$-valued martingale increment
\[
\delta_{k}^{\iota_j}(\widetilde{M}_{\iota_j})
:=\widetilde{M}_{\iota_j}(t_{k+1}^{\iota_j})
-\widetilde{M}_{\iota_j}(t_{k}^{\iota_j}),
\]
for  \(k=0,1,\ldots,2^{\iota_j}-1\).
Since 
\[
\big(\Pro_{\iota_j}\widetilde{w}_{\iota_j}\big)(t_k^{\iota_j}) \in V,
\qquad k=1,\ldots,2^{\iota_j},
\]
and since \(\Phi:V\to \mathcal{L}_2(U,H)\), we know that
\[
\widetilde \Phi_{\iota_j,k}:=\Phi\!\left(\Pro_{\iota_j}\widetilde{w}_{\iota_j}(t_k^{\iota_j}),\omega\right)
\in \mathcal{L}_2(U,H), \qquad \omega\in\Omega.
\]
\noindent 	
Let us define\footnote{The symbol $^\ast$ denotes the adjoint.}
{\[
\widetilde{Q}_{k}^{\iota_j}
:=\widetilde \Phi^\ast _{\iota_j,k}\,\widetilde \Phi_{\iota_j,k}
.
\]
Since \(\widetilde{Q}_{k}^{\iota_j}:U\to U\)} is non-negative definite and symmetric, it admits the spectral decomposition
\begin{equation}\label{eq:spectralQ}
	\widetilde{Q}_{k}^{\iota_j}(\omega)
	=\sum_{\ell=1}^{\infty}
	\widetilde\lambda_{\ell }^{k}(\omega)\;
	\widetilde g_{\ell}^{k,\iota_j}(\omega)
	\otimes 
\widetilde	g_{\ell}^{k,\iota_j}(\omega),
	\qquad \omega\in\Omega,
\end{equation} 
where \(\{\widetilde\lambda_\ell^k(\omega)\}_{\ell\ge1}\) are the eigenvalues and
\(\{\widetilde g_{\ell}^{k,\iota_j}\}_{\ell\ge1}\subset U\) is an orthonormal basis of eigenvectors, both random variables over $\widetilde \MA$.
	Let us fix an orthonormal basis $\left\{f_{\ell}\right\}_{\ell\geq1}$ in $U$ and define an operator valued random variable $\widetilde	V_{k,{{\iota_j}}}:U\to U $, $k,\iota\in\NN$, over ${\wi{\MA}
	}$ by 
	$$
\widetilde	V_{k,{{\iota_j}}}=\sum_{\ell=1}^{\infty} \widetilde  g_{\ell}^{k,{\iota_j}} \otimes f_{\ell}  .
	$$
	In particular, we have for any $h\in U$
	$$
\widetilde	V_{k,{{\iota_j}}}h=\sum_{\ell=1}^{\infty} \la \widetilde g_{\ell}^{k,{{\iota_j}}} ,h\ra \otimes f_{\ell}.
	$$
	Then, let us define the adjoint
	$\widetilde V^{*}:U \to U$
	$$
\widetilde	V^{*}_{k,{{\iota_j}}}=\sum_{\ell=1}^{\infty} f_{\ell} \otimes \widetilde g_{\ell}^{k,{{\iota_j}}}.
	$$
	In particular, we have for any $h\in U$
	$$
\widetilde	V^{*}_{k,{{\iota_j}}}h=\sum_{m=1}^{\infty}\la  f_{\ell},h\ra  \widetilde g_{\ell}^{k,{{\iota_j}}}.
	$$
	Let us define over $\widetilde \MA$ the random operator 
	$$
\widetilde	\Lambda_{k,{{\iota_j}}}=\widetilde V_{k,{{\iota_j}}} \widetilde Q_{k}^{{\iota_j}} \widetilde V^{*}_{k,{{\iota_j}}}.
	$$
	By straightforward calculations, we can show that $\Lambda:U\to U$
	has the diagonal decomposition
	\begin{equation*}
	\widetilde	\Lambda_{k,{{\iota_j}}}=\sum_{\ell=1}^{\infty} \widetilde\lambda_{\ell}^k\,  f_{\ell} \otimes f_{\ell}. \label{8.8}
	\end{equation*}
Here, one should emphasise that $\widetilde{\Lambda}$ and $\widetilde{\lambda}_{\ell}^{k}\in\RR$ are random variables over $\widetilde{\mathfrak{A}}$, whereas $\{f_\ell \}_{\ell\geq1}$ is a deterministic basis of $U$.
Finally, let us define the discrete martingale 
\(\{\widetilde{N}_{k}^{\iota_j} : k=0,1,\ldots,2^{\iota_j}\}\) by
{\begin{equation}\label{8.9}
	\widetilde{N}_{k}^{\iota_j}
	:= \sum_{\ell=1}^{k} 
	\widetilde{V}_{\ell,\iota_j}^{*}\, \widetilde{\Phi}_{\iota_j,\ell}^\ast
	\delta_{\ell}^{\iota_j}(\widetilde{M}_{\iota_j}),
	\qquad k=0,1,\ldots,2^{\iota_j}.
\end{equation}Note that $ \widetilde{\Phi}_{\iota_j,\ell}^\ast
\delta_{\ell}^{\iota_j}(\widetilde{M}_{\iota_j})\in U$, the spectral decomposition of $\widetilde{Q}_{\ell}^{\iota_j}$ on the orthonormal family $\{f_\ell\}_{\ell\geq 1} $ in $U$ is well-posed and the covariance is obtained by}
\[
\big\langle\!\big\langle \widetilde{N}_{k}^{\iota_j}
\big\rangle\!\big\rangle
= \sum_{\ell=1}^{k}
\widetilde{V}_{\ell,\iota_j}^{*}\,
\widetilde{Q}_{\ell}^{\iota_j}\,
\widetilde{V}_{\ell,\iota_j}
= \sum_{\ell=1}^{k}
\widetilde{\Lambda}_{\ell,\iota_j},
\qquad k=0,1,\ldots,2^{\iota_j}.
\]
Consequently, the quadratic variation of the discrete martingale 
\(\widetilde{N}_{k}^{\iota_j}\) admits a diagonal decomposition, and for the
coordinate process
\[
\widetilde{N}^{\iota_j}_{k,\ell}
:= \langle \widetilde{N}_{k}^{\iota_j}, f_\ell\rangle,
\]
its quadratic variation is given by
\[
\big\langle\!\big\langle 
\widetilde{N}^{\iota_j}_{k,\ell}
\big\rangle\!\big\rangle
= \widetilde{\lambda}_{\ell}^{k}.
\]

	Now, we construct the Gaussian distributed increments and the extension $\wi{\wi{\MA}}$. 
	Let the family $\{ {\beta}^0_\ell:\ell\in\NN\}$ be mutually independent Brownian motions on a new filtered probability space ${\mathfrak{A}}^0=(\Omega^0,\CF^0,\PP^0)$ and let
us 	define the extension\footnote{For the exact definition of an extension, see  Appendix \ref{extension}.}
	\DEQSZ\label{sigmatilde}
	\wi{\wi{{\Omega}}}= \wi{\Omega} \times \Omega^{0},  \quad 
	\wi{\wi{\mathscr{F}}}=\wi{\mathscr{F} }\otimes \mathscr{F}^{0}, \quad  \wi{\wi{\mathbb{P}}}=\wi{\mathbb{P}} \otimes \mathbb{P}^{0}
	.
	\EEQSZ
Note that, since $\{\beta_\ell^{0} : \ell\in\mathbb{N}\}$ are not only random variables on $\mathfrak{A}^{0}$ but also random variables on $\widetilde{\widetilde{\mathfrak{A}}}$, we denote $\beta^{0}$ on the enlarged space by $\widetilde{\widetilde{\beta}}$. Similarly, we denote random elements that were originally defined on $\widetilde{\mathfrak{A}}$ by $\widetilde{\widetilde{\cdot}}$ when considered on the extended space. In particular, $\widetilde{w}$ defined over $\widetilde{\widetilde{\mathfrak{A}}}$, we will now write $\widetilde{\widetilde{w}}$.

Let 
	$$
\widetilde{\widetilde{\delta}}^{\iota_j}_{k}(\widetilde{\widetilde{\beta}}_{\ell})
:= 
\widetilde{\widetilde{\beta}}_{\ell}(t_{k+1}^{\iota_j})
-
\widetilde{\widetilde{\beta}}_{\ell}(t_{k}^{\iota_j}),
\qquad 
k=0,1,\ldots,2^{\iota_j},\quad j\in\mathbb{N},\; \ell\in\mathbb{N}.
	$$
	On the extension, define normally distributed random variables with variance 
	\begin{equation*}
		\widehat{\beta}_{\ell}^k =\mathds{1}_{\{\lambda_{\ell}^k=0\} }
		\delta _k^{{\iota_j}} ( \wi{\widetilde{\beta}}_\ell) +\mathds{1}_{\{\lambda_{\ell}^k>0\} }
		(\lambda_{\ell}^k)^{-\frac 12 } \delta_k^{{\iota_j}}(\wi{N}^{{\iota_j}} _{k,\ell}), \quad k=0,1,\ldots , 2^{{\iota_j}},\quad \ell \in \mathbb{N}. \label{8.11}
	\end{equation*}
	From the above process, we can construct the discrete martingale
	on the grid 
	$$\pi_{{\iota_j}}:=\lk\{t_k^{{\iota_j}}:=2^{-{{\iota_j}}}k:k=0,1,\ldots,2^{{\iota_j}} \rk\}$$
	by
	$$
	\lk\{ \widehat{\beta}_{{\iota_j}}^k :k=0,1,\ldots,2^{{\iota_j}}
	\rk\}
	$$
	on  the new probability space $\wi{\widetilde{\mathfrak{A}}}$
	with respect to the filtration $(\CF_{t_k^{{\iota_j}}})_{k=0,1,\dots 2^{{\iota_j}}}$.
	Let us define a sequence of stochastic processes $\widetilde{\widetilde{B}}_n$ 
	by
	$$
	\widetilde{\widetilde{B}}_1(0):=	0,\quad \widetilde{\widetilde{B}}_1(1):=\widehat{\beta}_0^1.
	$$
	Between the points $0$ and $1$, we interpolate the process ${\widetilde{\widetilde{B}}}_1$  linearly.
	Next, we put
	 $$	{\widetilde{\widetilde{B}}}_2(0):=	0,
	 \quad \widetilde{\widetilde{B}}_2(\frac 12 ):=\widehat{\beta}^1_1,\quad  \widetilde{\widetilde{B}}_2(1):=\widehat{\beta}_0^1.
	 $$
	 Again, between the points $0,\frac 12,1$, we  interpolate the process $\widetilde{\widetilde{B}}_2$  linearly.
	 Next, we put
	 $$	\widetilde{\widetilde{B}}_3(0):=	0, \  \widetilde{\widetilde{B}}_3(2^{-2}):=\widehat{\beta}^1_2,\ 
\ \widetilde{\widetilde{B}}_3(\frac 12 ):=\widehat{\beta}^1_1,\  \widetilde{\widetilde{B}}_3(\frac{3}{4}):=\widehat{\beta}^1_2,\ \widetilde{\widetilde{B}}_3(1):=\widehat{\beta}_0^1.
$$
	 Again, between the points $0,\frac 14,\frac 12,\frac 34,1$, we  interpolate the process $\widetilde{\widetilde{B}}_3$  linearly. 
	 Let us assume that we constructed $B_n$. Then, we can construct $B_{n+1}$ by 

	 \begin{align*}
	 	&\widetilde{\widetilde{B}}_{n+1}(0):=	0, \widetilde{\widetilde{B}}_{n+1}(2^{-(n+3)}):=\widehat{\beta}^1_{n+3},\quad 
	 	\widetilde{\widetilde{B}}_{n+1}( 2^{-(n+1)} ):=\widehat{\beta}_{n+1},\quad \\
	 	&\widetilde{\widetilde{B}}_{n+1}(2^{-n}):=\widehat{\beta}^1_{n},\quad \cdots,  \widetilde{\widetilde{B}}_{n+1}(1):=\widehat{\beta}_0^1.
	 \end{align*}
	 Again, between the points $0,2^{-(n+3 )},2^{-(n+1)},2^{-n},\cdots, 1$  we  interpolate the process $\widetilde{\widetilde{B}}_{n+1}$  linearly. 
	 This procedure can be iterated, and in this way, we obtain the
L\'evy–Ciesielski construction of Brownian motion.
Analysing the limit as \(n\to\infty\), we see from Theorem~\ref{LCCthrm}
that the sequence of stochastic processes converges 
\(\wi{\widetilde{\mathbb{P}}}\)-a.s.\ to a Brownian motion, which we denote by 
\(\wi{\widetilde{B}}^{\infty}\).

	Due to the definition of the shifted Haar projection, 
	it is straightforward to show that 
	\begin{align*} \nonumber 
		&\big( \widetilde{\widetilde{w}}_{{\iota_j}}(t),\phi\big)
		\\&\nonumber =\big( x_{{\iota_j}},\phi\big)+\int_{0}^{t}\big\la 
	A( \Pro_{{\iota_j}}(\widetilde{\widetilde{w}}_{{\iota_j}})(s) )+F_1(\Pro_{\iota_j}\widetilde{\widetilde{w}}_{\iota_j}(s))+F_2(\Pro_{\iota_j}\widetilde{\widetilde{w}}_{\iota_j}(s))
	,\phi\big\ra \, d s
		\\	& \quad +\Big(\sum_{i\in \mathbb{I}}Q^{\frac{1}{2}}{\psi_i} \sum_{n=1}^k \Sigma((\Pro_{{\iota_j}}	\widetilde{\widetilde{w}}_{{\iota_j}})(t_j^{{\iota_j}}) ) \big[\wi{\wi{B}}_{n}^{k+1}-\wi{\wi{B}}_{n}^{k}\big] ,  	\end{align*}
	for all $ t\in [0,T],\,\phi\in V$ and $\Sigma=\Sigma_1+\Sigma_2$. 
Due to the fact that 	$(\Pro_{{\iota_j}}	\widetilde{\widetilde{w}}_{{\iota_j}})(s)$ is constant for $s\in [t_j^{\iota_j}, t_{j+1}^{\iota_j})$ and  $\widetilde{\widetilde{B}}$, we can write also 
	\begin{align} \nonumber 
		&\big( \widetilde{\widetilde{w}}_{{\iota_j}}(t),\phi \big)
	\\&\nonumber =\big( x_{{\iota_j}}, \phi \big)+\int_{0}^{t}\big\la 
	A( \Pro_{{\iota_j}}\widetilde{\widetilde{w}}_{{\iota_j}}(s) )+F_1(\Pro_{\iota_j}\widetilde{\widetilde{w}}_{\iota_j}(s))+F_2(\Pro_{\iota_j}\widetilde{\widetilde{w}}_{\iota_j}(s)), \phi \big\ra 
	\,ds
	\\	&\quad  +\Big(\sum_{i\in \mathbb{I}}Q^{\frac{1}{2}}{\psi_i}  \int_0^t \Sigma((\Pro_{{\iota_j}}\widetilde{\widetilde{w}}_{{\iota_j}})(s) ) d\wi{\widetilde B}^\infty ,\phi\Big) ,  \text{ for all }t\in[0,T], \ \phi\in V, \label{a_infty}
\end{align}where $\Sigma=\Sigma_1+\Sigma_2$.

\newcommand{\wwwt}{\widetilde{\widetilde{w} }}
\newcommand{\wwi}[1]{\widetilde{\widetilde{#1}}}

Now, our aim is to show that 
$\widetilde{\widetilde{w}}^\infty $ is a solution to 
\begin{align*}
	\big( {\wwwt}^\infty (t),\phi \big) 
	&=\big( w_0,\phi \big)+\int_{0}^{t}
\big\la 	A( {\wwwt}^\infty (s))+F_1({\wwwt}^\infty(s))+F_2({\wwwt}^\infty(s))  
,\phi\big\ra ds  
	\\&\quad  +\Big(\sum_{i\in \mathbb{I}}Q^{\frac{1}{2}}{\psi_i}  \int_0^t \big(\Sigma_1({\wwwt}^\infty(s))+\Sigma_2({\wwwt}^\infty(s))\big) d\wi{\widetilde B}^\infty ,\phi\Big),
\end{align*}for all $t\in[0,T], \ \phi\in V$, which can be achieved with the help of identification of the limit functions, that is, we need to show that $A(\Pro_{\iota_j}\widetilde{\widetilde{w}}_{\iota_j}) \to A(\wi{\wi{w}}^\infty)$, $F_i(\Pro_{\iota_j}\wi{\wi{w}}_{\iota_j}) \to F_i(\wi{\wi{w}}^\infty)$ and $\Sigma_i(\Pro_{\iota_j}\wi{\wi{w}}_{\iota_j}) \to \Sigma_i(\wi{\wi{w}}^\infty)$, for $i\in\{1,2\}$ as $j\to\infty$, in the appropriate spaces  for all   $\wi{\wi{w}}_{
{\iota_j}}\in\mathcal{X}_{\wwi{\MA}}(R)$.
 Since  $\wi{w}_{{\iota_j}}=\mathcal{V}^{\iota_j}_{\wi{\MA},\wi{W}}(\wi{w}_{{\iota_j}})$ and the definition of the extension, we know that by the Assumption (\ref{iii}) and (\ref{iv}), that the family $\{\mathscr{L}(\wwi{w}_{{\iota_j}}):j\in\N\}$ is tight on $\mathbb{X}$.

We know that $\wi{w}_{{\iota_j}}\to\wi{w}^\infty$, $\wi{\PP}$-a.s. in ${L^m(0,T,V)}\cap \mathbb{D}(0,T;H)$, for $m>1$. The definition of the extension and by the construction of the Brownian motion, we know that 
 $\wwi{w}_{{\iota_j}}\to\wwi{w}^\infty$, $\wwi{\PP}$-a.s.
 
Now, our aim is to establish a strong convergence of $\{\wi{\wi{w}}_{\iota_j}\}_{j\in\N}$ in the space $\X$. For our purposes, we will use Assumption (\ref{iv}), the compact embedding of $\X_1$ into $\X$, uniform integrability, and the Vitali convergence theorem. 
The statement and its proof are presented in the following claim:
 \begin{claim}\label{uclaim}
 	Under the Assumption (\ref{iv}), the following hold:
 	\begin{enumerate}
 		\item for $r\in [1,m]$, there exists a positive constant $C$ such that 
 		\begin{align*}	
 			\sup_{j\in\N}\wi{\wi{\EE}}\Big[\big\|\wi{\wi{w}}_{{\iota_j}}\big\|_\X^r\Big]\leq C;
 		\end{align*}
 		\item for any $r\in [1,m)$, we have 
 		\begin{align*}	
 			\lim_{j\to\infty}\wi{\wi{\EE}}\Big[\big\|\wi{\wi{w}}_{{\iota_j}}-\wi{\wi{w}}^\infty\big\|_\X^r\Big]=0.
 		\end{align*}
 	\end{enumerate}
 \end{claim}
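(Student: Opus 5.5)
The plan is to obtain part (1) from the uniform bounds already built into the construction, and then deduce part (2) from the $\wwi{\PP}$-a.s.\ convergence $\wwi{w}_{\iota_j}\to\wwi{w}^\infty$ in $\X$ by the Vitali convergence theorem. For (1), recall that $\wi{w}_{\iota_j}$ has the same law as $w_{\iota_j}$ (by \eqref{assapp5}), and hence as $w^\ast_{\iota_j,\infty}$. Passing to the extension $\wwi{\MA}$ changes neither the process nor its law (Remark \ref{wtilde}). Since $w^\ast_{\iota_j,\infty}$ is a fixed point of $\Vcal^{\iota_j}_{\MA^\ast,W^\ast}=\Pro_{n_{\iota_j}}\circ\Vcal_{\MA^\ast,W^\ast}$ on $\Xcal_{\MA^\ast}(R)$, we can write $w^\ast_{\iota_j,\infty}=\Pro_{n_{\iota_j}}\zeta_j$ with $\zeta_j\in\Vcal_{\MA^\ast,W^\ast}(\Xcal_{\MA^\ast}(R))\subset\Xcal_{\MA^\ast}(R)$, the inclusion coming from Assumption (\ref{ii}). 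Because $\Xcal_{\MA^\ast}(R)$ is bounded in $\Mcal^m_{\MA^\ast}(\X)$ and $\Pro_n$ is a contraction on $\X$ (Remark \ref{projection}(\ref{PP1})), we get
\[
\wwi{\EE}\big[\|\wwi{w}_{\iota_j}\|_\X^m\big]=\EE^\ast\big[\|\Pro_{n_{\iota_j}}\zeta_j\|_\X^m\big]\le \sup_{\xi\in\Xcal_{\MA^\ast}(R)}\EE^\ast\big[\|\xi\|_\X^m\big]=:C<\infty,
\]
uniformly in $j$. The cases $r\in[1,m)$ then follow from Jensen's (or H\"older's) inequality. If one prefers to invoke Assumption (\ref{iv}) directly, the same argument works with $\|\cdot\|_\X\le \tilde C\|\cdot\|_{\X_1}$, but it only gives exponents up to $m_0$. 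For this reason I would base the moment bound on the boundedness of $\Xcal_\MA(R)$ in $\Mcal^m(\X)$, and reserve Assumption (\ref{iv}) for the tightness that produced the a.s.\ limit.

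For (2), the first task is to identify the limit and its integrability. By \eqref{assapp2} and Remark \ref{wtilde}, $\wwi{w}_{\iota_j}\to\wwi{w}^\infty$ in $\X$, $\wwi{\PP}$-a.s. Fatou's lemma together with (1) then yields $\wwi{\EE}[\|\wwi{w}^\infty\|_\X^m]\le C$. Next, fix $r\in[1,m)$ and set $Y_j:=\|\wwi{w}_{\iota_j}-\wwi{w}^\infty\|_\X^r$. Then $Y_j\to0$ a.s., and
\[
\sup_j\wwi{\EE}\big[Y_j^{m/r}\big]\le 2^{m-1}\Big(\sup_j\wwi{\EE}\|\wwi{w}_{\iota_j}\|_\X^m+\wwi{\EE}\|\wwi{w}^\infty\|_\X^m\Big)<\infty .
\]
Since $m/r>1$, the de la Vall\'ee-Poussin criterion shows that $\{Y_j\}_j$ is uniformly integrable. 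The Vitali convergence theorem therefore gives $\wwi{\EE}[Y_j]\to0$, which is the claim.

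The step I expect to need the most care is making the a.s.\ convergence genuinely hold in the norm of $\X$ on the extended space $\wwi{\MA}$. The Skorokhod representation was applied on $\X\cap\mathbb{H}$, so I have to check that the convergence in that space implies convergence in $\X$; it does, because the topology on the intersection is at least as fine as that of $\X$. I also have to check that the extension does not alter the joint law of $(\wi{w}_{\iota_j},\wi{w})$; this holds because $\wwi{w}=\wi{w}\circ\pi$. A second point to verify is that the fixed points $w^\ast_{\iota_j,\infty}$, built by the interval-by-interval induction, really lie in $\Pro_{n_{\iota_j}}\big(\Vcal(\Xcal(R))\big)$, so that the $m$-th moment bound applies. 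If that identification causes trouble, I would instead use Assumption (\ref{iv}) to get a uniform bound on the $m_0$-th moment in $\X_1$, hence in $\X$, and run the same Vitali argument for $r<m_0$.
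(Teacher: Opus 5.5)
Your proposal is correct, and part (2) is the paper's argument: a uniform moment bound gives uniform integrability, which with the a.s.\ convergence \eqref{assapp2} and Vitali's theorem yields the $L^r$-convergence. The real difference is where the bound in part (1) comes from. The paper takes it from Assumption~(\ref{iv}). That assumption only controls $\mathbb{E}\|\mathcal{V}(\xi)\|_{\mathbb{X}_1}^{m_0}$, and hence, through $\mathbb{X}_1\hookrightarrow\mathbb{X}$, $\mathbb{X}$-moments of order at most $m_0$. So the range $r\in[1,m]$ asserted there implicitly needs $m_0\ge m$. You instead use that $\mathcal{X}_{\mathfrak{A}^\ast}(R)$ is bounded in $\mathcal{M}^m_{\mathfrak{A}^\ast}(\mathbb{X})$, which is part of the hypotheses of Theorem~\ref{ther_main}. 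This gives exactly the stated exponents independently of $m_0$, and leaves (\ref{iv}) for tightness only. Your part (2) is also more careful than the paper's. The paper claims uniform integrability of $\|\cdot\|_{\mathbb{X}}^r$ also for $r=m$, where an $L^1$-bound does not suffice, and it leaves the integrability of the limit implicit; you use $m/r>1$ and Fatou. One simplification is available. To form $\zeta_j=\mathcal{V}(w^\ast_{\iota_j,\infty})$ and apply (\ref{ii}), you already need $w^\ast_{\iota_j,\infty}\in\mathcal{X}_{\mathfrak{A}^\ast}(R)$. With that membership, boundedness of $\mathcal{X}_{\mathfrak{A}^\ast}(R)$ bounds $\mathbb{E}^\ast\|w^\ast_{\iota_j,\infty}\|_{\mathbb{X}}^m$ directly. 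You can then drop the detour through $\zeta_j$, (\ref{ii}) and the contraction property of $\mathrm{Proj}_n$. Dropping it also avoids a small wrinkle: since $\mathrm{Proj}_n\xi=w_n$ on $[0,T2^{-n})$, the map is affine rather than linear. It therefore produces an extra term $T2^{-n}\mathbb{E}|w_n|_V^m$, which would have to be kept bounded through the choice of $w_n$.
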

 \begin{proof}[Proof of Claim \ref{uclaim}]
 	From the previous steps, we know that the operator $\Vcal_{\wi{\wi{\MA}},\wi{\wi{W}}}^{\iota_j}$ has a fixed-point $\wi{\wi{w}}_{{\iota_j}}$, which belongs to $\Vcal_{\wi{\wi{\MA}},\wi{\wi{W}}}^{\iota_j}(\Xcal_R(\wi{\wi{\MA}}))$. Since $\{\wi{\wi{w}}_{{\iota_j}}\}_{j\in\N}\subset \Vcal_{\wi{\wi{\MA}},\wi{\wi{W}}}^{\iota_j}(\Xcal_R(\wi{\wi{\MA}}))$, we know from Assumption (\ref{iv}), that for any $r\in [1,m]$  there exists some positive constant $R$ such that 
 	\begin{align*}	
 		\wi{\wi{\EE}}\Big[\big\|\wi{\wi{w}}_{{\iota_j}}\|_\X^r\Big]\leq R,\ \text{ for every }\ j\in\N,
 	\end{align*}which completes the first part of this claim. 
 	
 	From the above expression, we conclude that $\Big\{\big\|\wi{\wi{w}}_{{\iota_j}}\big\|_\X^r\Big\}_{j\in\N}$ is uniformly integrable for any $r\in[1,m]$, with respect to the probability measure $\wi{\wi{\PP}}$. Finally, using the convergence obtained in \eqref{assapp2}, and Vitali convergence theorem, we conclude that 
 	\begin{align*}	
 		\lim_{j\to\infty}\wi{\wi{\EE}}\Big[\big\|\wi{\wi{w}}_{{\iota_j}}-\wi{\wi{w}}^\infty\big\|_\X^r\Big]=0, 
 	\end{align*}for any $r\in [1,m)$.
 \end{proof}

Therefore, we obtain the following convergences:
\begin{enumerate}
	\item $\wwi{w}_{{\iota_j}} \xrightarrow{w} \wi{\wi{w}}^\infty$ in $L^m(\wwi{\Omega};L^m(0,T;V))$; 
	\item $A^{{\iota_j}}(\wwi{w}_{{\iota_j}})\xrightarrow{w} A^\infty$ in $L^\frac m {m-1}(\wwi{\Omega};L^\frac m {m-1}(0,T;V^\ast))$;
\item 
	 $\bar F_i^{\iota_j} (\wwi{w}_{\iota_j} )  \xrightarrow{w} {F}_i^\infty$ in $L^\frac m {m-1}(\wwi{\Omega};L^\frac m {m-1}(0,T;V^\ast))$, for $i\in\{1,2\}$;
	\item $\Sigma_i^{{\iota_j}}(\wwi{w}_{{\iota_j}})\xrightarrow{w} {\Sigma_i^\infty}$ in $L^2(\wwi{\Omega}; L^2(0,T;\mathcal{L}_2(U;H)))$, and\footnote{With $\xrightarrow{w^\ast}$ we denote the weak star convergence.}
	\begin{align*}
		\int_0^{\cdot}\Sigma_i^{\iota_j}(	\wwt_{\iota_j}(s))\, dW(s) 			\xrightarrow{w^\ast}\int_0^{\cdot}\Sigma_i^\infty	(s)\, dW(s),\ \text{ in }\ L^\infty(0,T;L^2(\wwi{\Omega};H)) ,
	\end{align*}for $i\in\{1,2\}$. 
\end{enumerate}Note that $A^{\iota_j}(\wwi{w}_{{\iota_j}})=A(\Pro_{\iota_j} \wwi{w}_{{\iota_j}}), \ F_1^{\iota_j}(\wwi{w}_{{\iota_j}})=F_1(\Pro_{\iota_j}\wwi{w}_{{\iota_j}})$, $F_2^{\iota_j}(\wwi{w}_{{\iota_j}})=F_2(\Pro_{\iota_j}\wwi{w}_{{\iota_j}})$,  $\Sigma_{1}^{\iota_j}(\wwi{w}_{{\iota_j}})=\Sigma_1(\Pro_{\iota_j}\wwi{w}_{{\iota_j}})$ and $\Sigma_{2}^{\iota_j}(\wwi{w}_{{\iota_j}})=\Sigma_2(\Pro_{\iota_j}\wwi{w}_{{\iota_j}})$

Let us fix   
\begin{align*}
\wi{\wi{w}}^\infty(t)&:= w_0+\int_0^t \Big(A^\infty(s)+ F_1^\infty(s)+F_2^\infty(s)\Big)\,ds\\&\qquad +\int_0^t\big(\Sigma_1^\infty(s)+\Sigma_2^\infty(s)\big)\, d\wi{\wi{W}}(s),\ \text{for all } \ t\in[0,T], \ \wi{\wi{\PP}}\text{-a.s.}
\end{align*}

In the sequel, we establish our results in the newly constructed filtered probability space $(\wi{\wi{\Omega}},\wi{\wi{\mathcal{F}}}, \{\wi{\wi{\mathcal{F}}}_t\}_{t\geq 0},\wi{\wi{\PP}})$. For convenience we change the superscript notations, for example, we write $\{\wi{\wi{w}}_{\iota_j}\}_{j\in\N}$ and $\wi{\wi{w}}^\infty$ as $\{\wh{w}_{\iota_j}\}_{j\in\N}$ and $\wh{w}^\infty$, respectively and the probability space by $\big(\wh{\Omega},\wh{\mathcal{F}},\{\wh{\mathcal{F}}_t\}_{t\geq 0},\wh{\PP})$.

To complete the proof of Theorem $\ref{ther_main}$, it remains to justify the passage to the limit in the approximate problems. Once these limits are identified, we will conclude that $w^\infty$ is a solution to the limiting equation corresponding to $\eqref{a_infty}$, thereby finishing the proof of Theorem $\ref{ther_main}$. Note that, for the sequences $\{F_1^{\iota_j}\}_{j\in\N}$, $\{F_2^{\iota_j}\}_{j\in\N}$ and $\{\Sigma_2^{\iota_j}\}_{j\in\N}$, we provide the direct verification of limit processes in Claims \ref{F_1lim}, \ref{F_2lim} and \ref{Sigma_2lim}, respectively. For the remaining sequences $\{A^{\iota_j}\}_{j\in\N}$ and $\{\Sigma_1^{\iota_j}\}_{j\in\N}$, we use the local monotonicity method and the verification is given in Claim \ref{A_lim}. 
We begin the identification with Claim \ref{F_1lim}. Note that we provide the proof in the case of Hypothesis \ref{hyp} (H.3)$_1$, since the other case (whenever the Hypothesis \ref{hyp} (H.3)$_2$ holds, that is, $g_1$ and $g_2$ are bounded)  is comparatively easy. 
\begin{claim}\label{F_1lim}
		$F_1^\infty=F_1(\wh{w}^\infty)=F_1(\wh{w}^\infty,\wh{w}^\infty)$, $\wh{\PP}\otimes dt$-a.e.
\end{claim}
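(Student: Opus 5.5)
We have to show that the weak limit $F_1^\infty$ of $\bar F_1(\Pro_{\iota_j}\wh w_{\iota_j},\Pro_{\iota_j}\wh w_{\iota_j})$ in $L^{\frac m{m-1}}(\wh\Omega;L^{\frac m{m-1}}(0,T;V^\ast))$ equals $\bar F_1(\wh w^\infty,\wh w^\infty)$. Since weak limits are unique, it suffices to prove the \emph{strong} convergence
\[
\bar F_1(\Pro_{\iota_j}\wh w_{\iota_j},\Pro_{\iota_j}\wh w_{\iota_j})\to \bar F_1(\wh w^\infty,\wh w^\infty)
\]
in $L^1(\wh\Omega\times(0,T);V^\ast)$, or at least along a subsequence $\wh\PP\otimes dt$-a.e. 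We first show that $\Pro_{\iota_j}\wh w_{\iota_j}\to\wh w^\infty$ in $L^r(\wh\Omega;\X)$ for $r<m$. We split
\[
\Pro_{\iota_j}\wh w_{\iota_j}-\wh w^\infty=\Pro_{\iota_j}(\wh w_{\iota_j}-\wh w^\infty)+(\Pro_{\iota_j}\wh w^\infty-\wh w^\infty).
\]
The first term is handled by the contraction property in Remark \ref{projection}(\ref{PP1}) together with Claim \ref{uclaim}(2). The second term goes to zero by Remark \ref{projection}(\ref{PP2}) pointwise in $\wh\omega$, combined with dominated convergence, since $\|\Pro_n\xi-\xi\|_\X\le 2\|\xi\|_\X$ and $\wh w^\infty\in L^m(\wh\Omega;\X)$.

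Next we use the triangle inequality in the two arguments:
\[
|\bar F_1(\xi_j,\xi_j)-\bar F_1(\xi,\xi)|_{V^\ast}\le |\bar F_1(\xi_j,\xi_j)-\bar F_1(\xi_j,\xi)|_{V^\ast}+|\bar F_1(\xi_j,\xi)-\bar F_1(\xi,\xi)|_{V^\ast},
\]
with $\xi_j=\Pro_{\iota_j}\wh w_{\iota_j}$ and $\xi=\wh w^\infty$. By (H.3)$_1$(a) the first term is at most $g_1(\xi_j)^{1/m}|\xi_j-\xi|_V^{\zeta_1}$, and by (H.3)$_1$(b) the second is at most $g_2(\xi)^{1/m}|\xi_j-\xi|_V^{\zeta_2}$. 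We integrate in $\wh\omega$ and $t$ and apply Hölder with exponents $q$ and $q'=\frac q{q-1}$:
\[
\wh\EE\int_0^T g_1(\xi_j)^{1/m}|\xi_j-\xi|_V^{\zeta_1}\,dt\le \Big(\wh\EE\int_0^T g_1^{q/m}(\xi_j)\,dt\Big)^{1/q}\Big(\wh\EE\int_0^T|\xi_j-\xi|_V^{\zeta_1 q'}\,dt\Big)^{1/q'}.
\]
The first factor is bounded via \eqref{Condition}, using $g_1^{q/m}\le 1+g_1^q$ and $\xi_j\in\mathcal X_{\wh\MA}(R)$. Here $\xi_j\in\mathcal X_{\wh\MA}(R)$ follows from $\wh w_{\iota_j}\in\mathcal X_{\wh\MA}(R)$ and the invariance point (\ref{1p}). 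The condition $q\ge\frac1{1-\zeta_1}$ gives $\zeta_1q'\le 1<m$, so the second factor is dominated by a power of $\wh\EE\|\xi_j-\xi\|_\X^{r}$ for some $r<m$, by Hölder in $t$ and in $\omega$. This tends to zero by the first step. The $g_2$ term is treated identically: $g_2(\wh w^\infty)$ is fixed, and its $q$-integrability follows because $\wh w^\infty\in\mathcal X_{\wh\MA}(R)$, a set that is sequentially weak$^\ast$ closed by hypothesis of Theorem \ref{ther_main}. (If one prefers, apply Fatou together with the continuity of $g_2$.)

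The main obstacle is the mismatch of norms. Hypothesis (H.3) is stated with $|\cdot|_V$, and the strong convergence from Claim \ref{uclaim} is in $\X=L^m(0,T;V)$. This is consistent here, but in the applications $\X$ is built on $H$, and then one would need to interpolate using the uniform $L^m(0,T;V)$ bound. I would first try to stay in the abstract setting $\X=L^m(0,T;V)$ exactly as fixed in Section \ref{schauder}.

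A second delicate point is justifying that $\bar F_1(\xi_j,\cdot)$ is measurable and that the weak limit is identified in $L^{\frac m{m-1}}$. This follows because $L^1$-strong convergence together with $L^{\frac m{m-1}}$-weak convergence forces the two limits to coincide $\wh\PP\otimes dt$-a.e., which yields $F_1^\infty=\bar F_1(\wh w^\infty,\wh w^\infty)=F_1(\wh w^\infty)$.
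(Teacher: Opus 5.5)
Your proof is correct and follows the paper's skeleton: split in the two arguments of $\bar F_1$, apply (H.3)$_1$(a),(b), use H\"older with exponents $q$ and $\frac{q}{q-1}$, bound the $g_i$-moments via \eqref{Condition}, and conclude with Claim \ref{uclaim}. Two of your choices differ from the paper, and both help.

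\textbf{Power of the estimate.} The paper estimates the $m$-th moment $\wh{\EE}\int_0^T|\cdot|_{V^\ast}^m\,ds$, which produces the exponent $\frac{m\zeta_i q}{q-1}$. For Claim \ref{uclaim}(2) to apply, this exponent must be $<m$, so the paper's proof needs (and states) the strict inequality $q>\frac1{1-\zeta_i}$, whereas (H.3)$_1$ only assumes $q\ge\frac1{1-\zeta_i}$. Working with first powers, your exponent is $\zeta_i\frac{q}{q-1}\le 1<m$, so the borderline case is covered. The price is a weaker mode of convergence. Your $L^1(\wh{\Omega}\times(0,T);V^\ast)$ convergence identifies the weak limit, which is all the claim asserts. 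The paper's estimate, however, gives convergence of $m$-th moments, and that is closer to what Claim \ref{A_lim} later uses when passing to the limit in $\langle F_1^{\iota_j}(\wh{w}_{\iota_j}),\wh{w}_{\iota_j}\rangle$.

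\textbf{Projection error.} The paper inserts $\wh{w}_{\iota_j}$ and therefore needs $\mathrm{Proj}_{\iota_j}\wh{w}_{\iota_j}-\wh{w}_{\iota_j}\to 0$ along a moving sequence. Remark \ref{projection} does not give this, since it only asserts pointwise convergence and explicitly disclaims uniformity; one would need something like Proposition \ref{prop:haar-uniform} plus tightness. Your decomposition uses contractivity, pointwise convergence at the fixed limit $\wh{w}^\infty$, and dominated convergence, which is exactly what the remark provides.

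\textbf{Moment bounds.} Your justification of $\wh{\EE}\int_0^T g_2^q(\wh{w}^\infty)\,ds\le R_{g_2}$, via weak$^\ast$-closedness or via Fatou with continuity of $g_2$, fills a step the paper leaves implicit. The bound for $g_1(\mathrm{Proj}_{\iota_j}\wh{w}_{\iota_j})$ is a different matter. In both your argument and the paper's it really rests on $\mathrm{Proj}$ preserving $\mathcal{X}_{\wh{\MA}}(R)$. Point (\ref{1p}) concerns $\mathrm{Proj}\circ\mathcal{V}$, so it does not literally give this.
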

\begin{proof}
	From the Claim \ref{uclaim}, we have the following convergence 
	\begin{align*}	
		\lim_{j\to\infty}{\EE}\Big[\big\|{\wh{w}}_{\iota_j}-{\wh{w}}^\infty\big\|_\X^r\Big]=0.
	\end{align*}for $r\in[1,m)$.
	Our aim is to show that 
	\begin{align*}
		\lim_{j\to\infty}\wh{\EE}\bigg[\int_0^T  | F_1^{\iota_j}(w_{\iota_j},w_{\iota_j})(s)-F_1(w^\infty,w^\infty)(s) | _{V^\ast}^m\,ds\bigg]=0.
			\end{align*}The above convergence can be justified as follows:
			\begin{align*}
			&	\wh{\EE}\bigg[\int_0^T  | F_1^{\iota_j}(\wh{w}_{\iota_j},\wh{w}_{\iota_j})(s)-F_1(\wh{w}^\infty,\wh{w}^\infty)(s) | _{V^\ast}^m\,ds\bigg]\\&
			=
			\wh{\EE}\bigg[\int_0^T  | F_1(\Pro_{\iota_j} \wh{w}_{\iota_j},\Pro_{\iota_j} \wh{w}_{\iota_j})(s)-F_1(\wh{w}^\infty,\wh{w}^\infty)(s) | _{V^\ast}^m\,ds\bigg]
			\\&\leq 
			\wh{\EE}\bigg[\int_0^T  | F_1(\Pro_{\iota_j} \wh{w}_{\iota_j},\Pro_{\iota_j} \wh{w}_{\iota_j})(s)-F_1(\Pro_{\iota_j}\wh{w}_{\iota_j} ,\wh{w}^\infty)(s) | _{V^\ast}^m\,ds\bigg]
			\\&\quad +
			\wh{\EE}\bigg[\int_0^T  | F_1(\Pro_{\iota_j} \wh{w}_{\iota_j},\wh{w}^\infty)(s)-F_1(\wh{w}^\infty ,\wh{w}^\infty)(s) | _{V^\ast}^m\,ds\bigg]
			\\&\leq 
			\wh{\EE}\bigg[\int_0^T g_1(\Pro_{\iota_j} \wh{w}_{\iota_j}(s)) | \Pro_{\iota_j} \wh{w}_{\iota_j}(s)- \wh{w}^\infty(s) | _{V}^{m\zeta_1}\,ds\bigg]
			\\&\quad +
			\wh{\EE}\bigg[\int_0^T g_2(\wh{w}^{\infty}(s)) | \Pro_{\iota_j} \wh{w}_{\iota_j}(s)- \wh{w}^\infty(s) | _{V}^{m\zeta_2}\,ds\bigg]
			\\&\leq 
			\wh{\EE}\bigg[\int_0^T g_1(\Pro_{\iota_j} \wh{w}_{\iota_j}(s)) | \Pro_{\iota_j} \wh{w}_{\iota_j}(s)- \wh{w}_{\iota_j}(s) | _{V}^{m\zeta_1}\,ds\bigg]
			\\&\quad +
			\wh{\EE}\bigg[\int_0^T g_1(\Pro_{\iota_j} \wh{w}_{\iota_j}(s)) |  \wh{w}_{\iota_j}(s)- \wh{w}^\infty(s) | _{V}^{m\zeta_1}\,ds\bigg]
			\\&\quad +
			\wh{\EE}\bigg[\int_0^T g_2(\wh{w}^{\infty}(s)) | \Pro_{\iota_j} \wh{w}_{\iota_j}(s)-\wh{w}_{\iota_j}(s) | _{V}^{m\zeta_2}\,ds\bigg]
			\\&\quad +
			\wh{\EE}\bigg[\int_0^T g_2(\wh{w}^{\infty}(s)) | \wh{w}_{\iota_j}(s)- \wh{w}^\infty(s) | _{V}^{m\zeta_2}\,ds\bigg]
			\\&\leq  {
				\bigg\{\wh{\EE}\bigg[\int_0^T g_1^{q}(\Pro_{\iota_j} \wh{w}_{\iota_j}(s))\,ds\bigg]\bigg\}^{\frac{1}{q}}\bigg\{	\wh{\EE}\bigg[\int_0^T | \Pro_{\iota_j} \wh{w}_{\iota_j}(s)- \wh{w}_{\iota_j}(s) | _{V}^{\frac{m\zeta_1q}{q-1} }\,ds\bigg]\bigg\}^{\frac{q-1}{q}} }
			\\&\quad   {+
				\bigg\{\wh{\EE}\bigg[\int_0^T g_1^{q}(\Pro_{\iota_j} \wh{w}_{\iota_j}(s))\,ds\bigg]\bigg\}^{\frac{1}{q} }\bigg\{	\wh{\EE}\bigg[\int_0^T | \wh{w}_{\iota_j}(s)- \wh{w}^\infty(s) | _{V}^{\frac{m\zeta_1q}{q-1}}\,ds\bigg]\bigg\}^{\frac{q-1}{q} } }
			\\&\quad   {+
				\bigg\{\wh{\EE}\bigg[\int_0^T g_2^{q}( \wh{w}^\infty(s))\,ds\bigg]\bigg\}^{\frac{1}{q}}\bigg\{	\wh{\EE}\bigg[\int_0^T | \Pro_{\iota_j} \wh{w}_{\iota_j}(s)- \wh{w}_{\iota_j}(s) | _{V}^{\frac{m\zeta_2q}{q-1}}\,ds\bigg]\bigg\}^{\frac{q-1}{q} } }
			\\&\quad  {+
				\bigg\{\wh{\EE}\bigg[\int_0^T g_2^{q}( \wh{w}^\infty(s))\,ds\bigg]\bigg\}^{\frac{1}{q} }\bigg\{	\wh{\EE}\bigg[\int_0^T | \wh{w}_{\iota_j}(s)- \wh{w}^\infty(s) | _{V}^{\frac{m\zeta_2q}{q-1}}\,ds\bigg]\bigg\}^{\frac{q-1}{q} }}
			\\&\leq   {
				R_{g_1}^{\frac{1}{q}}\bigg\{	\wh{\EE}\bigg[\int_0^T | \Pro_{\iota_j} \wh{w}_{\iota_j}(s)- \wh{w}_{\iota_j}(s) | _{V}^{\frac{m\zeta_1q}{q-1}}\,ds\bigg]\bigg\}^{\frac{q-1}{q} }}
			\\&\quad   {+
				R_{g_1}^{\frac{1}{q}}\bigg\{	\wh{\EE}\bigg[\int_0^T | \wh{w}_{\iota_j}(s)- \wh{w}^\infty(s) | _{V}^{\frac{m\zeta_1q}{q-1}}\,ds\bigg]\bigg\}^{\frac{q-1}{q}  }}
			\\&\quad  {+
				R_{g_2}^{\frac{1}{q}}\bigg\{	\wh{\EE}\bigg[\int_0^T | \Pro_{\iota_j} \wh{w}_{\iota_j}(s)- \wh{w}_{\iota_j}(s) | _{V}^{\frac{m\zeta_2q}{q-1}}\,ds\bigg]\bigg\}^{\frac{q-1}{q}  }}
			\\&\quad   {+
				R_{g_2}^{\frac{1}{q} }\bigg\{	\wh{\EE}\bigg[\int_0^T | \wh{w}_{\iota_j}(s)- \wh{w}^\infty(s) | _{V}^{\frac{m\zeta_2q}{q-1}}\,ds\bigg]\bigg\}^{\frac{q-1}{q}  }}\\&    { \to 0, \text{ as } j\to \infty,}
		\end{align*}for $\zeta_1,\zeta_2\in (0,1)$, where we have used Hypothesis \ref{hyp} (H.3)$_1$ (see (a) and (b)) and the fact that {$q\ >\ \max\!\big\{\tfrac{1}{1-\zeta_1},\,\tfrac{1}{1-\zeta_2}\big\}$} \big(which implies $\frac{m\zeta_i q}{q-1}<m$, for $i=1,2$\big), \eqref{Condition}, Claim \ref{uclaim}, and Remark \ref{projection}.
\end{proof}

\begin{claim}\label{F_2lim}
	$F_2^\infty=F_2(\wh{w}^\infty)$, $\wh{\PP}\otimes dt$-a.e.
\end{claim}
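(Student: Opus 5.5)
I will follow the pattern of Claim \ref{F_1lim}. Since $F_2^{\iota_j}(\wh{w}_{\iota_j})\xrightarrow{w}F_2^\infty$ in $L^{\frac m{m-1}}(\wh\Omega;L^{\frac m{m-1}}(0,T;V^\ast))$ and weak limits are unique, it suffices to prove the strong convergence
\begin{align*}
\lim_{j\to\infty}\wh{\EE}\bigg[\int_0^T | F_2(\Pro_{\iota_j}\wh{w}_{\iota_j}(s))-F_2(\wh{w}^\infty(s)) |_{V^\ast}^{m}\,ds\bigg]=0 .
\end{align*}
Strong convergence in $L^m(\wh\Omega\times(0,T);V^\ast)$ implies convergence in $L^{1}$ and hence $\wh\PP\otimes dt$-a.e. along a subsequence. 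It also implies weak convergence in $L^{\frac m{m-1}}(\wh\Omega\times(0,T);V^\ast)$; if $m<2$, so that $\frac m{m-1}>m$, I would test only against bounded test functions, which is enough to identify the limit. Therefore $F_2^\infty=F_2(\wh{w}^\infty)$ almost everywhere.

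For the strong convergence I use the H\"older-type condition \eqref{F_2} of Hypothesis \ref{hyp} (H.3). This is legitimate because $\Pro_{\iota_j}\wh{w}_{\iota_j}$ and $\wh w^\infty$ belong to $\Xcal_{\wh\MA}(R)$: the set is invariant under $\Pro$ and $\Vcal$ by Step (II)(\ref{1p}), and it is closed. The bound is
\begin{align*}
\wh{\EE}\bigg[\int_0^T | F_2(\Pro_{\iota_j}\wh{w}_{\iota_j})-F_2(\wh{w}^\infty) |_{V^\ast}^m\,ds\bigg]
\le C\,\wh{\EE}\bigg[\int_0^T | \Pro_{\iota_j}\wh{w}_{\iota_j}-\wh{w}^\infty |_{V}^{m\delta}\,ds\bigg].
\end{align*}
I split the integrand with the triangle inequality together with $(a+b)^{m\delta}\le C(a^{m\delta}+b^{m\delta})$. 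This gives one term in $\Pro_{\iota_j}\wh w_{\iota_j}-\wh w_{\iota_j}$ and one term in $\wh w_{\iota_j}-\wh w^\infty$. Because $\delta<1$, Jensen/H\"older in $\wh\Omega\times(0,T)$ gives
\begin{align*}
\wh\EE\int_0^T|z|_V^{m\delta}\,ds\le T^{1-\delta}\Big(\wh\EE\int_0^T|z|_V^{m}\,ds\Big)^{\delta}.
\end{align*}
Alternatively, choosing $r=m\delta<m$, one can bound it directly by $\wh\EE\|z\|_{\X}^{m\delta}$ up to a time factor.

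The term in $\wh w_{\iota_j}-\wh w^\infty$ tends to zero by Claim \ref{uclaim}(2) with $r=m\delta\in[1,m)$. If $m\delta<1$, I instead use Jensen to reduce to $r=1$.

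The term in $\Pro_{\iota_j}\wh w_{\iota_j}-\wh w_{\iota_j}$ is the main obstacle, because $\Pro_n\to\mathrm{I}$ only strongly and not uniformly (Remark \ref{projection}). I will handle it with the tightness decomposition already used in Step (V). Given $\ep>0$, take a compact set $K_\ep\subset\X$ with $\wh\PP(\wh w_{\iota_j}\notin K_\ep)\le\ep$ uniformly in $j$; this follows from Assumption (\ref{iv}) and Chebyshev's inequality. On $K_\ep$, Proposition \ref{prop:haar-uniform} gives $\|\Pro_{\iota_j}\zeta-\zeta\|_\X\le\ep$ for all large $j$. Off $K_\ep$, I apply H\"older's inequality together with the uniform moment bound of Claim \ref{uclaim}(1) and the contraction property of $\Pro$. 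This yields a bound of the form $\ep^{\frac{m-r}{m}}C$. Letting $j\to\infty$ and then $\ep\to0$ gives the convergence and concludes the claim.
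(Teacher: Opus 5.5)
Your proposal is correct and follows the paper's proof. You obtain strong convergence in $L^m(\wh\Omega\times(0,T);V^\ast)$ from the H\"older bound \eqref{F_2}, split it into the projection error $\Pro_{\iota_j}\wh w_{\iota_j}-\wh w_{\iota_j}$ and the error $\wh w_{\iota_j}-\wh w^\infty$ (controlled by Claim \ref{uclaim} with $r=m\delta$ or $r=1$), and conclude by uniqueness of weak limits. Your tightness-plus-Proposition \ref{prop:haar-uniform} treatment of the projection term is the Step (V) argument that the paper compresses into a citation of Remark \ref{projection}.

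One small correction: take the tightness of $\{\wh w_{\iota_j}\}$ in $\X$ from the a.s.\ convergence $\wh w_{\iota_j}\to\wh w^\infty$ in $\X$, not from Assumption (\ref{iv}) plus Chebyshev, because (\ref{iv}) bounds $\Vcal(\xi)$ in $\X_1$ but not $\Pro_{\iota_j}\Vcal(\xi)$. Alternatively, you can skip tightness altogether: the contraction property gives $\|\Pro_{\iota_j}\wh w_{\iota_j}-\wh w_{\iota_j}\|_\X\le 2\|\wh w_{\iota_j}-\wh w^\infty\|_\X+\|\Pro_{\iota_j}\wh w^\infty-\wh w^\infty\|_\X$, and the last term tends to zero in $L^r(\wh\Omega)$ by strong convergence of $\Pro_n$ and dominated convergence.
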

\begin{proof}
	  Our goal is to show the following: 
\begin{align*}
	\lim_{j\to\infty} \wh{\EE}\bigg[\int_0^T  | F_2^{\iota_j}(\wh{w}_{\iota_j}(s))-F_2(\wh{w}^\infty(s)) | _{V^\ast}^m\,ds\bigg]=0. 
\end{align*}
The above convergence can be justified as follows (along with a further subsequence):
\begin{align*}
&\wh{\EE}\bigg[\int_0^T  | F_2^{\iota_j}(\wh{w}_{\iota_j}(s))-F_2(\wh{w}^\infty(s)) | _{V^\ast}^m\,ds\bigg]\\&=
	\wh{\EE}\bigg[\int_0^T  | F_2(\Pro_{\iota_j}\wh{w}_{\iota_j}(s))-F_2(\wh{w}^\infty(s)) | _{V^\ast}^m\,ds\bigg]
\\&\leq  	\wh{\EE}\bigg[\int_0^T  | F_2(\Pro_{\iota_j}\wh{w}_{\iota_j}(s))-F_2(\wh{w}_{\iota_j}(s)) | _{V^\ast}^m\,ds\bigg]\\&\quad + \wh{\EE}\bigg[\int_0^T  | F_2(\wh{w}_{\iota_j}(s)) -F_2(\wh{w}^\infty(s)) | _{V^\ast}^m\,ds\bigg]
	\\& \leq 
	C \wh{\EE}\bigg[\int_0^T  | \Pro_{\iota_j}\wh{w}_{\iota_j}(s)-\wh{w}_{\iota_j}(s) | _V^{m\delta}\,ds\bigg]
	\\&\quad +C \wh{\EE}\bigg[\int_0^T  | \wh{w}_{\iota_j}(s) -\wh{w}^\infty(s) | _V^{m\delta}\,ds\bigg]
	\\& 
	\to 0, \text{ as } j\to \infty,
\end{align*}
for $\delta\in(0,1)$, where we have used the assumption on $F_2$ (see Hypothesis \ref{hyp} (H.3) \eqref{F_2}), Claim \ref{uclaim} and Remark \ref{projection}. We know that the weak limit is unique and hence $F_2^\infty=F_2(\wh{w}^\infty)$, $\wh{\PP}\otimes dt$-a.e.
\end{proof}

\begin{claim}\label{Sigma_2lim}
	$\Sigma_2^\infty=\Sigma_2(\wh{w}^\infty)$, $\widehat{\PP}\otimes dt$-a.e.
\end{claim}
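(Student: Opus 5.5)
We follow the template of Claim \ref{F_2lim}: we show strong convergence of $\Sigma_2^{\iota_j}(\wh{w}_{\iota_j})=\Sigma_2(\Pro_{\iota_j}\wh{w}_{\iota_j})$ to $\Sigma_2(\wh{w}^\infty)$ in $L^2(\wh{\Omega};L^2(0,T;\mathcal{L}_2(U,H)))$. Strong convergence implies weak convergence, and weak limits are unique. Hence $\Sigma_2^\infty=\Sigma_2(\wh{w}^\infty)$ follows $\wh{\PP}\otimes dt$-a.e. Concretely, we split with the triangle inequality,
\begin{align*}
&\wh{\EE}\bigg[\int_0^T |\Sigma_2(\Pro_{\iota_j}\wh{w}_{\iota_j}(s))-\Sigma_2(\wh{w}^\infty(s))|_{\mathcal{L}_2(U,H)}^2\,ds\bigg]\\
&\le 2\,\wh{\EE}\bigg[\int_0^T |\Sigma_2(\Pro_{\iota_j}\wh{w}_{\iota_j}(s))-\Sigma_2(\wh{w}_{\iota_j}(s))|_{\mathcal{L}_2(U,H)}^2\,ds\bigg]
+2\,\wh{\EE}\bigg[\int_0^T |\Sigma_2(\wh{w}_{\iota_j}(s))-\Sigma_2(\wh{w}^\infty(s))|_{\mathcal{L}_2(U,H)}^2\,ds\bigg].
\end{align*}
We then apply the H\"older-type bound from Hypothesis \ref{hyp} (H.2), $|\Sigma_2(\xi_1)-\Sigma_2(\xi_2)|^2_{\mathcal{L}_2(U,H)}\le C|\xi_1-\xi_2|_V^{2\kappa}$ with $\kappa\in(0,1)$, to each term. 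This bounds the right-hand side by
$$C\,\wh{\EE}\int_0^T|\Pro_{\iota_j}\wh{w}_{\iota_j}-\wh{w}_{\iota_j}|_V^{2\kappa}\,ds+C\,\wh{\EE}\int_0^T|\wh{w}_{\iota_j}-\wh{w}^\infty|_V^{2\kappa}\,ds.$$

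Next we reduce the exponent $2\kappa$ to a moment of the $\X=L^m(0,T;V)$ norm. If $2\kappa\le m$, H\"older's inequality in time and then in $\omega$ gives $\wh{\EE}\int_0^T|z|_V^{2\kappa}\,ds\le T^{1-2\kappa/m}\,\wh{\EE}\big[\|z\|_\X^{2\kappa}\big]$. If moreover $2\kappa<m$, then $r:=2\kappa\in(0,m)$. The second term then tends to zero by Claim \ref{uclaim}(2), possibly after one more H\"older step to lift the exponent to $r\ge 1$. The first term tends to zero by the uniform approximation estimate for $\Pro_n$ on $\mathcal{V}_{\MA,W}(\mathcal{X}_\MA(R))$ from Step (I), based on Proposition \ref{prop:haar-uniform} and Remark \ref{projection}. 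This uses that $\wh{w}_{\iota_j}$ has the law of an element of $\mathcal{V}(\mathcal{X}(R))$, and the estimate transfers between probability spaces because it depends only on laws. Alternatively, we can use the tightness and compact-set splitting argument given in the bullet list preceding the Skorokhod step, which is uniform in $j$.

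\textbf{Main obstacle.} The main obstacle is the exponent bookkeeping when $2\kappa\ge m$, which can happen for $m$ close to $1$. In that case we would instead use uniform integrability. By (H.2), $|\Sigma_2(\xi)|^2\le C(1+|\xi|_V^{2r})$, and Assumption (\ref{iv}) gives moments of order $m_0>1$ relative to $\X_1\hookrightarrow\X$. These bound $\wh{\EE}\int_0^T|z|_V^{2\kappa p}\,ds$ for some $p>1$ uniformly in $j$. Convergence in probability of $\int_0^T|z_j|_V^{2\kappa}\,ds$ to zero follows from the $\X$-convergence $\wh{w}_{\iota_j}\to\wh{w}^\infty$ a.s. and $\Pro_{\iota_j}\wh{w}_{\iota_j}-\wh{w}_{\iota_j}\to0$ in probability. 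Vitali's theorem then gives convergence of the expectations. Finally, since the convergence in (4) of the listed convergences is weak in $L^2(\wh\Omega;L^2(0,T;\mathcal{L}_2(U,H)))$, uniqueness of weak limits identifies $\Sigma_2^\infty$ with $\Sigma_2(\wh{w}^\infty)$, passing to a further subsequence if necessary.
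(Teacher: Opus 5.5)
Your main line is the paper's own argument: the same splitting through $\Sigma_2(\widehat w_{\iota_j})$, the H\"older bound from (H.2), Claim~\ref{uclaim} together with the approximation property of the shifted Haar projection, and uniqueness of weak limits. Your explicit step $\widehat{\mathbb E}\int_0^T|z|_V^{2\kappa}\,ds\le T^{1-2\kappa/m}\,\widehat{\mathbb E}\|z\|_{\mathbb X}^{2\kappa}$ is exactly the bookkeeping the paper leaves implicit. For $2\kappa<m$ your argument is complete, up to one small inaccuracy. $\widehat w_{\iota_j}$ is a fixed point of $\mathrm{Proj}_{n_{\iota_j}}\circ\mathcal V$, so it equals $\mathrm{Proj}_{n_{\iota_j}}v_j$ with $v_j=\mathcal V(\widehat w_{\iota_j})$; it does not lie in $\mathcal V(\mathcal X(R))$ itself. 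This is harmless: the linear part of the affine map $\mathrm{Proj}_n$ is a contraction on $\mathbb X$, so $\|\mathrm{Proj}_n\widehat w_{\iota_j}-\widehat w_{\iota_j}\|_{\mathbb X}\le\|\mathrm{Proj}_n v_j-v_j\|_{\mathbb X}$, and the Step (I) estimate still applies.

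Your fallback for $2\kappa\ge m$ would fail as written. The paper's proof does not treat this case separately, so you are right to flag it. Your first step bounds $\widehat{\mathbb E}\int_0^T|z|_V^{2\kappa p}\,ds$ with $2\kappa p>m$. That needs time integrability of $z$ in $V$ beyond $L^m(0,T;V)$, and nothing in the abstract hypotheses supplies it. The $m_0$-th moments of the $\mathbb X_1$-norm in Assumption (iv) only improve integrability in $\omega$ of $\|z\|_{\mathbb X}$ and give compactness in $\mathbb X$. Your second step also fails for $2\kappa>m$: almost sure convergence in $L^m(0,T;V)$ does not give $\int_0^T|z_j|_V^{2\kappa}\,ds\to0$.

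The repair is to apply Vitali on $\widehat\Omega\times[0,T]$ to the integrand itself, not on $\widehat\Omega$ to the time integral. Convergence $\widehat w_{\iota_j}\to\widehat w^\infty$ and $\mathrm{Proj}_{n_{\iota_j}}\widehat w_{\iota_j}-\widehat w_{\iota_j}\to0$ in $L^1(\widehat\Omega;\mathbb X)$ gives $|\mathrm{Proj}_{n_{\iota_j}}\widehat w_{\iota_j}-\widehat w^\infty|_V\to0$ in $\widehat{\mathbb P}\otimes dt$-measure. By the H\"older bound, $|\Sigma_2(\mathrm{Proj}_{n_{\iota_j}}\widehat w_{\iota_j})-\Sigma_2(\widehat w^\infty)|_{\mathcal L_2(U,H)}^2\to0$ in measure as well. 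Uniform integrability then comes from the growth bound. The majorant $C\big(2+|\mathrm{Proj}_{n_{\iota_j}}\widehat w_{\iota_j}|_V^{2r}+|\widehat w^\infty|_V^{2r}\big)$ is bounded in $L^{m/(2r)}(\widehat\Omega\times[0,T])$ when $2r<m$, by Claim~\ref{uclaim}(1) and Fatou for $\widehat w^\infty$. Some restriction of this kind is implicit in the paper anyway. Without it, $\Sigma_2(\mathrm{Proj}_{n_{\iota_j}}\widehat w_{\iota_j})$ need not be bounded in $L^2(\widehat\Omega;L^2(0,T;\mathcal L_2(U,H)))$, and the weak limit $\Sigma_2^\infty$ would not be available.
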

\begin{proof}  Our goal is to show the following: 
	\begin{align*}
		\lim_{j\to\infty} \wh{\EE}\bigg[\int_0^T  | \Sigma_2^{\iota_j}(\wh{w}_{\iota_j}(s))-\Sigma_2(\wh{w}^\infty(s)) | _{\mathcal{L}_2(U,H)}^2\,ds\bigg]=0. 
	\end{align*}The above convergence can be justified as follows (along with a further subsequence):
	\begin{align*}
		&\wh{\EE}\bigg[\int_0^T  | \Sigma_2^{\iota_j}(\wh{w}_{\iota_j}(s))-\Sigma_2(\wh{w}^\infty(s)) | _{\mathcal{L}_2(U,H)}^2\,ds\bigg]\\&=
		\wh{\EE}\bigg[\int_0^T  | \Sigma_2(\Pro_{\iota_j}\wh{w}_{\iota_j}(s))-\Sigma_2(\wh{w}^\infty(s)) | _{\mathcal{L}_2(U,H)}^2\,ds\bigg]
		\\&\leq  	\wh{\EE}\bigg[\int_0^T  | \Sigma_2(\Pro_{\iota_j}\wh{w}_{\iota_j}(s))-\Sigma_2(\wh{w}_{\iota_j}(s)) | _{\mathcal{L}_2(U,H)}^2\,ds\bigg]\\&\quad + \wh{\EE}\bigg[\int_0^T  | \Sigma_2(\wh{w}_{\iota_j}(s)) -\Sigma_2(\wh{w}^\infty(s)) | _{\mathcal{L}_2(U,H)}^2\,ds\bigg]
		\\& \leq 
		C \wh{\EE}\bigg[\int_0^T  | \Pro_{\iota_j}\wh{w}_{\iota_j}(s)-\wh{w}_{\iota_j}(s) | _V^{2 \kappa}\,ds\bigg]
		\\&\quad +C \wh{\EE}\bigg[\int_0^T | \wh{w}_{\iota_j}(s) -\wh{w}^\infty(s) | _V^{2\kappa}\,ds\bigg]
		\\&
		\to 0, \text{ as } j\to \infty,
	\end{align*}
for $\kappa\in(0,1)$, where we have used the assumption on $\Sigma_2$ (see Hypothesis \ref{hyp} (H.2)), Claim \ref{uclaim} and Remark \ref{projection}. We know that the weak limit is unique and hence $\Sigma_2^\infty=\Sigma_2(\wh{w}^\infty)$, $\wh{\PP}\otimes dt$-a.e.
\end{proof}

Now, it only remains to identify the limit processes $A^\infty$ and $\Sigma_1^\infty$, which is established in:
\begin{claim}\label{A_lim}
	$A^{\infty}=A(\wh{w}^\infty)$,  and  $ \Sigma_1^\infty=\Sigma_1(\wh{w}^{\infty}), \ \wh{\PP}\otimes dt$-a.e.
\end{claim}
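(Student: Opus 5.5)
The plan is to identify $A^\infty$ and $\Sigma_1^\infty$ by the local monotonicity (Minty-type) argument of \cite[Theorem 5.1.3]{weiroeckner}, adapted to the presence of the projection $\Pro_{\iota_j}$. We first derive an energy equality for the limit process. By construction, $\wh w^\infty$ satisfies
\[
\wh w^\infty(t)=w_0+\int_0^t(A^\infty+F_1^\infty+F_2^\infty)\,ds+\int_0^t(\Sigma_1^\infty+\Sigma_2^\infty)\,d\wh W .
\]
Claims \ref{F_1lim}, \ref{F_2lim} and \ref{Sigma_2lim} already identify $F_i^\infty$ and $\Sigma_2^\infty$ as $F_i(\wh w^\infty)$ and $\Sigma_2(\wh w^\infty)$. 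The Itô formula in the Gelfand triple (\cite[Theorem 4.2.5]{weiroeckner}) then gives an identity for $e^{-\int_0^t\lambda(s)ds}|\wh w^\infty(t)|_H^2$. Here we choose the weight
\[
\lambda(s):=f(s)+\rho(\wh w^\infty(s)),
\]
so that local monotonicity (H.1)(2) can be applied with $w_2=\wh w^\infty$. The same Itô formula is applied to the approximating fixed points $\wh w_{\iota_j}$, whose drift is $A(\Pro_{\iota_j}\wh w_{\iota_j})+\dots$.

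Next we take expectations at $t=T$ and subtract the Minty term. For an arbitrary test process $\phi\in L^m(\wh\Omega\times[0,T];V)\cap L^\infty(\wh\Omega\times[0,T];H)$ we form
\[
\wh\EE\int_0^T e^{-\int_0^s\lambda}\Big(2\langle A(\Pro_{\iota_j}\wh w_{\iota_j})-A(\phi),\Pro_{\iota_j}\wh w_{\iota_j}-\phi\rangle+|\Sigma_1(\Pro_{\iota_j}\wh w_{\iota_j})-\Sigma_1(\phi)|^2_{\mathcal L_2}-\lambda|\Pro_{\iota_j}\wh w_{\iota_j}-\phi|_H^2\Big)ds .
\]
This quantity is $\le0$ by (H.1)(2). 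We expand it, and wherever $A(\Pro_{\iota_j}\wh w_{\iota_j})$ is paired with $\Pro_{\iota_j}\wh w_{\iota_j}$ we replace the pairing using the Itô identity. Then we pass to $\liminf_{j\to\infty}$. Along the way:
\begin{itemize}
\item The terms with $F_i$ and $\Sigma_2$ converge strongly by the three preceding claims.
\item The linear terms converge by the weak convergences listed before Claim \ref{F_1lim}.
\item The replacement of $\Pro_{\iota_j}\wh w_{\iota_j}$ by $\wh w_{\iota_j}$ in $H$-norms is controlled by Claim \ref{uclaim} together with Proposition \ref{prop:haar-uniform}.
\item The lower semicontinuity
\[
\liminf_j\wh\EE\big[e^{-\int_0^T\lambda}|\wh w_{\iota_j}(T)|_H^2\big]\ge\wh\EE\big[e^{-\int_0^T\lambda}|\wh w^\infty(T)|_H^2\big]
\]
follows from weak convergence in $L^2(\wh\Omega;H)$.
\end{itemize}
Comparing with the Itô identity for $\wh w^\infty$ yields
\[
\wh\EE\int_0^T e^{-\int_0^s\lambda}\Big(2\langle A^\infty-A(\phi),\wh w^\infty-\phi\rangle+|\Sigma_1^\infty-\Sigma_1(\phi)|^2-\lambda|\wh w^\infty-\phi|^2_H\Big)ds\le0 .
\]

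Finally we carry out the Minty trick. Taking $\phi=\wh w^\infty$ gives $\Sigma_1^\infty=\Sigma_1(\wh w^\infty)$ directly. Then we take $\phi=\wh w^\infty-\epsilon\tilde\phi v$ with $v\in V$ and $\tilde\phi$ a bounded scalar process, divide by $\epsilon$, and let $\epsilon\to0$ using hemicontinuity (H.1)(1) and dominated convergence, with domination from the growth bound (H.1)(4). This gives $\langle A^\infty-A(\wh w^\infty),v\rangle=0$, so $A^\infty=A(\wh w^\infty)$, $\wh\PP\otimes dt$-a.e.

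The main obstacle is the nonconstant weight $\rho(\wh w^\infty)$. Making $e^{-\int\lambda}$ well-defined and passing to the limit require a localisation: we insert stopping times $\tau_N=\inf\{t:\int_0^t\rho(\wh w^\infty)ds\ge N\}$, or restrict to $\phi$ with $\int\rho(\phi)<\infty$, following \cite[Ch.~5]{weiroeckner}. A second difficulty is that monotonicity is applied at $\Pro_{\iota_j}\wh w_{\iota_j}$ while the Itô identity involves $\wh w_{\iota_j}$. The mismatch term $\langle A(\Pro_{\iota_j}\wh w_{\iota_j}),\wh w_{\iota_j}-\Pro_{\iota_j}\wh w_{\iota_j}\rangle$ must vanish in the limit. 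We would handle it with Hölder's inequality, the $L^{m/(m-1)}$ bound on $A$, and the convergence $\Pro_{\iota_j}\wh w_{\iota_j}-\wh w_{\iota_j}\to0$ in $L^r(\wh\Omega;\X)$. If this convergence is only available for $r<m$, we will need the higher moments from (iv) to close the estimate.
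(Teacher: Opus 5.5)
Your route is the same as the paper's: a weighted It\^o energy identity for $\wh w_{\iota_j}$ and for $\wh w^\infty$, local monotonicity to get a Minty inequality in the limit, then $\varphi=\wh w^\infty$ for $\Sigma_1^\infty$ and $\varphi=\wh w^\infty-\epsilon\tilde\phi v$ with hemicontinuity for $A^\infty$. On one point you are more careful than the paper: the paper applies (H.1)(2) directly to $\langle A(\mathrm{Proj}_{\iota_j}\wh w_{\iota_j})-A(\varphi),\wh w_{\iota_j}-\varphi\rangle$, which is not of monotone form, whereas you track that mismatch. However, your choice of weight breaks the key inequality. Condition (H.1)(2) evaluates $\rho$ at the second argument. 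So with $w_1=\mathrm{Proj}_{\iota_j}\wh w_{\iota_j}$, $w_2=\phi$ and $\lambda=f+\rho(\wh w^\infty)$ it only gives
\[
2\langle A(w_1)-A(\phi),w_1-\phi\rangle+|\Sigma_1(w_1)-\Sigma_1(\phi)|^2_{\mathcal L_2(U,H)}-\lambda|w_1-\phi|_H^2\le\big(\rho(\phi)-\rho(\wh w^\infty)\big)|w_1-\phi|_H^2 .
\]
Your Minty quantity is therefore $\le0$ only for $\phi=\wh w^\infty$. That suffices for $\Sigma_1^\infty=\Sigma_1(\wh w^\infty)$. It does not suffice for the step identifying $A^\infty$, where $\phi=\wh w^\infty-\epsilon\tilde\phi v$ and $\rho(\phi)\neq\rho(\wh w^\infty)$. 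The leftover term is quadratic in the approximants and has a coefficient of indefinite sign. Neither the weak convergences nor lower semicontinuity let you pass $j\to\infty$ in it. You would need strong convergence of $\mathrm{Proj}_{\iota_j}\wh w_{\iota_j}$ in an $L^2(H)$-space weighted by the unbounded factor $|\rho(\phi)-\rho(\wh w^\infty)|$, and that is not available.

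The repair is what the paper does, following \cite[Ch.~5]{weiroeckner}. Let the weight depend on the test function, $e^{-\int_0^t(f(s)+\rho(\varphi(s)))\,ds}$, localised by a stopping time that keeps $\int_0^{\tau}(f+\rho(\varphi))\,ds$ bounded. Then the Minty inequality holds for every admissible $\varphi$. In the final step you let $\epsilon\to0$ both in the weight $e^{-\int(f+\rho(\wh w^\infty-\epsilon\tilde\phi v))}$ and in the term $\epsilon\,(f+\rho(\wh w^\infty-\epsilon\tilde\phi v))|\tilde\phi v|_H^2$. This uses that $\rho$ is hemicontinuous and locally bounded in $V$, which is exactly why (H.1) imposes these properties on $\rho$.

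A smaller point concerns the mismatch term $\langle A(\mathrm{Proj}_{\iota_j}\wh w_{\iota_j}),\wh w_{\iota_j}-\mathrm{Proj}_{\iota_j}\wh w_{\iota_j}\rangle$. You cannot fall back on (iv) here: it only controls the $\X_1$-norm to order $m_0>1$ and gives no higher moments of $A(\mathrm{Proj}_{\iota_j}\wh w_{\iota_j})$ in $L^{m/(m-1)}(0,T;V^\ast)$. Use instead the exponent-$m$ control of $\mathrm{Proj}_{n_\iota}\xi-\xi$ built into the choice of $n_\iota$ in the first step of the proof of Theorem~\ref{ther_main}. Then H\"older with exponents $\frac{m}{m-1}$ and $m$ closes the estimate.
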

\begin{proof}[Proof of Claim \ref{A_lim}]

For our purposes, we will exploit the local monotonicity arguments (for more details, we refer to \cite[Chapter 5]{weiroeckner}, cf. \cite{AKMTM4}).

Let $\varphi$ be a $V$-valued measurable element in $L^m(\wh{\Omega};L^m(0,T;V))$ and let $\tau_\varphi:\wh{\Omega} \to  [0,T]$ be a stopping time such that 
\begin{align*}
	C_\varphi:= \sup_{\Omega}\int_0^{\tau_\varphi}\big(f(s)+\rho(\varphi(s))\big)\,ds<\infty, \quad \wh{\PP}\text{-a.s.}
\end{align*}where $\rho$ can be chosen as in Hypothesis \ref{hyp} (H.1). Note that the choice of $\varphi$ depends on $\wh{w}^\infty$, since we need to replace $\varphi$ by $\wh{w}^\infty$ in the latter case.

Applying the It\^o formula to the process $ | \wh{w}_{\iota_j} | _H^2$, and then taking expectation on both sides, we obtain 
\begin{align}\label{436}\nonumber
	&	  {\wh{\EE}}\Big[e^{-\int_0^{t\wedge \tau_\varphi}(f(s)+\rho(\varphi(s)))\,ds} | {\wh{w}}_{{\iota_j}}(t\wedge \tau_{\varphi}) | _H^2\Big]-	  {\wh{\EE}}\Big[ | x_{{\iota_j}} | _H^2\Big]
	\\
	& \nonumber=  {\wh{\EE}} \bigg[\int_0^{t\wedge \tau_{\varphi}}e^{-\int_0^{s}(f(r)+\rho(\varphi(r)))\, dr} \bigg(2\la A^{\iota_j}( {\wh{w}}_{{\iota_j}}(s)), {\wh{w}}_{{\iota_j}}(s)\ra+2\la F_1^{\iota_j}(\wh{w}_{\iota_j}(s)),\wh{w}_{\iota_j}(s)\ra\\&\nonumber\qquad + 2\la F_2^{\iota_j}(\wh{w}_{\iota_j}(s)),\wh{w}_{\iota_j}(s)\ra
	 + | \Sigma_1^{{\iota_j}}(  {\wh{w}}_{{\iota_j}}(s)) | _{\mathcal{L}_2(U,H)}^2+ | \Sigma_2^{{\iota_j}}(  {\wh{w}}_{{\iota_j}}(s)) | _{\mathcal{L}_2(U,H)}^2\\&\nonumber\qquad-\big(f(s)+\rho(\varphi(s))\big) | \wh{w}_{\iota_j}(s) | _H^2 \bigg)\,ds\bigg]
	\\& \nonumber=  {\wh{\EE}} \bigg[\int_0^{t\wedge \tau_{\varphi}}e^{-\int_0^{s}(f(r)+\rho(\varphi(r)))\, dr}\\&\qquad \times \nonumber \bigg(2\la A^{\iota_j}( {\wh{w}}_{{\iota_j}}(s))-A(\varphi(s))+A(\varphi(s)), {\wh{w}}_{{\iota_j}}(s)-\varphi(s)+\varphi(s)\ra
	\\&\qquad \ \nonumber 
	+ | \Sigma_1^{{\iota_j}}(  {\wh{w}}_{{\iota_j}}(s))-\Sigma_1(\varphi(s))+\Sigma_1(\varphi(s)) | _{\mathcal{L}_2(U,H)}^2+ | \Sigma_2^{{\iota_j}}(  {\wh{w}}_{{\iota_j}}(s)) | _{\mathcal{L}_2(U,H)}^2
		\\&\qquad \ \nonumber+2\la F_1^{\iota_j}(\wh{w}_{\iota_j}(s)),\wh{w}_{\iota_j}(s)\ra
+ 2\la F_2^{\iota_j}(\wh{w}_{\iota_j}(s)),\wh{w}_{\iota_j}(s)\ra
	\\&\qquad \ \nonumber 
	-\big(f(s)+\rho(\varphi(s))\big) | \wh{w}_{\iota_j}(s)-\varphi(s)+\varphi(s) | _H^2 \bigg)\,ds\bigg]
	\\&\  \nonumber\leq 
		{\wh{\EE}}
	\bigg[\int_0^{t\wedge \tau_{\varphi}}e^{-\int_0^{s}(f(r)+\rho(\varphi(r)))\, dr}\bigg(2\la A^{\iota_j}( {\wh{w}}_{{\iota_j}}(s))-A(\varphi(s)), {\wh{w}}_{{\iota_j}}(s)-\varphi(s)\ra
	\\&\ \nonumber\qquad + | \Sigma_1^{{\iota_j}}(  {\wh{w}}_{{\iota_j}}(s))-\Sigma_1(\varphi(s)) | _{\mathcal{L}_2(U,H)}^2-\big(f(s)+\rho(\varphi(s))\big) | \wh{w}_{\iota_j}(s)-\varphi(s) | _H^2  \bigg)\,ds \bigg]
	\\&  \nonumber\quad + 
	{\wh{\EE}}
	\bigg[\int_0^{t\wedge \tau_{\varphi}} e^{-\int_0^{s}(f(r)+\rho(\varphi(r)))\, dr}\bigg(2\la A^{{\iota_j}}( {\wh{w}}_{{\iota_j}}(s))-A(\varphi(s)),\varphi(s)\ra +2\la A	(\varphi(s)),\wh{w}_{\iota_j}(s)\ra\\&\nonumber\qquad  - | \Sigma_1(\varphi(s)) | _{\mathcal{L}_2(U,H)}^2
	+2\la \Sigma_1^{{\iota_j}}(  {\wh{w}}_{{\iota_j}}(s)), \Sigma_1(\varphi(s))\ra_{\mathcal{L}_2(U,H)}
	+2\la  F_1^{\iota_j}(\wh{w}_{\iota_j}(s)),\wh{w}_{\iota_j}(s)\ra  \\& \nonumber\qquad+2\la  F_2^{\iota_j}(\wh{w}_{\iota_j}(s)),\wh{w}_{\iota_j}(s)\ra + | \Sigma_2^{\iota_j}(\wh{w}_{\iota_j}(s)) | _{\mathcal{L}_2(U,H)}^2 -2\big(f(s)+\rho(\varphi(s))\big)\la \wh{w}_{\iota_j}(s),\varphi(s)\ra_H \\& \qquad +\big(f(s)+\rho(\varphi(s))\big) | \varphi(s) | _H^2\bigg)\,ds \bigg], 
\end{align}where we have used the following (for clarity we omit the time variable)
\begin{align*}
&\hspace{-3mm}	\la A^{\iota_j}( {\wh{w}}_{{\iota_j}})-A(\varphi)+A(\varphi), {\wh{w}}_{{\iota_j}}-\varphi+\varphi\ra \\&= 
\la A^{\iota_j}( {\wh{w}}_{{\iota_j}})-A(\varphi), {\wh{w}}_{{\iota_j}}-\varphi\ra+\la A^{\iota_j}( {\wh{w}}_{{\iota_j}})-A(\varphi),\varphi\ra+ \la A(\varphi),\wh{w}_{\iota_j}\ra, 
\\
&\hspace{-3mm}	 | \Sigma_1^{{\iota_j}}(  {\wh{w}}_{{\iota_j}})-\Sigma_1(\varphi)+\Sigma_1(\varphi) | _{\mathcal{L}_2(U,H)}^2
 \\& = 
\la \Sigma_1^{{\iota_j}}(  {\wh{w}}_{{\iota_j}})-\Sigma_1(\varphi), \Sigma_1^{{\iota_j}}(  {\wh{w}}_{{\iota_j}})-\Sigma_1(\varphi)\ra_{\mathcal{L}_2(U,H)} + \la \Sigma_1^{{\iota_j}}(  {\wh{w}}_{{\iota_j}})-\Sigma_1(\varphi), \Sigma_1(\varphi)\ra_{\mathcal{L}_2(U,H)} 
\\&\quad + \la \Sigma_1(\varphi), \Sigma_1^{{\iota_j}}(  {\wh{w}}_{{\iota_j}})-\Sigma_1(\varphi)\ra_{\mathcal{L}_2(U,H)} +\la \Sigma_1(\varphi), \Sigma_1(\varphi)\ra_{\mathcal{L}_2(U,H)} \\&
= | \Sigma_1^{{\iota_j}}(  {\wh{w}}_{{\iota_j}})-\Sigma_1(\varphi) | _{\mathcal{L}_2(U,H)}^2+2\la  \Sigma_1^{{\iota_j}}(  {\wh{w}}_{{\iota_j}}),\Sigma_1(\varphi)\ra_{\mathcal{L}_2(U,H)} - | \Sigma_1(\varphi) | _ {\mathcal{L}_2(U,H)}^2, 
\end{align*}and 
\begin{align*}
		-&\big(f+\rho(\varphi)\big) | \wh{w}_{\iota_j}-\varphi+\varphi | _H^2 \\&
		=	-	\big(f+\rho(\varphi)\big)\la \wh{w}_{\iota_j}-\varphi+\varphi,\wh{w}_{\iota_j}-\varphi+\varphi\ra_H
		\\&=-	\big(f+\rho(\varphi)\big) \Big\{\la \wh{w}_{\iota_j}-\varphi,\wh{w}_{\iota_j}-\varphi\ra_H+2\la \wh{w}_{\iota_j}-\varphi,\varphi\ra_H +\la\varphi,\varphi\ra_H
		 \Big\}
		 \\&=-	\big(f+\rho(\varphi)\big) \Big\{ | \wh{w}_{\iota_j}-\varphi | _H^2+2\la \wh{w}_{\iota_j},\varphi\ra_H- | \varphi | _H^2
		 \Big\}.
\end{align*}
Going back to \eqref{436} and passing $j\to\infty$, we obtain for any $\psi \in L^\infty(0,T;\R^+_0)$ 
\begin{align}\label{MM01}\nonumber
&	\wh{\EE}\bigg[\int_0^T \psi (t)\bigg( e^{-\int_0^{t\wedge \tau_\varphi}(f(s)+\rho(\varphi(s)))\,ds} | {\wh{w}}^\infty(t\wedge \tau_{\varphi}) | _H^2- | \wh{w}_0 | _H^2\bigg)\,dt \bigg]
	\\&\nonumber
	\leq
	\liminf_{j\to\infty}	\wh{\EE}\bigg[\int_0^T \psi (t)\bigg( e^{-\int_0^{t\wedge \tau_\varphi}(f(s)+\rho(\varphi(s)))\,ds} | \wh{w}_{\iota_j}(t\wedge \tau_{\varphi}) | _H^2- | x_{\iota_j} | _H^2\bigg)\,dt \bigg]
		\\& \nonumber\leq 
	{\wh{\EE}}
	\bigg[\int_0^T\psi(t)\bigg( \int_0^{t\wedge \tau_{\varphi}} e^{-\int_0^{s}(f(r)+\rho(\varphi(r)))\, dr}\bigg(2\la A^{\infty} (s)-A(\varphi(s)),\varphi(s)\ra \\&\nonumber\qquad +2\la A	(\varphi(s)),{\wh{w}}^\infty(s)\ra - | \Sigma_1(\varphi(s)) | _{\mathcal{L}_2(U,H)}^2
	+2\la \Sigma_1^{\infty}(s), \Sigma_1(\varphi(s))\ra_{\mathcal{L}_2(U,H)}
	\\&\nonumber\qquad +2\la  F_1^{\infty}(s),\wh{w}^\infty(s)\ra  +2\la  F_2^{\infty}(s),\wh{w}^\infty(s)\ra + | \Sigma_2^{\infty}(s) | _{\mathcal{L}_2(U,H)}^2
	\\&\qquad -2\big(f(s)+\rho(\varphi(s))\big)\la \wh{w}^\infty(s),\varphi(s)\ra
	_H 
	+\big(f(s)+\rho(\varphi(s))\big) | \varphi(s) | _H^2\bigg)\,ds\bigg)\,dt  \bigg],
\end{align}where we have used the Hypothesis \ref{hyp} (H.1) (ii), Claims \ref{F_1lim}, \ref{F_2lim} and \ref{Sigma_2lim} and the convergence of the sequence $\{\wh{w}_{\iota_j}\}_{j\in\N}$. \\
Applying the It\^o formula to the process $ | \wh{w}^\infty | _H^2$ yield
\begin{align}\label{MM02}\nonumber
&	\wh{\EE}\bigg[e^{-\int_0^{t\wedge \tau_\varphi}(f(s)+\rho(\varphi(s)))\,ds}  | \wh{w}^\infty(t) | _H^2
	\bigg]-\wh{\EE}\big[ | \wh{w}_0 | _H^2\big]
	\\& \nonumber
	= \wh{\EE}\bigg[\int_0^{t\wedge \tau_\varphi}e^{-\int_0^{s}(f(r)+\rho(\varphi(r)))\, dr}\Big(2\la A^{\infty}(s),\wh{w}^{\infty}(s)\ra +2\la F_1^{\infty}(s),\wh{w}^{\infty}(s)\ra\\&\nonumber\qquad +2\la F_2^{\infty}(s),\wh{w}^{\infty}(s)\ra+  | \Sigma_1^\infty(s) | _{\mathcal{L}_2(U,H)}^2+ | \Sigma_2^\infty(s) | _{\mathcal{L}_2(U,H)}^2\\&\qquad -\big(f(s)+\rho(\varphi(s))\big) | \wh{w}^\infty(s) | _H^2\Big)\,ds \bigg].
\end{align}Substituting \eqref{MM01} in \eqref{MM02} and a rearrangement of terms leads to
\begin{align}\label{MM03}\nonumber
&	\wh{\EE}\bigg[\int_0^T \psi(t)\bigg(\int_0^{t\wedge \tau_\varphi}e^{-\int_0^{s}(f(r)+\rho(\varphi(r)))\, dr}\Big(2\la A^{\infty}(s),\wh{w}^{\infty}(s)\ra +2\la F_1^{\infty}(s),\wh{w}^{\infty}(s)\ra\\&\nonumber\qquad +2\la F_2^{\infty}(s),\wh{w}^{\infty}(s)\ra+  | \Sigma_1^\infty(s) | _{\mathcal{L}_2(U,H)}^2+ | \Sigma_2^\infty(s) | _{\mathcal{L}_2(U,H)}^2\\&\qquad\nonumber -\big(f(s)+\rho(\varphi(s))\big) | \wh{w}^\infty(s) | _H^2\Big)\,ds\bigg)\,dt \bigg] 
	\\& \nonumber\leq 
	{\wh{\EE}}
	\bigg[\int_0^T\psi(t)\bigg( \int_0^{t\wedge \tau_{\varphi}} e^{-\int_0^{s}(f(r)+\rho(\varphi(r)))\, dr}\bigg(2\la A^{\infty} (s)-A(\varphi(s)),\varphi(s)\ra \\&\nonumber\qquad +2\la A	(\varphi(s)),{\wh{w}}^\infty(s)\ra - | \Sigma_1(\varphi(s)) | _{\mathcal{L}_2(U,H)}^2
	+2\la \Sigma_1^{\infty}(s), \Sigma_1(\varphi(s))\ra_{\mathcal{L}_2(U,H)}
	\\&\nonumber\qquad +2\la  F_1^{\infty}(s),\wh{w}^\infty(s)\ra  +2\la  F_2^{\infty}(s),\wh{w}^\infty(s)\ra + | \Sigma_2^{\infty}(s) | _{\mathcal{L}_2(U,H)}^2
	\\&\nonumber\qquad -2\big(f(s)+\rho(\varphi(s))\big)\la \wh{w}^\infty(s),\varphi(s)\ra 
	+\big(f(s)+\rho(\varphi(s))\big) | \varphi(s) | _H^2\bigg)\,ds\bigg)\,dt  \bigg]
\\&	\nonumber\wh{\EE}\bigg[\int_0^T \psi(t)\bigg(\int_0^{t\wedge \tau_\varphi}e^{-\int_0^{s}(f(r)+\rho(\varphi(r)))\, dr}\Big(2\la A^{\infty}(s),\wh{w}^{\infty}(s)\ra \\&\quad \nonumber+  | \Sigma_1^\infty(s)-\Sigma_1(\varphi(s)) | _{\mathcal{L}_2(U,H)}^2 -\big(f(s)+\rho(\varphi(s))\big) | \wh{w}^\infty(s)-\varphi(s) | _H^2\Big)\,ds\bigg)\,dt \bigg] 
	\\& \nonumber\leq 
	{\wh{\EE}}
	\bigg[\int_0^T\psi(t)\bigg( \int_0^{t\wedge \tau_{\varphi}} e^{-\int_0^{s}(f(r)+\rho(\varphi(r)))\, dr}\bigg(2\la A^{\infty} (s)-A(\varphi(s)),\varphi(s)\ra \\&\qquad +2\la A	(\varphi(s)),{\wh{w}}^\infty(s)\ra 
	\bigg)\,ds\bigg)\,dt  \bigg]. 
\end{align}From \eqref{MM03}, we obtain
\begin{align*}
&\nonumber 
\wh{\EE}\bigg[\int_0^T \psi(t)\bigg(\int_0^{t\wedge \tau_\varphi}e^{-\int_0^s(f(r)+\rho(\varphi(r)))\, dr}\Big(2\la A^\infty(s)-A(\varphi(s)),\wh{w}^\infty(s)-\varphi(s)\ra\\&\nonumber\qquad +  | \Sigma_1(\varphi(s))-\Sigma_1^\infty(s) | _{\mathcal{L}_2(U,H)}^2-\big(f(s)+\rho(\varphi(s))\big) | \wh{w}^\infty(s)-\varphi(s) | _H^2 \Big)\,ds\bigg)\,dt\bigg]\\&\leq 0. 
\end{align*}Taking $\varphi=\wh{w}^\infty$, and for $M>0$, 
\begin{align*}
	\tau_{\wh{w}^\infty}^M:=\tau_\varphi=\inf\bigg\{t\geq 0: \int_0^t\big(f(s)+\rho(\wh{w}^\infty(s))\big)\,ds >M\bigg\}\wedge T, 
\end{align*}and then passing $M\to\infty$, we obtain that 
\begin{align*}
	&\wh{\EE}\bigg[\int_0^T \psi(t)\bigg(\int_0^{t\wedge \tau_{\wh{w}^\infty}^M}e^{-\int_0^s(f(r)+\rho(\wh{w}^\infty(r)))\, dr}  | \Sigma_1(\wh{w}^\infty(s))-\Sigma_1^\infty(s) | _{\mathcal{L}_2(U,H)}^2\,ds\bigg)\,dt\bigg]\\&\leq 0,
\end{align*}and hence $\Sigma_1^\infty=\Sigma_1(\wh{w}^\infty)$, using the uniqueness of weak limit. Next, we choose $\varphi=\wh{w}^\infty -\ep \phi v$, for $\phi\in L^\infty(\Omega;L^\infty(0,T;\R)$, $v\in V$ and $\tau_{\wh{w}^\infty}^M$, for $M>0$. Then we divide the resultant by $\ep$ and passing $\ep\to0$, followed by the Hypothesis \ref{hyp} (H.1) (i), yield
\begin{align}\label{MM05}
	\nonumber
	&\wh{\EE}\bigg[\int_0^T \psi(t)\bigg(\int_0^{t\wedge \tau_{\wh{w}^\infty}^M}e^{-\int_0^s(f(r)+\rho(\wh{w}^\infty(r)))\, dr} \phi(s)\la A^\infty(s)-A(\wh{w}^\infty(s)),v(s)\ra \,ds\bigg)\,dt\bigg]\leq 0.
\end{align}Note that, here we are allowed to interchange the limit and the integral due to the Hypothesis \ref{hyp} and the definition of the stopping time $\tau_{\wh{w}^\infty}^M$. Due to the arbitrary choice of $\psi$ and $\phi$, we conclude that $A^\infty=A(\wh{w}^\infty)$ on the interval $[0,\tau_{\wh{w}^\infty}^M]$. Passing $M\to\infty$, we conclude that $A^\infty=A(\wh{w}^\infty)$.
\end{proof}
\end{step}
Hence, the proof of Theorem \ref{ther_main} is complete.
\end{proof}

	\begin{appendix}
	\renewcommand{\thesection}{\Alph{section}}
	\numberwithin{equation}{section}

\section{Extension of a probability  space}\label{extension}

To construct the Wiener process, it is necessary to introduce additional random variables on an auxiliary probability space and then extend the original probability space by means of this auxiliary space.
In what follows, we make precise the notion of an extension of a probability space (compare with \cite{Ikeda}).

\begin{definition}
	\label{Definition7.1.} We say a probability space $(\widetilde{\Omega}, \widetilde{\mathscr{F}}, \widetilde{\mathbb{P}} )$ with a filtration $\{\widetilde{\mathscr{F}}_{t}\}_{t\ge 0}$ is an extension of a probability space $(\Omega, \mathscr{F}, \mathbb{P})$ with a  filtration $\{\mathscr{F}_{t}\}$,  if there exists a mapping $\pi: \widetilde{\Omega} \longrightarrow \Omega$ which is $\widetilde{\mathscr{F}} / \mathscr{F}$-measurable such that\\
	(i) $\widetilde{\mathscr{F}}_{t} \supset \pi^{-1}\left(\mathscr{F}_{t}\right)$,\\
	(ii) $\mathbb{P}=\pi(\widetilde{\mathbb{P}})(:=\widetilde{\mathbb{P}} \circ \pi^{-1}), $ and\\
	(iii) for every $X(\omega) \in \mathscr{L}_{\infty}(\Omega, \mathscr{F}, \mathbb{P})$
	$$
	\widetilde{\EE}\left(\widetilde{X}(\widetilde{\omega}) \mid \widetilde{\mathscr{F}}_{t}\right)=\EE\left(X \mid \mathscr{F}_{t}\right)(\pi \widetilde{\omega}), \quad \widetilde{\PP} \text {-a.s., }
	$$
	where we set $\widetilde{X}(\widetilde{\omega})=X(\pi \widetilde{\omega})$ for $\widetilde{\omega} \in \widetilde{\Omega}$.
\end{definition}
\begin{definition}
	\label{Definition_extension}  Let $(\Omega, \mathscr{F}, \mathbb{P})$ be a probability space with  filtration $\{\mathscr{F}_{t}\}_{t\ge 0}$. Let $\left(\Omega^{\prime}, \mathscr{F}^{\prime}, \PP^{\prime}\right)$ be another probability space and set
	$$
	\widetilde{\Omega}=\Omega \times \Omega^{\prime}, \quad \widetilde{\mathscr{F}}=\mathscr{F} \times \mathscr{F}^{\prime}, \quad \widetilde{\mathbb{P}}=\mathbb{P} \times \mathbb{P}^{\prime},
	$$
	and
	$$
	\pi \widetilde{\omega}=\omega \quad \text { for } \quad \widetilde{\omega}=\left(\omega, \omega^{\prime}\right) \in \widetilde{\Omega}.
	$$
	If $\{\widetilde{\mathscr{F}}_{t}\}_{t\ge 0}$ is a filtration  on $(\widetilde{\Omega}, \widetilde{\mathscr{F}}, \widetilde{\mathbb{P}})$ such that $\mathscr{F}_{t} \times \mathscr{F}^{\prime} \supset \widetilde{\mathscr{F}}_{t} \supset \mathscr{F}_{t} \times\left\{\Omega^{\prime}, \emptyset\right\}$, then $(\widetilde{\Omega}, \widetilde{\mathscr{F}}, \widetilde{\mathbb{P}})$ with $\{\widetilde{\mathscr{F}}_{t}\}_{t\ge 0}$ is called a standard extension of $(\Omega, \mathscr{F}, \mathbb{P})$ with filtration $\{\mathscr{F}_t\}_{t\ge 0}$.
\end{definition}

{
\begin{remark}\label{Remcts}
	In this remark, we are justifying the continuity of the mapping $\mathscr{V}_\iota$ mentioned in the verification (\ref{02}) of page 30.
		\begin{enumerate}
			\item Instead of using a single deterministic map on fixed canonical space, the manuscript uses a standard extension $\mathfrak{A}_\mu=\mathfrak{A}_0\times \mathfrak{A}_1$ with an independent Wiener process and applies the measurable solution operator constructed from the shifted Haar projection $\mathrm{Proj}_{n_\iota}$ and $\mathcal{V}_{\mathfrak{A},W}^\iota$. The construction is carried out on the path space once $\mu$, or the law of the input coordinate process, is provided. This is because the definition of standard extensions provided in Appendix A provides a canonical, measurable method to add the Wiener coordinate while maintaining independence.
			
			\item The construction ensures that the law is invariant. Once the fixed point has been constructed on one space, the subsequent rules correspond when the same stepwise construction is applied to another space. When moving between $\mathfrak{A}$ and $\mathfrak{A}^\ast$ and demonstrating $\mathrm{Law}(w_{\iota_j,\infty})=\mathrm{Law}(w_{\iota_j,\infty}^\ast)$, this is employed explicitly. Therefore, $\mathscr{V}_{\iota}(\mu)$ depends entirely on $\mu$.
			
			\item	\emph{Continuity in Prokhorov metric}: We have 
			\begin{enumerate}
				\item 
				A measurable solution map produced on the product space using $\mathrm{Proj}_{n_\iota}$ and $\mathcal{V}_{\mathfrak{A},W}^{\iota}$; 
				\item Due to Assumption (\ref{iii}), there is uniform continuity on the bounded subset $\mathcal{X}_{\mathfrak{A}_\mu}(R)$. 
			\end{enumerate}
				\end{enumerate}
			As stated in (\ref{02}), continuity of $\mathscr{V}_{\iota}$ on laws in the Prokhorov metric results from meeting the requirements to express continuity from paths to laws via the continuous mapping principle. 
			\end{remark}
}
\section{$L^p$-spaces and the Haar system}\label{app:haar-system}\setcounter{equation}{0} 

In this subsection, we assume that $(Y,|\cdot|)$ is a separable Banach space. We assume for simplicity that $T=1$.
We recall some facts about the approximation properties of the Haar system and its shifted version. For the proofs and other details, we are referring to \cite[Appendix B]{FKEHMH}.

\subsection{Definition of the shifted Haar projection}

For each $n \in \mathbb{N}$, let $\pi_n = \{ s^n_0 = 0 < s^n_1 < \cdots < s^n_{2^n} = 1\}$ denote the dyadic partition of $[0,1]$ given by
\[
s^n_j = j 2^{-n}, \quad j = 0, \dots, 2^n.
\]
Each interval $(s^n_{j-1}, s^n_j]$ is called a \textit{dyadic interval}.

The classical (non-shifted) Haar projection $\mathfrak{h}_n: L^p([0,1];Y) \to L^p([0,1];Y)$ is defined by
\[
\mathfrak{h}_n(x) = \sum_{j=1}^{2^n} \mathbf{1}_{(s^n_{j-1}, s^n_j]} \otimes \iota_{j,n}(x),
\]
where
\[
\iota_{j,n}(x) := \frac{2^n}{T} \int_{s^n_{j-1}}^{s^n_j} x(r) \, dr
\]
denotes the mean value of \(x\) over $(s^n_{j-1}, s^n_j]$.

The \textit{shifted Haar projection} $\mathfrak{h}_n^s : L^p([0,1];Y) \to L^p([0,1];Y)$ is defined by
\[
\mathfrak{h}_n^s(x) = \sum_{j=0}^{2^n-1} \mathbf{1}_{(s^n_j, s^n_{j+1}]} \otimes \iota_{j,n}(x),
\]
where $\iota_{0,n}(x) := 0$ for convention.

\begin{proposition}[Convergence rate in fractional Sobolev spaces]\label{prop:haar-conv}
Let $Y$ be a Banach space, $1 < p < \infty$, and let $x \in \mathbb{W}^{\alpha}_p([0,1];Y)$ for some $\alpha \in (0,1]$. Then there exists a constant $C>0$, depending only on $p$ and $\alpha$, such that
\[
\| \mathfrak{h}_n^s(x) - x \|_{L^p([0,1];Y)} \leq C 2^{-n\alpha} \|x\|_{{W}^{\alpha}_p},
\]
where
\[
\|x\|_{{W}^{\alpha}_p}^p = \int_0^1 \int_0^1 \frac{ | x(t) - x(s) |_Y^p }{ |t-s|^{1+\alpha p} } \, ds \, dt
\]
denotes the Gagliardo seminorm.
\end{proposition}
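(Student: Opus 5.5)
The plan is a cell-by-cell Jensen argument. The difference $\mathfrak h_n^s(x)-x$ on a cell $(s^n_j,s^n_{j+1}]$ is compared with averages of $|x(r)-x(t)|^p$ over the neighbouring dyadic interval. Every pair $(t,r)$ that occurs then satisfies $|t-r|\le 2\cdot 2^{-n}$, which lets the Gagliardo kernel $|t-r|^{-1-\alpha p}$ be inserted at the cost of a factor $2^{n(1+\alpha p)}$. Since $T=1$, the averages are $\iota_{j,n}(x)=2^n\int_{s^n_{j-1}}^{s^n_j}x(r)\,dr$.

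\textbf{Interior cells $j\ge 1$.} Fix $t\in(s^n_j,s^n_{j+1}]$. By Jensen (convexity of $|\cdot|_Y^p$, since $\iota_{j,n}(x)$ is a mean),
\[
|\iota_{j,n}(x)-x(t)|_Y^p\le 2^n\int_{s^n_{j-1}}^{s^n_j}|x(r)-x(t)|_Y^p\,dr .
\]
For $r$ in the integration range we have $0<t-r\le 2^{1-n}$, hence
\[
1\le (2^{1-n})^{1+\alpha p}|t-r|^{-1-\alpha p}.
\]
Integrating in $t$ over the cell and summing over $j=1,\dots,2^n-1$ gives
\[
\int_{s^n_1}^1|\mathfrak h_n^s(x)-x|_Y^p\,dt\le 2^{1+\alpha p}\,2^{-n\alpha p}\int_0^1\!\!\int_0^1\frac{|x(t)-x(r)|_Y^p}{|t-r|^{1+\alpha p}}\,dr\,dt .
\]
Here we use that the sets $\{(t,r): t\in(s^n_j,s^n_{j+1}],\ r\in(s^n_{j-1},s^n_j]\}$ are disjoint. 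This is already the asserted bound, with $C^p=2^{1+\alpha p}$, on $(s^n_1,1]$.

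\textbf{First cell: the main obstacle.} On $(0,s^n_1]$ the convention $\iota_{0,n}(x)=0$ makes the error equal to $\|x\|_{L^p(0,2^{-n};Y)}$. This quantity is not controlled by the Gagliardo seminorm: for a constant $x\equiv y\ne0$ the seminorm vanishes while the error equals $2^{-n/p}|y|_Y$. So the statement can only hold when the first cell is treated consistently with the seminorm. I would fix this by reading the first-cell value as the local mean $2^n\int_0^{s^n_1}x$. This matches how the shifted projection is used in the main proof, where the first cell carries the datum $w_n$ instead of a vanishing value. With that reading, the same Jensen step applies with $|t-r|\le 2^{-n}$ and gives a contribution at most $2^{-n\alpha p}$ times the seminorm.

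Adding the two contributions gives $\|\mathfrak h_n^s(x)-x\|_{L^p}\le C2^{-n\alpha}\|x\|_{W^\alpha_p}$ with $C=(1+2^{1+\alpha p})^{1/p}$, which depends only on $p$ and $\alpha$. The only genuine issue is the first cell. The interior estimate is elementary, and I would state the first-cell interpretation explicitly rather than hide it.
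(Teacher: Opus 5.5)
Your diagnosis is correct, and it is the main point. With the convention $\iota_{0,n}(x)=0$ and only the Gagliardo seminorm on the right, the proposition is false as stated. For $x\equiv y\neq 0$ one gets $\|\mathfrak h^s_n(x)-x\|_{L^p([0,1];Y)}=2^{-n/p}|y|_Y$, while $\|x\|_{W^\alpha_p}=0$. The paper gives no argument of its own; it only cites Lemma~B.2 of \cite{FKEHMH}, so nothing in the paper repairs this. The defect is harmless for Theorem~\ref{ther_main}, whose proof only invokes Proposition~\ref{prop:haar-uniform}. That proposition uses uniform boundedness and strong convergence of $\mathfrak h^s_n$, and both do hold with $\iota_{0,n}=0$.

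Your interior argument is correct and self-contained. Jensen for the Bochner mean, the bound $0<t-r<2^{1-n}$, the factor $2^n\cdot 2^{(1-n)(1+\alpha p)}=2^{1+\alpha p}2^{-n\alpha p}$, and disjointness of the blocks $(s^n_j,s^n_{j+1}]\times(s^n_{j-1},s^n_j]$ together give the estimate on $(s^n_1,1]$ for the operator exactly as defined. The first-cell block has $t\le s^n_1$, so it is disjoint from these too, and you may take $C=2^{(1+\alpha p)/p}$; this is cosmetic.

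Be more careful about how you justify the repair. Replacing $0$ by the local mean $2^n\int_0^{s^n_1}x\,dr$ does not ``match'' the operator $\mathrm{Proj}_n$ of the main proof. There the first cell carries $w_n$, an approximation of the initial datum, and the resulting first-cell error $\|w_n-\xi\|_{L^m(0,T2^{-n};V)}$ is not controlled by the seminorm of $\xi$ either. So you are proving the estimate for a different operator, and the corrected proposition should say so. The honest versions are:
(a) the bound on $(s^n_1,1]$ for $\mathfrak h^s_n$ as defined, which is precisely what your interior step gives; or
(b) the bound on $[0,1]$ for your modified operator.

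A third repair keeps the operator unchanged. Read $\|x\|_{W^\alpha_p}$ as the full Sobolev--Slobodeckij norm, as in the footnote in Subsection~\ref{Example1}. For $\alpha p<1$, use the embedding $W^\alpha_p(0,1;Y)\hookrightarrow L^q(0,1;Y)$ with $\frac1q=\frac1p-\alpha$ (apply the scalar embedding to $|x|_Y$). H\"older then gives $\|x\|_{L^p(0,2^{-n};Y)}\le 2^{-n\alpha}\|x\|_{L^q(0,1;Y)}\le C2^{-n\alpha}\|x\|_{W^\alpha_p}$. Constants show that this reading still fails once $\alpha>1/p$, so the full range $\alpha\in(0,1]$ cannot be kept under either interpretation.
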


\begin{proof}
	For the proof we are referring to \cite[Lemma B.2]{FKEHMH}
\end{proof}

\begin{proposition}[Uniform convergence on compact sets]\label{prop:haar-uniform}
Let $\mathbb{X}$ be a Banach space compactly embedded into $L^p([0,1];Y)$, $1<p<\infty$. Let $\mathcal{X} \subset \mathbb{X}$ be a bounded set. Then for every $\varepsilon>0$, there exists $N \in \mathbb{N}$ such that for all $n \geq N$,
\[
\sup_{x \in \mathcal{X}} \| \mathfrak{h}_n^s(x) - x \|_{L^p([0,1];Y)} <\varepsilon.
\]
\end{proposition}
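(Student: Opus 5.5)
The plan is a two-step argument: a compactness step, then a uniform-approximation step. I aim to show that for every $\varepsilon>0$ there is a finite $\varepsilon/3$-net of $\mathcal{X}$ in $L^p([0,1];Y)$, and that $\mathfrak{h}_n^s$ is uniformly bounded on $L^p([0,1];Y)$. The uniform statement then follows from the pointwise convergence $\mathfrak{h}_n^s x\to x$ (Remark \ref{projection}(\ref{PP2})) by an equicontinuity argument, in the spirit of the Banach--Steinhaus/Arzel\`a--Ascoli principle: a uniformly bounded sequence of linear operators converging pointwise converges uniformly on relatively compact sets.

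\textbf{Step 1 (compactness).} Since $\mathbb{X}\hookrightarrow L^p([0,1];Y)$ is compact and $\mathcal{X}$ is bounded in $\mathbb{X}$, the image of $\mathcal{X}$ is relatively compact in $L^p([0,1];Y)$. Fix $\varepsilon>0$. I choose $x_1,\dots,x_M\in\mathcal{X}$ such that every $x\in\mathcal{X}$ satisfies $\|x-x_i\|_{L^p}<\varepsilon/(3(C_0+1))$ for some $i$, where $C_0$ is the operator bound from Step 2.

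\textbf{Step 2 (uniform boundedness).} I show that $\sup_n\|\mathfrak{h}_n^s\|_{L^p\to L^p}\le 1$. On the interval $(s_j^n,s_{j+1}^n]$ the function $\mathfrak{h}_n^s x$ equals the mean of $x$ over the previous dyadic interval $(s_{j-1}^n,s_j^n]$, and it is $0$ on the first interval. By Jensen's inequality applied to $|\cdot|_Y^p$,
\[
\int_{s_j^n}^{s_{j+1}^n}|\iota_{j,n}(x)|_Y^p\,dt\le \int_{s_{j-1}^n}^{s_j^n}|x(r)|_Y^p\,dr .
\]
Summing over $j$ gives $\|\mathfrak{h}_n^s x\|_{L^p}\le\|x\|_{L^p}$, so $C_0=1$. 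This is the content of Remark \ref{projection}(\ref{PP1}); the computation above just makes it explicit for the shifted operator.

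\textbf{Step 3 (pointwise convergence).} For each of the finitely many $x_i$ I need $\|\mathfrak{h}_n^s x_i-x_i\|_{L^p}\to0$. This follows by density. For a continuous $g$, uniform continuity gives $\mathfrak{h}_n^s g\to g$ in $L^p$; the shift by one dyadic interval and the zero on the first interval only cost terms of order the modulus of continuity of $g$ plus $2^{-n/p}\|g\|_\infty$. Continuous functions are dense in $L^p([0,1];Y)$, and combined with the uniform bound from Step 2 this gives convergence for every $x\in L^p$. Alternatively, if the $x_i$ lie in some $\mathbb{W}^\alpha_p$, Proposition \ref{prop:haar-conv} gives an explicit rate. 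I then choose $N$ with $\|\mathfrak{h}_n^s x_i-x_i\|<\varepsilon/3$ for all $i\le M$ and all $n\ge N$.

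\textbf{Conclusion.} For $x\in\mathcal{X}$ with nearest net point $x_i$, and any $n\ge N$,
\[
\|\mathfrak{h}_n^s x-x\|\le\|\mathfrak{h}_n^s(x-x_i)\|+\|\mathfrak{h}_n^s x_i-x_i\|+\|x_i-x\|<\varepsilon .
\]
Taking the supremum over $x\in\mathcal{X}$ gives the claim. The only real point needing care is the pointwise convergence for general $L^p$ elements in Step 3, where the shift and the boundary interval have to be controlled; the density argument above handles both.
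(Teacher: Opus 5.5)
Your proposal is correct and follows the paper's argument essentially step for step. Both use a finite $\delta$-net of $\mathcal{X}$ in $L^p([0,1];Y)$ from the compact embedding, a uniform operator bound on $\mathfrak{h}_n^s$, strong convergence at the finitely many centres, and the three-term triangle inequality. The only difference is that you verify the two ingredients the paper merely asserts, namely $\|\mathfrak{h}_n^s\|_{L^p\to L^p}\le 1$ via Jensen on the shifted dyadic intervals and $\mathfrak{h}_n^s x\to x$ via density of continuous functions, and both verifications are sound.
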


\begin{proof}
Since $\mathbb{X}$ embeds compactly into $L^p([0,1];Y)$ and $\mathcal{X}$ is bounded in $\mathbb{X}$, the closure $\overline{\mathcal{X}}$ of $\mathcal{X}$ in $L^p$ is compact.

{
The operators $\mathfrak{h}_n^s$ are uniformly bounded, (considering the operator norm is bounded by $M$), and  $\mathfrak{h}_n^s(x) \to x$ strongly in $L^p([0,1];Y)$.  
Now, let $\varepsilon>0$ and set $\delta:=\frac{\varepsilon}{3(M+1)}$. By the compactness of $\overline{\mathcal{X}}$, there exists a finite covering of open balls with radii $\delta$ and centers ${x_1, \ldots,x_N}\subset \overline{\mathcal{X}}$. For every $x\in \mathcal{X}$, there is $i$ with $\|x-x_i\|_{L^p([0,1];Y)}<\delta$. For each $i$, $\mathfrak{h}_n^s x_i \to x_i$, so there exists $N\in\N$ such that for each $n\geq N$ and $i=1,\ldots ,N$, we have
\begin{align*}
	\| \mathfrak{h}_n^s(x_i) - x_i \|_{L^p([0,1];Y)} <\frac{\varepsilon}{3}.
\end{align*} Now, for any $x\in \mathcal{X}$ and choose $i$ with $\|x-x_i\|<\delta$. For $n\geq N$, we have
\begin{align*}
&	\|\mathfrak{h}_n^s(x)-x\|_{L^p([0,1];Y)} 
\\&	\leq 	\|\mathfrak{h}_n^s(x-x_i)\|_{L^p([0,1];Y)}+ \|\mathfrak{h}_n^s(x_i)-x_i\|_{L^p([0,1];Y)}+\|x_i-x\|_{L^p([0,1];Y)}
\\&  \leq M \|x-x_i\|_{L^p([0,1];Y)}+\frac{\varepsilon}{3}+\|x-x_i\|_{L^p([0,1];Y)} 
\\&\leq (M+1)\delta+\frac{\varepsilon}{3} = \frac{2\varepsilon}{3}+\frac{\varepsilon}{3} = \varepsilon. 
\end{align*}Finally, taking the supremum over $x\in \mathcal{X}$ produce 
\begin{align*}
	\sup_{x \in \mathcal{X}} \| \mathfrak{h}_n^s(x) - x \|_{L^p([0,1];Y)} < \varepsilon,
\end{align*}for all $n\geq \N$. Since $\varepsilon>0$ was arbitrary, thus the convergence is uniform over $\mathcal{X}$.}
\end{proof}

\section{L\'evy--Ciesielski construction}\label{LC:sec} In this section, first we recall the L\'evy-Ciesielski theorem for Brownian motion (see \cite[Theorem 6.1]{Rogers}), and then provide an extension to the infinite-dimensional Wiener process.
\begin{theorem}[Lévy--Ciesielski construction: Almost sure uniform convergence on one-dimensional space and identification as Brownian motion]
	Let $(\Omega,\mathcal F,\mathbb P)$ support an i.i.d.\ family $\{\xi_{n,k}: n\ge 0, \, {0\le k<2^n}\}$ of $\mathcal N(0,1)$ random variables.
	Let $\{h_{n,k}\}$ be the (orthonormal) system  on $L^2(0,1)$  consisting of Haar wavelets and let
	\[
	S_{n,k}(t):=\int_0^t h_{n,k}(s)\,ds, \quad t\in[0,1]
	\]
	be the associated Schauder functions. Define partial sums
	\[
	B_m(t):=\xi_{0,0}\,t\;+\;\sum_{n=0}^{m}\ \sum_{k=0}^{2^n-1}\ \xi_{n,k} S_{n,k}(t),\quad t\in[0,1].
	\]
	Then:
	\begin{enumerate}
		\item $B_m\to B$ \emph{uniformly $\PP$-almost surely} on $[0,1]$, hence $B$ has continuous paths and is defined on the same probability space $(\Omega,\mathcal F,\mathbb P)$.
		In particular, 
		\begin{align*}
			\P \Big(\lim_{m\to\infty}\sup_{t\in[0,1]}\big|B_n(t)-B(t)\big|>\ep\Big)=0, \ \ \text{ for all }\ \ \ep>0;
		\end{align*}

		\item $B$ is a centred Gaussian process with covariance
		\[
		\mathbb E[B(s)B(t)]=\min\{s,t\},\quad s,t\in[0,1].
		\]
		Consequently, $B$ is a standard Brownian motion on $[0,1]$.
	\end{enumerate}
\end{theorem}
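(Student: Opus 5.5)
The statement to prove is the Lévy--Ciesielski theorem for one-dimensional Brownian motion. I would follow the classical route of \cite[Theorem 6.1]{Rogers}: first obtain almost sure uniform convergence of the Schauder series, then identify the covariance of the limit through Parseval's identity for the Haar system. Throughout I take the Haar system to be the one with $h_{0,0}\equiv 1$, and I read the term $\xi_{0,0}t$ in $B_m$ as the coefficient of $S_{0,0}(t)=t$. With this convention every coefficient is an independent standard Gaussian attached to exactly one element of the orthonormal basis.

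\textbf{Step 1: uniform convergence.} Fix a level $n\ge 1$. The functions $S_{n,k}$, $0\le k<2^n$, have disjoint supports, each of length $2^{-n}$, and satisfy $0\le S_{n,k}\le 2^{-n/2-1}$. Hence
\[
\sup_{t\in[0,1]}\Big|\sum_{k}\xi_{n,k}S_{n,k}(t)\Big|\le 2^{-n/2-1}M_n,\qquad M_n:=\max_{k<2^n}|\xi_{n,k}|.
\]
The Gaussian tail bound gives $\PP(|\xi|>x)\le e^{-x^2/2}$ for $x\ge1$. Applying it with $x=2\sqrt{n}$ and a union bound,
\[
\PP\big(M_n>2\sqrt n\big)\le 2^n e^{-2n},
\]
which is summable in $n$. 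By Borel--Cantelli, $M_n\le 2\sqrt n$ for all $n$ large enough, $\PP$-a.s. Therefore $\sum_n 2^{-n/2}M_n<\infty$ almost surely, so $B_m$ is Cauchy in $C([0,1])$ and converges uniformly on a full-measure event. Each $B_m$ is continuous, so the limit $B$ is continuous.

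\textbf{Step 2: covariance.} Each $B_m(t)$ is a finite linear combination of independent centred Gaussians, so it is centred Gaussian. Pointwise almost sure limits of centred Gaussian vectors are centred Gaussian, provided the variances converge, and they do here. Independence gives
\[
\EE[B_m(s)B_m(t)]=\sum_{n\le m,\,k}S_{n,k}(s)S_{n,k}(t).
\]
Since $S_{n,k}(t)=\langle \mathbf 1_{[0,t]},h_{n,k}\rangle_{L^2}$ and the Haar system is complete in $L^2(0,1)$, Parseval's identity shows that this sum converges to
\[
\langle \mathbf 1_{[0,s]},\mathbf 1_{[0,t]}\rangle=\min\{s,t\}.
\]
The partial sums also converge in $L^2(\PP)$: they form an orthogonal series whose variances are summable. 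So the $L^2(\PP)$ limit coincides with the almost sure limit $B$, and the covariances pass to the limit. A centred Gaussian process with continuous paths and covariance $\min\{s,t\}$ has independent increments with the variances of Brownian motion, so $B$ is a standard Brownian motion.

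The main obstacle is the completeness of the Haar system in $L^2(0,1)$, which is needed for Parseval's identity. I would prove it by noting that the span of $\{h_{n,k}\}$ up to level $m$ contains every dyadic step function of mesh $2^{-m-1}$, and that dyadic step functions are dense in $L^2(0,1)$. The Gaussian tail estimate in Step 1 is routine.
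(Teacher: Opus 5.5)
Your route is the classical argument the paper defers to (its proof only cites Rogers, pp.~10--12). The analytic content is sound:
\begin{itemize}
\item the tail bound $\PP(|\xi|>x)\le e^{-x^2/2}$ for $x\ge 1$;
\item the summable union bound $2^n e^{-2n}$;
\item the estimate $0\le S_{n,k}\le 2^{-n/2-1}$ for tents of support length $2^{-n}$;
\item the Gaussian-limit and $L^2(\PP)$ reasoning in Step~2.
\end{itemize}
The problem is your repair of the indexing, and as written it makes Step~2 fail.

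You rightly noticed that the printed formula is defective, since $\xi_{0,0}$ multiplies both $t$ and $S_{0,0}(t)$. But you set $h_{0,0}\equiv 1$ while keeping, at each level $n\ge 1$, the $2^n$ wavelets of support length $2^{-n}$ that your Step~1 bound uses. This discards the level-zero wavelet $\mathbf 1_{[0,1/2)}-\mathbf 1_{[1/2,1)}$, so your system is incomplete. The completeness argument you sketch then fails by a dimension count. Up to level $m$ you have $1+\sum_{n=1}^m 2^n=2^{m+1}-1$ orthonormal functions, while the dyadic step functions of mesh $2^{-m-1}$ form a space of dimension $2^{m+1}$. The consequence is not cosmetic: Parseval now gives $\EE[B(t)^2]=t-\min\{t,1-t\}^2$. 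This equals $1/4$ at $t=1/2$, so the limit is not a Brownian motion.

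The fix is to keep $h_{n,k}=2^{n/2}\big(\mathbf 1_{[k2^{-n},(k+\frac12)2^{-n})}-\mathbf 1_{[(k+\frac12)2^{-n},(k+1)2^{-n})}\big)$ for all $n\ge 0$, $0\le k<2^n$. Then attach to the constant function $1$, whose primitive is $t$, a separate standard Gaussian independent of $\{\xi_{n,k}\}$; this is evidently what the term $\xi_{0,0}t$ in the statement is meant to be. With this choice:
\begin{itemize}
\item your Step~1 applies verbatim at every level $n\ge 0$;
\item the count becomes $1+\sum_{n=0}^m 2^n=2^{m+1}$, so your density argument does prove completeness;
\item Parseval yields $\min\{s,t\}$ as claimed.
\end{itemize}
Equivalently, you may use the odd-$k$ indexing that the paper's appendix follows. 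There $h\equiv 1$ sits at level $0$, and level $n\ge 1$ has $2^{n-1}$ functions of support length $2^{-(n-1)}$. Your Step~1 then goes through with the sup bound $2^{-(n+1)/2}$ and $2^{n-1}$ terms in the union bound.
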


\begin{proof} The proof is available in \cite[pp. 10--12]{Rogers}
\end{proof}
The proof of the following extension relies on \cite[Theorem 6.1]{Rogers}.
\begin{theorem}[Lévy--Ciesielski construction in infinite-dimension]\label{LCCthrm}
Let	{$\{\xi^{\,i}_{k,n}\}_{0\le k<2^n,n\in \N}^{i\in \mathbb{I}}$} be a sequence of independent standard normal random variables defined on a probability space 
$(\Omega,\mathcal F,\mathbb P)$.  Let $\{\psi_i\}_{i\in \mathbb{I}}$ be the eigenfunctions of the covariance operator $Q$ corresponding to the eigenvalues $\{\lambda_i\}_{i\in \mathbb{I}}$, and the family $\{\psi_i\}_{i\in \mathbb{I}}$  forms a complete orthonormal system in $U$.
	Let $\{h_{k,n}\}$ be the (orthonormal) Haar wavelet system on $L^2(0,1)$ and let
	\[
	S_{k,n}(t):=\int_0^t h_{k,n}(s)\,ds, \quad t\in[0,1]
	\]
	be the associated Schauder functions. Define partial sums
	\[
{	W_n(t):=\sum_{i\in \mathbb{I}}\sum_{m=0}^{n}\ \sum_{(k,m)\in S_{m}}Q^\frac{1}{2}\psi_i\ \xi^{\,i}_{k,m}S_{k,m}(t),\quad t\in[0,1].}
	\]
	where $S_m=\{(k,m): k \text{ odd, } k\leq 2^m\}.$
	Then, the following hold:
	\begin{enumerate}
		\item $W_n\to W$ \emph{uniformly almost surely} on $[0,1]$, hence $W$ has continuous paths and is defined on the same probability space $(\Omega,\mathcal F,\mathbb P)$. In particular,
			\begin{align*}
			\P \Big(\lim_{m\to\infty}\sup_{t\in[0,1]}\big|W_n(t)-W(t)\big|_U>\ep\Big)=0, \ \ \text{ for all }\ \ \ep>0;
		\end{align*}
		\item $W$ is a centred Gaussian process with covariance
		{	\[
	\mathbb E\big[\la W(t),\psi_i\ra \la W(s), \psi_j\ra ]=\lambda_i \delta_{ij}\min\{s,t\},\quad s,t\in[0,1],
		\]where $\delta_{ij}$ is the Kronecker delta.}
		
		Consequently, $W$ is a Wiener process on $[0,1]$.
	\end{enumerate}
\end{theorem}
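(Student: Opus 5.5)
The plan is to reduce the infinite-dimensional statement to the one-dimensional L\'evy--Ciesielski theorem stated just above, applied coordinatewise in the eigenbasis $\{\psi_i\}_{i\in\mathbb{I}}$ of $Q$. The trace class property of $Q$ then controls the sum over $i$.

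\textbf{Step 1: coordinate Brownian motions.} Since $Q^{\frac12}\psi_i=\lambda_i^{\frac12}\psi_i$, for each fixed $i\in\mathbb{I}$ I set
\[
B^i_n(t):=\sum_{m=0}^{n}\sum_{(k,m)\in S_m}\xi^{\,i}_{k,m}S_{k,m}(t),\qquad t\in[0,1],
\]
after reindexing $S_m$ so that it matches the scalar Haar/Schauder enumeration. With this notation,
\[
W_n(t)=\sum_{i\in\mathbb{I}}\lambda_i^{\frac12}\,B^i_n(t)\,\psi_i .
\]
By the one-dimensional theorem, each $B^i_n$ converges uniformly $\P$-a.s. to a standard Brownian motion $B^i$. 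The processes $\{B^i\}_{i\in\mathbb{I}}$ are mutually independent, because they are measurable functions of the disjoint independent families $\{\xi^{\,i}_{k,m}\}_{k,m}$. I then define the candidate limit
\[
W:=\sum_{i}\lambda_i^{\frac12}B^i\psi_i .
\]

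\textbf{Step 2: a tail estimate uniform in $i$.} This is the main obstacle, since the scalar theorem gives convergence for each $i$ but no rate, and the $i$-sum is infinite. I would reopen the scalar proof. At level $m$ the Schauder functions $S_{k,m}$ have disjoint supports and $\|S_{k,m}\|_\infty\le 2^{-m/2}$. This gives
\[
\sup_t|B^i(t)-B^i_n(t)|\le\sum_{m>n}2^{-m/2}\max_{(k,m)\in S_m}|\xi^{\,i}_{k,m}| .
\]
The expected maximum of at most $2^m$ standard Gaussians satisfies $\EE\big[\max_{k}|\xi^{\,i}_{k,m}|^2\big]\lesssim m$. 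By Minkowski's inequality,
\[
\EE\Big[\sup_t|B^i-B^i_n|^2\Big]\le c_n:=C\Big(\sum_{m>n}2^{-m/2}\sqrt m\Big)^2 .
\]
Here $c_n$ decays geometrically and, crucially, does not depend on $i$, because all coordinate families have the same law.

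\textbf{Step 3: uniform convergence in $U$.} By orthonormality of $\{\psi_i\}$ and monotone convergence,
\[
\EE\Big[\sup_{t}|W(t)-W_n(t)|_U^2\Big]\le\sum_i\lambda_i\,\EE\Big[\sup_t|B^i-B^i_n|^2\Big]\le \operatorname{Tr}(Q)\,c_n .
\]
The same bound with $n=-1$ shows that the series defining $W$ converges in $L^2(\Omega;C([0,1];U))$, so $W$ has continuous paths. Since $\sum_n c_n<\infty$, Chebyshev's inequality and Borel--Cantelli give $\sup_t|W_n(t)-W(t)|_U\to0$ $\P$-a.s., which proves (1).

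\textbf{Step 4: Gaussianity and covariance.} The coordinate process is $\la W(t),\psi_i\ra=\lambda_i^{\frac12}B^i(t)$. Independence and the Brownian covariance of the $B^i$ then yield
\[
\EE\big[\la W(t),\psi_i\ra\la W(s),\psi_j\ra\big]=\lambda_i\delta_{ij}\min\{s,t\}.
\]
Every finite family $\{\la W(t_l),u_l\ra\}$ is an $L^2$-limit of jointly Gaussian centred variables, so $W$ is centred Gaussian. The covariance structure then implies independent, stationary increments with covariance $(t-s)Q$. Together with continuity of paths, this identifies $W$ as a $Q$-Wiener process in $U$ with the representation $W(t)=\sum_i Q^{\frac12}\psi_i B^i(t)$ used in Section~\ref{schauder}, proving (2).
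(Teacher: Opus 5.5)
Your proof is correct, but part (1) takes a different route from the paper's.

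\textbf{The paper's route.} The paper works directly with the $U$-valued level increments. Using disjointness of the supports of the $S_{k,n}$ at level $n$, it bounds $\sup_t|W_n(t)-W_{n-1}(t)|_U^2$ by $2^{-(n+1)}\sum_i\lambda_i\sup_k|\xi^i_{k,n}|^2$. Through the relation ``$\sim$'' and Claim~\ref{claim6.3}, it then replaces this weighted sum by a Gaussian tail bound for $\sup_k|\xi^i_{k,n}|^2$ at the threshold $2^{n+1}a_n^2/\sum_i\lambda_i$. It chooses $a_n=(\mathfrak{c}n2^{-(n+1)})^{1/2}$ and concludes with Borel--Cantelli. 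The covariance is then computed afresh via Parseval's identity for the indicators of $[0,s]$ and $[0,t]$ in the Haar basis.

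\textbf{Your route.} You decompose along the eigenbasis of $Q$ and prove a second-moment bound $\mathbb{E}[\sup_t|B^i-B^i_n|^2]\le c_n$ that is uniform in $i$. The factor $\operatorname{Tr}(Q)$ then absorbs the sum over $i$.

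\textbf{What each buys.} Your approach gains rigor exactly where the paper's argument is weakest. The event $\{\sum_i\lambda_iX_i>(\sum_i\lambda_i)x\}$ is not controlled by a tail event for a single coordinate $X_i$, so the paper's ``$\sim$'' step is a heuristic rather than an inequality. Your Minkowski/trace-class estimate is a genuine bound: in effect it controls the expectation of the weighted sum and applies Markov's inequality. Your route also gives convergence in $L^2(\Omega;C([0,1];U))$. It identifies the limit explicitly as $\sum_iQ^{1/2}\psi_iB^i$ with independent standard Brownian motions $B^i$, which is precisely the representation assumed in Section~\ref{schauder}. Gaussianity and the covariance are therefore inherited from the scalar theorem instead of recomputed. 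The paper's tail-bound strategy, once made rigorous (e.g.\ via exponential moments of $\sum_i\lambda_i\max_k|\xi^i_{k,n}|^2$), would give exponentially small level-wise probabilities, which is more than is needed here.

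\textbf{Two small corrections.} First, write $\mathbb{E}[\max_k|\xi^i_{k,m}|^2]\lesssim m+1$. The bound $\lesssim m$ fails at $m=0$, which you use when taking $n=-1$. Second, for path continuity of $W$, apply your estimate to the tails $\sum_{i\notin I_N}$ over finite sets $I_N\uparrow\mathbb{I}$. That is what shows the $i$-series is Cauchy in $L^2(\Omega;C([0,1];U))$.
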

\begin{proof}
	Let us assume a sequence of Gaussian random variables $\{\xi_{k,n}^i:i\in \mathbb{I},n\in\N, k \text{ odd,}\break k\leq 2^n\}$. 
	Now, define 
	\begin{equation*}
		h_{1,n}(t)=1,
			\end{equation*}and
			\begin{equation*}
	h_{k,n}(t)=
		\left\{\begin{aligned}
			&2^{(n-1)/2}, \quad &&(k-1)2^{-n}<t\leq k2^{-n},\\
			&-2^{(n-1)/2}, \quad  &&k2^{-n}<t\leq (k+1)2^{-n},\\
			&0, &&  \text{ otherwise},
		\end{aligned}
	\right.
	\end{equation*}for $n\in\N$, $0\leq k\leq 2^n$, $k$ odd. 

Let $S_n=\{(k,n): k \text{ odd, } k\leq 2^n\}$, and define $S=\bigcup\limits_{n\in\N}S_n$. Then, $\{h_{k,n}:(k,n)\in S\}$ is a complete orthonormal system in $L^2(0,1)$. Now, we define
	\[
S_{k,n}(t):=\int_0^t h_{k,n}(s)\,ds\quad (t\in[0,1]),
\] then $S_{k,n}$ satisfies the following properties:
\begin{enumerate}
	\item $|S_{k,n}|_\infty=\sup\limits_{t\in[0,1]}|S_{k,n}(t)|=2^{-(n+1)/2}$ (represents a tent-shaped function of height $2^{-(n+1)/2})$;
	\item for each fixed $n$ and $t$, at most one index $k$ has $S_{k,n}(t)\neq 0$, (supports are disjoint on dyadic intervals).
\end{enumerate}
Now, we consider the sequence of partial sums
\[
W_n(t):=\sum_{i\in\mathbb{I}}\sum_{m=0}^{n}\ \sum_{(k,m)\in S_{m}}Q^\frac{1}{2}\psi_i\ \xi_{k,m}^i S_{k,m}(t),\qquad t\in[0,1].
\]
Let us prove (i). For any positive constant $a_n$, 
\begin{align*}\nonumber
&	\P\Big(\sup_{t\in[0,1]} | W_n(t)-W_{n-1}(t) | _U>a_n\Big)
\\&\nonumber
\leq \P\Big(\sup_{t\in[0,1]} | W_n(t)-W_{n-1}(t) | _U^2>a_n^2\Big)
\\&\nonumber
\leq \P\Bigg(\sup_{t\in[0,1]}\Big(\sum_{i\in\mathbb{I}}Q^\frac{1}{2}\psi_i \xi_{k,n}^iS_{k,n}(t),\sum_{i'\in\mathbb{I}}Q^\frac{1}{2}\psi_{i'} \xi_{k,n}^{i'}S_{k,n}(t)\Big)>a_n^2\Bigg)
\\&\nonumber
\leq \P\Bigg(\sup_{t\in[0,1]}\sum_{i\in\mathbb{I}}\lambda_i  | \psi_i | _U^2 |\xi_{k,n}^i|^2|S_{k,n}(t)|^2>a_n^2\Bigg) 
\\&\nonumber
\leq \P\Big(\sum_{i\in\mathbb{I}}\lambda_i 2^{-(n+1)}\sup_{k}|\xi_{k,n}^i|^2>a_n^2\Big)
 \ \Big(\text{using orthonormality of }\psi_i, (i) \text{ for }S_{k,n}\Big)
\\&
 \sim  \P\left(\sup_{k}|\xi_{k,n}^i|^2>\frac{2^{n+1}a_n^2}{\sum\limits_{i\in\mathbb{I}}\lambda_i }\right)   \quad \Big(\text{using Claim \ref{claim6.3}}\Big)
\\&\nonumber
\leq \sqrt{\frac{C_1}{\pi} }2^{n/2-1}a_n^{-1}e^{-\frac{2^{(n+1)}a_n^2}{C_1}}, \qquad \Big(\text{where }C_1=\sum_{i\in\mathbb{I}}\lambda_i \Big).  \label{}
\end{align*}%
Let us choose the constant $a_n$ in such a way that 
{\begin{align}\label{FC}
\sqrt{\frac{C_1}{\pi}}	\sum_{n\in\N}  2^{n/2-1}a_n^{-1}e^{-\frac{2^{(n+1)}a_n^2}{C_1}}<\infty, \text{ and }
	\sum_{n\in\N}a_n<\infty.
\end{align}}
Let us choose $a_n= \big(\mathfrak{c}n2^{-(n+1)}\big)^{\frac{1}{2}}$, where $\mathfrak{c}>2C_1\ln 2$. Then, conditions change to
\begin{align*}
	\sqrt{\frac{C_1}{2\pi\mathfrak{c}}}\sum_{n\in\N}\frac{(2e^{-\mathfrak{c}/C_1})^n}{n^{\frac{1}{2}}}<\infty, \text{ and }\sqrt{\frac{\mathfrak{c}}{2}}\sum_{n\in\N} \frac{n^{\frac{1}{2}}}{2^{n/2}}<\infty.
\end{align*}
The condition \eqref{FC}$_1$ and the Borel-Cantelli lemma ensure that the convergence is almost sure. Therefore, 
\begin{align*}
	\sup_{t\in[0,1]}|W_n(t)-W_{n-1}(t)|_U\leq a_n, \text{ for all large enough }n;
\end{align*}the second condition \eqref{FC}$_2$ guarantees that $W_n$ converges uniformly almost surely to a limit function $W$, therefore, $W$ is continuous.

 From the above arguments, we obtained that $W_n$ converges uniformly to some continuous limit  $W$. Now, we need to show that the limit $W$ is a centred Gaussian process with covariance structure $\mathbb{E}\big[\langle W(t),u\rangle \langle W(s),v\rangle\big]=s\langle Q u,v\rangle,$ for $s\leq t$. Note that, $W_n$ is a centred Gaussian process, thus the vector form $\big(W_n(t_1), W_n(t_2),\dots,W_n(t_k)\big)$ is also a centred Gaussian process and converges almost surely to $\big(W(t_1), W(t_2),\dots,W(t_k)\big)$, which also has a zero-mean Gaussian law. Moreover, the limit of covariances of $W_n$ gives the covariance of $W$
 
Let us move to the proof of covariance. We know that 
\begin{align*}
&	\mathbb{E}\Big[\langle W_n(t),u\rangle \langle W_n(s),v\rangle \Big]\\&=\EE \Bigg[ \Big\langle\sum_{i\in\mathbb{I}}\sum_{m=0}^{n}\ \sum_{(k,m)\in S_{m}}\sqrt{\lambda_i }\psi_i \ \xi_{k,m} ^iS_{k,m}(t),u\Big\rangle\\&\quad \times \Big\langle \sum_{i'\in\mathbb{I}}\sum_{m=0}^{n}\ \sum_{(k,m)\in S_{m}}\sqrt{\lambda_{i'} }\psi_{i'} \ \xi_{k,m}^{i'} S_{k,m}(s),v\Big\rangle\Bigg]
\\&=
\sum_{i\in\mathbb{I}}\langle\sqrt{\lambda_i }\psi_i ,u\rangle\langle \sqrt{\lambda_i }\psi_i ,v\rangle\Bigg(
\sum_{m=0}^{n}\ \sum_{(k,m)\in S_{m}} S_{k,m}(s) S_{k,m}(t)\Bigg)
\\&=
\sum_{i\in\mathbb{I}}\langle\sqrt{Q}\psi_i ,u\rangle\langle \sqrt{Q}\psi_i ,v\rangle\Bigg(
\sum_{m=0}^{n}\ \sum_{(k,m)\in S_{m}} S_{k,m}(s) S_{k,m}(t)\Bigg)
\\&=
\langle Qu,v\rangle\Bigg(
\sum_{m=0}^{n}\ \sum_{(k,m)\in S_{m}} S_{k,m}(s) S_{k,m}(t)\Bigg)
\end{align*}where we used the independence of $\xi_{k,n}$. The right-hand side of the above expression converges to 
\begin{align*}
	\langle Qu,v\rangle
 \sum_{(k,m)\in S} S_{k,m}(s) S_{k,m}(t)= \int_0^1 \mathds{1}_{[0,s]} (r)\mathds{1}_{[0,t]}(r)\, d r=\min\{t,s\},
\end{align*}since $\displaystyle S_{k,m}(t)=\int_0^1\mathds{1}_{[0,t]}(r)h_{k,m}(r)\, dr$ is the Fourier coefficient of $h_{k,n}$ in the representation of $\mathds{1}_{[0,t]}$ in terms of complete orthonormal system $\{h_{k,n}:(k,n)\in S\}$.
\end{proof}

\begin{claim}\label{claim6.3}
	To show
	\begin{align*}
		\P\bigg(\sup_{k\in[0,2^n],k\text{ odd}}|\xi_{k,n}^i|^2> \frac{2^{(n+1)}a_n^2}{C_1}\bigg)
		\simeq
		\sqrt{\frac{C_1}{\pi} }2^{n/2-1}a_n^{-1}e^{-\frac{2^{(n+1)}a_n^2}{C_1}}.
	\end{align*}
\end{claim}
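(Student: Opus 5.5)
The plan is to reduce the claim to the one-dimensional Gaussian tail (Mills-ratio) asymptotics, combined with a counting argument over the dyadic level $n$. I set
\[
x_n:=\frac{2^{n+1}a_n^2}{C_1},\qquad s_n:=\sqrt{x_n}=\frac{2^{(n+1)/2}a_n}{\sqrt{C_1}},
\]
so that the event is $\{\max_{k}|\xi^{i}_{k,n}|>s_n\}$. The maximum runs over the $N_n:=2^{n-1}$ odd indices $k\le 2^n$, and the variables $\xi^i_{k,n}$ are i.i.d.\ $\mathcal N(0,1)$ for fixed $i$. Writing $p_n:=\P(|\xi|>s_n)$ with $\xi\sim\mathcal N(0,1)$, independence gives exactly
\[
\P\Big(\sup_k|\xi^i_{k,n}|^2>x_n\Big)=1-(1-p_n)^{N_n}.
\]

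The first step is two-sided control of $1-(1-p_n)^{N_n}$ by $N_np_n$. The upper bound $1-(1-p_n)^{N_n}\le N_np_n$ is the union bound. For the lower bound I use $1-(1-p)^N\ge Np-\tbinom N2p^2\ge Np\,(1-\tfrac12 Np)$. With the choice $a_n=(\mathfrak c n2^{-(n+1)})^{1/2}$ from the proof of Theorem~\ref{LCCthrm} and $\mathfrak c>2C_1\ln 2$, one checks that $N_np_n\to0$. Hence the ratio of the probability to $N_np_n$ tends to $1$, which is the meaning of $\simeq$ I will establish.

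The second step is Mills' ratio. The Gaussian tail satisfies
\[
\frac{s}{1+s^2}\sqrt{\tfrac{2}{\pi}}\,e^{-s^2/2}\le \P(|\xi|>s)\le \frac1s\sqrt{\tfrac{2}{\pi}}\,e^{-s^2/2}.
\]
Since $s_n\to\infty$ (because $s_n^2=\mathfrak c n/C_1$), this gives $p_n\sim\sqrt{2/\pi}\,s_n^{-1}e^{-s_n^2/2}$. Multiplying by $N_n=2^{n-1}$ yields
\[
N_np_n\sim 2^{n-1}\sqrt{\tfrac2\pi}\,\frac{\sqrt{C_1}}{2^{(n+1)/2}a_n}\,e^{-s_n^2/2}=\sqrt{\frac{C_1}{\pi}}\,2^{n/2-1}a_n^{-1}\,e^{-s_n^2/2}.
\]
This reproduces exactly the prefactor $\sqrt{C_1/\pi}\,2^{n/2-1}a_n^{-1}$ in the claim.

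The main obstacle is the exponent. Mills' ratio produces $e^{-x_n/2}$, whereas the claim displays $e^{-x_n}$. I will first re-check the normalisation upstream. In the chain of inequalities in the proof of Theorem~\ref{LCCthrm}, the threshold passed to the claim comes from $\sup_t|S_{k,n}(t)|^2=2^{-(n+1)}$. The exponent in the claim is consistent with the stated one only if that squared height is $2^{-(n+2)}$, i.e.\ $|S_{k,n}|_\infty=2^{-(n+2)/2}$, which is what the normalised Haar functions $\pm2^{(n-1)/2}$ on intervals of length $2^{-n}$ actually give. I will therefore verify $|S_{k,n}|_\infty$ directly from the definition of $h_{k,n}$: with this value the effective threshold doubles, $s_n^2/2$ becomes $x_n$, and the displayed formula (prefactor and exponent) follows verbatim from the two steps above.

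If this re-check fails, the fallback is to state the claim as comparability of the tails up to this factor in the exponent. That version is all that is needed for the summability in \eqref{FC}$_1$, after enlarging $\mathfrak c$ accordingly.
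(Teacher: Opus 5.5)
Your two analytic steps are sound and are essentially the paper's own argument, made rigorous. The paper also writes the probability as $1-F_{\chi^2_1}(x_n)^{2^{n-1}}$, replaces it by the union bound $2^{n-1}\bigl(1-F_{\chi^2_1}(x_n)\bigr)$, and inserts $1-F_{\chi^2_1}(x)\sim\sqrt{2/(\pi x)}\,e^{-x/2}$. Your Bonferroni lower bound and Mills' inequality turn its two heuristic approximations into a genuine asymptotic equivalence.

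The gap is in your plan to reconcile the exponent. The normalisation re-check will fail. With $h_{k,n}=\pm 2^{(n-1)/2}$ on two adjacent intervals of length $2^{-n}$, the Schauder function peaks at $t=k2^{-n}$ with value $2^{(n-1)/2}\cdot 2^{-n}=2^{-(n+1)/2}$, exactly as in property (1) of the paper. So the squared height is $2^{-(n+1)}$, not $2^{-(n+2)}$. More fundamentally, nothing upstream can bear on the claim. It is a self-contained statement about $2^{n-1}$ i.i.d.\ $\mathcal N(0,1)$ variables and the explicit threshold $x_n=2^{n+1}a_n^2/C_1$. A different normalisation would only change the threshold on the left-hand side, so you would be proving a different statement. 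Even that version would not come out verbatim: with threshold $2x_n$ the prefactor picks up an extra factor $2^{-1/2}$.

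What your steps actually prove is $\P\bigl(\sup_k|\xi^i_{k,n}|^2>x_n\bigr)\sim\sqrt{C_1/\pi}\,2^{n/2-1}a_n^{-1}e^{-x_n/2}$. For the paper's choice $a_n^2=\mathfrak c n2^{-(n+1)}$ one has $x_n=\mathfrak c n/C_1\to\infty$, so the left-hand side asymptotically exceeds the displayed right-hand side by the factor $e^{x_n/2}\to\infty$. The claim as written is therefore false, not even true up to constants. It also errs in the direction that matters, since it is used as an upper bound in the proof of Theorem~\ref{LCCthrm}. The paper's proof makes precisely this slip: it uses the tail $\sqrt{2/(\pi x)}\,e^{-x/2}$, gets the prefactor right, and then writes $e^{-x}$ in the final line.

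So your fallback should be the conclusion, not a contingency: the correct statement has $e^{-2^n a_n^2/C_1}$ in place of $e^{-2^{n+1}a_n^2/C_1}$. For the Borel--Cantelli step only the upper bound is needed, and it holds for every $n$ without asymptotics, by the union bound and Mills' inequality: $\P(\cdot)\le 2^{n-1}\sqrt{2/\pi}\,s_n^{-1}e^{-s_n^2/2}=\sqrt{C_1/\pi}\,2^{n/2-1}a_n^{-1}e^{-2^na_n^2/C_1}=\sqrt{C_1/(2\pi\mathfrak c)}\,n^{-1/2}\bigl(2e^{-\mathfrak c/(2C_1)}\bigr)^n$. This is summable exactly when $\mathfrak c>2C_1\ln 2$, which is already the paper's hypothesis, so no enlargement of $\mathfrak c$ is needed.
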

\begin{proof}
We know that $\xi_{k,n}^i\in \mathcal{N}(0,1)$, thus the square of a standard normal random variable $|\xi_{k,n}^i|^2$ follows a $\chi_1^2$ distribution whose CDF is given by 
\begin{align*}
	F_{\chi_1^2}(x) =\P\Big(|\xi_{k,n}^i|^2\leq x\Big)=\frac{1}{\sqrt{2\pi}}\int_0^xt^{-\frac{1}{2}} e^{-t/2}\,dt. 
\end{align*}Thus, 
\begin{align*}
	\P\Big(|\xi_{k,n}^i|^2>x\Big)=1-	F_{\chi_1^2}(x) .
\end{align*}
Using the union bound, we can approximate the probability of the supremum exceeding $\frac{2^{(n+1)}a_n^2}{C_1}$. For any permissible $k$, the probability that $ |\xi_{k,n}^i|^2$ exceeds $ \frac{2^{(n+1)}a_n^2}{C_1}$ is
\begin{align*}
	\P\Big(|\xi_{k,n}^i|^2> \frac{2^{(n+1)}a_n^2}{C_1}\Big)=1-F_{\chi_1^2}\Big(\frac{2^{(n+1)}a_n^2}{C_1}\Big).
\end{align*}Note that $\xi_{k,n}^i$ are independent, the probability that none of $|\xi_{k,n}^i|^2$ exceeds $\frac{2^{(n+1)}a_n^2}{C_1}$ is 
\begin{align*}
		\P\Big(\sup_{k\in[0,2^n],k\text{ odd}}|\xi_{k,n}^i|^2\leq \frac{2^{(n+1)}a_n^2}{C_1}\Big)=\prod_{k\in[0,2^n],k\text{ odd}}\P\Big(|\xi_{k,n}^i|^2\leq \frac{2^{(n+1)}a_n^2}{C_1}\Big).
\end{align*}Using the independence, we get 
\begin{align*}
	\P\Big(\sup_{k\in[0,2^n],k\text{ odd}}|\xi_{k,n}^i|^2\leq \frac{2^{(n+1)}a_n^2}{C_1}\Big)=
	\Big[F_{\chi_1^2}\Big(\frac{2^{(n+1)}a_n^2}{C_1}\Big)\Big]^{2^{n-1}},
\end{align*}since $k$ is odd, there are $2^{n-1}$ odd indices in the range $[0,2^n]$. Therefore,
\begin{align*}
	\P\Big(\sup_{k\in[0,2^n],k\text{ odd}}|\xi_{k,n}^i|^2> \frac{2^{(n+1)}a_n^2}{C_1}\Big)=1-
	\Big[F_{\chi_1^2}\Big(\frac{2^{(n+1)}a_n^2}{C_1}\Big)\Big]^{2^{n-1}}.
\end{align*}Note that, for large $n$, the term $2^{n-1}$ grows exponentially. If $F_{\chi_1^2}\Big(\frac{2^{(n+1)}a_n^2}{C_1}\Big)$ is close to $1$, we can approximate the complementary probability using the union bound, that is, 
 \begin{align*}
 	\P\Big(\sup_{k\in[0,2^n],k\text{ odd}}|\xi_{k,n}^i|^2>  \frac{2^{(n+1)}a_n^2}{C_1}\Big)&\eqsim 
 	2^{n-1}	\P\bigg(|\xi_{k,n}^i|^2> \frac{2^{(n+1)}a_n^2}{C_1}\bigg) 	\\&= 2^{n-1}\bigg[1-F_{\chi_1^2}\Big(\frac{2^{(n+1)}a_n^2}{C_1}\Big)\bigg]. 
 \end{align*}We know that, for large $x$, the tail of $\chi_1^2$ distribution can be approximated as 
\begin{align*}
	1-F_{\chi_1^2}(x)\sim \sqrt{\frac{2}{\pi x}}e^{-x/2}.
\end{align*}Using the above fact, we obtain 
 \begin{align*}
	\P\Big(\sup_{k\in[0,2^n],k\text{ odd}}|\xi_{k,n}^i|^2>  \frac{2^{(n+1)}a_n^2}{C_1}\Big)&\eqsim 
	\frac{2^{n-1}}{\sqrt{ \frac{\pi 2^{(n+1)}a_n^2}{2C_1}}}e^{-\frac{2^{(n+1)}a_n^2}{C_1}}\\&=
	\sqrt{\frac{C_1}{\pi} }2^{n/2-1}a_n^{-1}e^{-\frac{2^{(n+1)}a_n^2}{C_1}},
\end{align*}which completes the proof.
\end{proof}
\end{appendix}

\noindent
\textbf{Acknowledgement.} E.H. gratefully acknowledges the support of the Austrian Science Foundation, Project
number: P34681. A.K. acknowledges partial funding by the Austrian Science Fund (FWF), Project
Number: P34681 (\href{https://www.fwf.ac.at/en/research-radar/10.55776/P34681}{10.55776/P34681}), P32295 (\href{https://www.fwf.ac.at/en/research-radar/10.55776/P32295}{10.55776/P32295}) and \href{https://www.fwf.ac.at/forschungsradar/10.55776/ESP4373225}{10.55776/ESP4373225}.
The research of J.M.T. was partially supported by the European Union's Horizon Europe research and innovation programme under the Marie Sk\l{}odowska-Curie Actions Staff Exchanges (Grant agreement no.~101183168 -- LiBERA, Call: HORIZON-MSCA-2023-SE-01). The authors express
their gratitude to Dr. Max Sauerbrey (Max-Planck-Institute f\"ur Mathematik in den Naturwissenschaften, Germany), for the insight that inspired the manuscript. The
authors would like to express their sincere gratitude to the reviewer for his/her
insightful comments and suggestions.

\noindent
\textbf{Disclaimer.}
Funded by the European Union. Views and opinions expressed are however those of the authors only and do not necessarily reflect those of
the European Union or the European Education and Culture Executive Agency (EACEA). Neither the European Union nor EACEA can be held responsible for them.

\noindent
\textbf{Open access:} For the purpose of open access, the author(s) has applied a Creative Commons Attribution (CC BY) public copyright license to any Author Accepted Manuscript version arising from this submission.

\medskip\noindent	\textbf{Declarations:}

\noindent 	\textbf{Ethical Approval:}   Not applicable.

\noindent  \textbf{Conflict of interest statement: } On behalf of all authors, the corresponding author states that there is no conflict of interest.

\noindent 	\textbf{Authors' contributions: } All authors have contributed equally.

\noindent 	\textbf{Funding:}  Austrian Science Fund (FWF), Project Number: P34681 (\href{https://www.fwf.ac.at/en/research-radar/10.55776/P34681}{10.55776/P34681}) (E.H.).\\
Austrian Science Fund (FWF), Project Number: P34681 (\href{https://www.fwf.ac.at/en/research-radar/10.55776/P34681}{10.55776/P34681}), P32295 (\href{https://www.fwf.ac.at/en/research-radar/10.55776/P32295}{10.55776/P32295})  and \href{https://www.fwf.ac.at/forschungsradar/10.55776/ESP4373225}{10.55776/ESP4373225} (A.K.).\\
European Union's Horizon Europe research and innovation programme under the Marie Sk\l{}odowska-Curie Actions Staff Exchanges (Grant agreement no.~101183168 -- LiBERA, Call: HORIZON-MSCA-2023-SE-01) (J.M.T.).

\noindent 	\textbf{Availability of data and materials: } Not applicable.

\end{document}